\def\@begintheorem#1#2{\par\bgroup{\sc #1 \ #2. }  \it \\\ignorespace }
\def\@opargbegintheorem#1#2#3{\par\bgroup{\sc #1\ #2 \ (#3).}  \it  \ignorespace}
\def\@endtheorem{\egroup}
\protected\def\ignorethis#1\endignorethis{}
\let\endignorethis\relax
\theoremstyle{plain}
\newtheorem{theorem}{Theorem}[section]
\theoremstyle{definition}
\newtheorem{definition}[theorem]{Definition}
\theoremstyle{remark}
\newtheorem{remark}[theorem]{Remark}
\theoremstyle{plain}
\theoremstyle{plain}
\newtheorem{lemma}[theorem]{Lemma}
\theoremstyle{plain}
\newtheorem{corollary}[theorem]{Corollary}
\theoremstyle{plain}
\newtheorem{proposition}[theorem]{Proposition}
\numberwithin{equation}{section}
\newcommand{\rmi}{{i}}
\newcommand{\C}{{\mathbb C}}
\newcommand{\N}{{\mathbb N}}
\newcommand{\dom}{\mathrm{dom}\,}
\newcommand{\Z}{{\mathbb Z}}
\newcommand{\R}{{\mathbb R}}
\newcommand{\bI}{\mathbb{I}}
\newcommand{\bV}{\mathbb{V}}
\newcommand{\bT}{\mathbb{T}}
\newcommand{\bU}{\mathbb{U}}
\newcommand{\cD}{{\mathcal{D}}}
\newcommand{\ccD}{{\mathscr{D}}}
\newcommand{\ccE}{{\mathscr{E}}}
\newcommand{\V}{\mathbb{V}}
\newcommand{\cT}{\mathcal{T}}
\newcommand{\bB}{{B}}
\newcommand{\supp}{{\rm supp}}
\DeclareMathOperator{\Realpart}{Re}
\renewcommand{\Re}{\Realpart}
\DeclareMathOperator{\diag}{diag}
\DeclarePairedDelimiter{\abs}{\lvert}{\rvert}
\DeclarePairedDelimiter{\norm}{\lVert}{\rVert}
\DeclareMathOperator{\arctanh}{arctanh}
\newcommand\eg{{\it e.g.}\,}
\newcommand\ie{{\it i.e.}\,}
\newcommand\cf{{\it cf.}\,}
\title[General $\delta$--shell interactions for the two-dimensional Dirac operator]{General \boldmath{$\delta$}--shell interactions for the
  two-dimensional Dirac operator: self-adjointness and approximation}
\author[B.~Cassano]{Biagio Cassano}
\address{B.~Cassano, Department of Mathematics, Universit\`a degli
  Studi di Bari ``A.~Moro'', via Orabona 4, 70125, Bari (Italy)}
\email{biagio.cassano@uniba.it}
\author[V.~Lotoreichik]{Vladimir Lotoreichik}
\address{V. Lotoreichik, Department of Theoretical Physics, Nuclear Physics Institute, 	Czech Academy of Sciences, 25068 \v Re\v z (Czechia)}
\email{lotoreichik@ujf.cas.cz}
\author[A.~Mas]{Albert Mas}
\address{A.~Mas,
Departament de Matem\`atiques,
Universitat Polit\`ecnica de Catalunya,
Campus Diagonal Bes\`os, Edifici A (EEBE), Av. Eduard Maristany 16, 08019
Barcelona (Spain)}
\email{albert.mas.blesa@upc.edu}
\author[M.~Tušek]{Matěj Tušek}
\address{M.~Tušek, Department of Mathematics, Faculty of Nuclear Sciences and Physical Engineering, Czech Technical University in Prague, Trojanova 13, 120 00, Prague (Czechia)}
\email{matej.tusek@fjfi.cvut.cz}
\begin{document}

\begin{abstract}
In this work we consider the two-dimensional Dirac operator with
general local singular interactions supported on a closed curve. A
systematic study of the interaction is performed by decomposing it
into a linear combination of four elementary interactions:
electrostatic, Lorentz scalar, magnetic, and a fourth one which can be
absorbed by using unitary transformations.
We address the self-adjointness and the spectral description of the
underlying Dirac operator, and moreover we describe its approximation by Dirac operators with regular potentials.\end{abstract}
\keywords{two-dimensional Dirac operator; magnetic, electrostatic, and Lorentz-scalar $\delta$-shell interactions; confinement; self-adjointness; boundary triples; spectral properties; approximation by regular potentials}
	\subjclass[2010]{35P05, 35Q40, 81Q10}
\maketitle
\setcounter{tocdepth}{1}
{
\hypersetup{linkcolor=black}
\tableofcontents
}

\let\thefootnote\relax\footnotetext[0]{\emph{Acknowledgments.}
  B.C.~is member of GNAMPA (INDAM) and he is supported by Fondo Sociale Europeo – Programma
  Operativo Nazio\-nale Ricerca e Innovazione 2014-2020,
  progetto PON: progetto AIM1892920-attivit\`a 2, linea 2.1.
 V.L.~and M.T.~are supported by the grant No.~21-07129S 
 	of the Czech Science Foundation (GA\v{C}R).
A.M.~is partially supported by MINECO grants 
MTM2017-84214-C2-1-P and MTM2017-83499 (Spain), AGAUR grant 2017-SGR-358 (Catalunya), and ERC-2014-ADG project HADE Id.\! 669689 (European Research Council). A.M.~is member of the Barcelona Graduate School of Mathematics, supported by MINECO grant MDM-2014-0445. M.T.~is supported by the project CZ.02.1.01/0.0/0.0/16\_019/0000778 from the European Regional Development Fund.}

\newpage
\section{Introduction}
In the present paper we study the two-dimensional Dirac operator with
a singular interaction supported on a closed curve.
Our main motivation is to treat the most general
local interactions. Besides electrostatic $\delta$-shell interactions and the Lorentz scalar  $\delta$-shell interactions we include 
into the analysis the magnetic $\delta$-shell interactions, which correspond to the magnetic field supported on a curve. The main two questions addressed in the present paper are self-adjointness of the underlying Dirac operator
and its approximation by Dirac operators with regular potentials. 

Recall that the Dirac operator was firstly introduced in relativistic quantum
mechanics to describe the dynamics of spin--$\frac12$ particles (see, \eg the monograph \cite{thaller}), and
was later associated to the evolution of quasi-particles in new
materials, such as the graphene (see, \eg \cite{neto2009electronic}). Dirac operators with singular interactions supported on sets of lower dimensions serve as idealized models for Dirac operators with more realistic (regular) potentials.

Hamiltonians with interactions supported on sets of zero Lebesgue measure have been studied intensively in the mathematical physics. At first the case of Schr\"odinger operators with singular interactions was investigated; see, \eg~\cite{albeverio2012solvable, BLL13, exner2007leaky}. In the recent years the focus partially shifted to the Dirac operators with singular interactions.
While for Schr\"odinger operators quadratic forms are a convenient tool to define the underlying Hamiltonian~\cite{BEKS94}, in the Dirac setting more subtle techniques are necessary due to the lack of semi-boundedness.
The case of one-dimensional Dirac operators with point-interactions is
well understood~\cite{albeverio2012solvable, CMP13, GS87,
  PR14, borrelli2019overview}. Three-dimensional Dirac operators with singular interactions
supported on surfaces are considered in, \eg \cite{amv1, amv2, amv3,
  behrndt2016spectral, behrndt2019dirac, behrndt2019self,
  behrndt2020limiting}. Finally, the two-dimensional case without the
magnetic interaction has recently been analysed
in~\cite{behrndt2019two,pizzichillo2019self}. The interest to include
the magnetic $\delta$-shell interaction stems from applications in
modern physics~\cite{PeNe_09, MaVaPe_09, FiJaTu_18}. The closely related model of magnetic links in three dimensions has been recently considered in~\cite{PSS18a, PSS18b, PSS20}.

The approximation of Dirac operators with singular interactions by Dirac operators with regular interactions provides a justification of the idealized model under consideration. In the one-dimensional setting 
the analysis is performed in~\cite{sebaklein, hughes97, hughes99,
  tusek19}, a generalization to three-dimensions has recently appeared
in \cite{klein, sphericalnote}. In the present manuscript, we modify to our setting some techniques that worked efficiently in the one-dimensional case.

Recall that the action of the two-dimensional free Dirac operator $\ccD_0$ is given by the differential expression 
\begin{equation}\label{eq:Dirac}
     D_0 := - i \sigma \cdot \nabla 
     + m \sigma_3
     =
     -i (\sigma_1 \partial_1 + \sigma_2 \partial_2)
     + m \sigma_3,
\end{equation}
where $m \in \R$ is the mass and  $\sigma_1,\,\sigma_2$, and $\sigma_3$  are the {Pauli matrices}:  
\begin{equation}\label{eq:Pauli}
 \quad{\sigma}_1 =
\begin{pmatrix}
0 & 1\\
1 & 0
\end{pmatrix},\quad {\sigma}_2=
\begin{pmatrix}
0 & -i\\
i & 0
\end{pmatrix},
\quad{\sigma}_3=
\begin{pmatrix}
1 & 0\\
0 & -1
\end{pmatrix}.
\end{equation}
It is self-adjoint on $\dom \ccD_0 := H^1(\R^2;\C^2)\subset L^2(\R^2;\C^2)$ and essentially
self-adjoint on $C_c^{\infty}(\R^2;\C^2)$.
The  spectrum of $\ccD_0$ is purely absolutely continuous  and
\begin{equation*}
  \sigma(\ccD_0)= \sigma_{ac}(\ccD_0) = (-\infty, -\abs{m}] \cup [\abs{m},+\infty).
\end{equation*}

The interaction under consideration will be supported on
the boundary $\Sigma := \partial\Omega$ of
a $C^\infty$-smooth 
bounded simply connected open set $\Omega \subset \R^2$.
The curve $\Sigma$ splits the Euclidean space into disjoint union
$\R^2 = \Omega_+ \cup \Sigma \cup \Omega_-$, where
$\Omega_+ := \Omega$ and $\Omega_-:=\R^2 \setminus \overline{\Omega_+}$.
We will call the curve $\Sigma$ a \emph{shell}.
Let us denote the outer unit normal to
$\Omega_+$ and the unit vector
tangent to the boundary $\Sigma$ in $x \in \Sigma$ by $\mathbf{n}\equiv(n_1,n_2)= \mathbf{n}(x)$ and $\mathbf{t}\equiv (t_1,t_2) =\mathbf{t}(x) $, respectively. For definiteness, we put $t_1=-n_2$ and $t_2 = n_1$.
For any $\C^2$--valued
function $f$ defined on $\R^2$, we set $f_\pm = f \upharpoonright
\Omega_\pm$. When it is defined in a suitable sense, we denote $\mathcal{T}_\pm^D f_\pm$ the Dirichlet trace of
$f_\pm$ at $\Sigma$, and we define the
distribution $\delta_\Sigma f$ by
\begin{equation*}
  \langle \delta_\Sigma f, \varphi \rangle
  :=
  \int_\Sigma \frac12 (\mathcal{T}_+^D f_+ + \mathcal{T}_-^D f_-)
  \cdot \varphi\, ds,
  \quad \text{ for all }\varphi \in C_c^{\infty}(\R^2;\C^2),
\end{equation*}
where $ds$ means integration with respect to the arc-length of $\Sigma$.

We are interested in the Dirac operator 
in  $L^2(\R^2;\C^2)$ given by the formal expression
\begin{equation}\label{eq:Dirac.shells}
  \begin{split}
    - i \sigma \cdot \big( \nabla  + i (\lambda \mathbf{t} + \omega \mathbf{n})
     \delta_\Sigma\big)
     + (\eta \bI_2 + \tau \sigma_3)\delta_\Sigma
     + m \sigma_3&
     \\
      = 
     \,  D_0
     + \big( \eta \bI_2 + \tau
     \sigma_3 + \lambda (\sigma\cdot \mathbf{t}) + \omega (\sigma\cdot \mathbf{n}) \big)\delta_\Sigma&
   \end{split}
 \end{equation}
where $\eta,\,\tau,\,\lambda,\,\omega$ are smooth real-valued functions,  
and where we used the notation
\begin{equation}\label{eq:notation}
\sigma \cdot A := \sigma_1A_1 + \sigma_2A_2
\quad \text{ for } A \in \C^2.
\end{equation}
For any given $x \in \Sigma$ the matrices $\bI_2,\, \sigma_3,\,
\sigma\cdot \mathbf{t}(x)$, and $\sigma\cdot\mathbf{n}(x)$ constitute a
basis of the Hermitian $2\times 2$ matrices, so at every point 
there is the
most general Hermitian matrix as a coefficient of $\delta_\Sigma$. 
The \emph{electrostatic $\delta$-shell interaction} $\eta \bI_2 \delta_\Sigma$ 
and the \emph{Lorentz scalar $\delta$-shell interaction} $\tau \sigma_3
\delta_\Sigma$ describe a distribution of charges and masses on the curve
$\Sigma$, respectively.
The novelty in our treatment is the \emph{magnetic $\delta$-shell interaction}
$\lambda(\sigma\cdot \mathbf{t}) \delta_\Sigma$, which describes a magnetic field supported on $\Sigma$.
We remark that the vector potential associated with the latter interaction is given by ${\bf A}_\Sigma = \lambda(t_1\delta_\Sigma,t_2\delta_\Sigma)$ and we will show in Appendix~\ref{app:magnetic_field} that the underlying magnetic field is given by $B_\Sigma=\lambda\partial_{\mathbf{n}}\delta_\Sigma$, where $\partial_{\mathbf{n}}\delta_\Sigma$ stands for the double layer distribution. 
Finally, we prove that, under some restrictions on parameters $\eta,\,\tau,\,\lambda,\,\omega$,   the interaction term
$\omega (\sigma\cdot
\mathbf{n}) \delta_\Sigma$ can be gauged
away in the spirit of~\cite{mas2017dirac,PANKRASHKIN2006207}; see \Cref{thm:w.gauged} for details. In particular, this term can be always gauged away when all parameters are constant. Due to this observation we may focus on the case when $\omega=0$ and other parameters are smooth real-valued functions.

The self-adjoint operator $ \ccD_{\eta,\tau,\lambda}$ associated with
the formal expression~\eqref{eq:Dirac.shells} with $\omega =0$ is constructed in Section~\ref{sec:core} rigorously as a self-adjoint extension of the symmetric operator with infinite deficiency indices acting in the Hilbert space $L^2(\R^2;\C^2)$ and given by
\[
	Sf = (-i\sigma\cdot\nabla + m\sigma_3)f,\qquad \dom S= H^1_0(\R^2\setminus\Sigma;\C^2).
\]
To this aim we build an ordinary boundary triple for $S^*$, which is a modification of the boundary triple constructed in~\cite{behrndt2019two}. This modification is necessary to treat the magnetic $\delta$-shell interaction. In this construction the operator
$\ccD_{\eta,\tau,\lambda}$ acts as $D_0$ on $\Omega_\pm$ and is subject to the local boundary conditions on $\Sigma$, which involves the parameters $\eta,\tau,\lambda$, the tangential vector ${\bf t}$,
and the normal vector ${\bf n}$.
The construction boils the question of self-adjointness of $ \ccD_{\eta,\tau,\lambda}$ down to self-adjointness of a certain first-order pseudo-differential operator on $\Sigma$.
The latter is shown by conventional techniques under the condition
\begin{equation}\label{eq:non-critical}
	\left(\frac{\eta^2-\tau^2-\lambda^2}{4}-1\right)^2-\lambda^2\ne 0 \quad \text{everywhere on } \Sigma.
\end{equation}
The case when the above expression on the left-hand side vanishes is called critical and needs a special treatment. In the present paper we cover in Section~\ref{sec:critical} the special sub-case of purely magnetic critical shell interaction ($\eta = \tau = 0$ and $\lambda = \pm 2$), in which case $\ccD_{0,0,\pm 2}$ is defined as a self-adjoint operator by a different and more direct method. It is also remarkable that the condition $\eta^2-\tau^2-\lambda^2 = -4$ is necessary and sufficient for the confinement to take place, where by confinement we understand that the operator $\ccD_{\eta,\tau,\lambda}$ can be decomposed into the orthogonal sum with respect to the decomposition $L^2(\R^2;\C^2) = L^2(\Omega_+;\C^2)\oplus L^2(\Omega_-;\C^2)$; {\it cf.} Section~\ref{ssec:confining} for details.
In particular, the choice $\eta = \tau = 0$ and $\lambda = \pm 2$ gives rise to \emph{zig-zag} boundary conditions.

Finally, we will find in Section~\ref{sec:approximation} approximations by regular potentials in the strong
resolvent sense for the Dirac operator with $\delta$-shell potentials
in the non-critical and non-confining case, \ie when $\eta^2-\tau^2-\lambda^2\ne-4$ everywhere on $\Sigma$ and~\eqref{eq:non-critical}  holds true.
To do so, we explicitly construct regular symmetric 
potentials $  \bV_{\eta,\tau,\lambda;\epsilon} \in
L^\infty(\R^2;\C^{2\times 2})$ supported on a
tubular $\epsilon$--neighbourhood
of $\Sigma$ and such that
\begin{equation*}
\bV_{\eta,\tau,\lambda;\epsilon} \xrightarrow[\epsilon \to 0]{}
(\eta\bI_2 + \tau \sigma_3 + \lambda(\sigma\cdot \mathbf{t}))
\delta_\Sigma
\quad \text{ in the sense of distributions},
\end{equation*}
and we investigate the strong resolvent limit of $\ccD_0 +
\bV_{\eta,\tau,\lambda;\epsilon}$ as $\epsilon\to 0$.
It turns out that $\ccD_0 +
\bV_{\eta,\tau,\lambda;\epsilon} \xrightarrow[\epsilon \to 0]{}
\ccD_{\hat{\eta,}\hat{\tau},\hat{\lambda}}$  for appropriate
$\hat{\eta,}\hat{\tau},\hat{\lambda} \in C^\infty(\Sigma;\R)$ that are in general different
from the starting $\eta,\tau,\lambda$, but are expressed  explicitly in terms of  them.
This phenomenon was observed firstly in the one-dimensional case \cite{sebaklein} and then in the three-dimensional setting \cite{klein}
--we say in this situation that a \emph{renormalization of the
  coupling constants} occurs.

We finish this introduction pointing out that when concluding the
preparation of this manuscript we learnt that the three dimensional
analogue of the magnetic $\delta$--shell interaction introduced here
was being considered in the non-published work \cite{benhellal2021}; the reader may see \Cref{sec:higher.dimensions} for more details.

\subsection*{Organization of the paper} 
In Section~\ref{sec:Main.results} we formulate and discuss all the main results of the present paper. Section~\ref{sec:preliminaries} contains preliminary material that is used throughout the paper. In Section~\ref{sec:gauge.w} we obtain spectral relations for the point spectrum of the Dirac operator with $\delta$-shell interactions under special transforms of the interaction strengths
and, moreover, we show how the fourth interaction $\omega(\sigma\cdot\mathbf{n})\delta_\Sigma$ can be eliminated by a properly constructed unitary transform. Further, in Section~\ref{sec:confinement} we provide a condition on the interaction strengths, which
gives the confinement. In Section~\ref{sec:core} we analyse the non-critical case, prove self-adjointness of the underlying Dirac operator and obtain its basic spectral properties.
Self-adjointness and spectral properties of the Dirac operator with purely magnetic critical interaction are investigated in Section~\ref{sec:critical}. In Section~\ref{sec:approximation} we construct strong resolvent approximations of Dirac operators with $\delta$-shell interactions by sequences of Dirac operators with suitably scaled regular potentials. Possible generalization for higher dimensions is briefly discussed in Section~\ref{sec:higher.dimensions}.

The paper is complemented by two appendices. In Appendix~\ref{sec:Appendix} we focus on exponentials of $2\times2$ matrices of a special structure. Finally, in Appendix~\ref{app:magnetic_field} we compute the magnetic field associated with the magnetic $\delta$-shell interaction.

\section{Main results}
\label{sec:Main.results}
We briefly discuss here the main results of this paper, referring to
the various sections below for more detailed results. For an open set
$\Omega \subset \R^2$, we define 
\begin{equation}\label{eq:defn.HsigmaOmega}
  H(\sigma,\Omega) :=
  \{f \in L^2(\Omega;\C^2) \mid
  \sigma \cdot \nabla
  f \in L^2(\Omega;\C^2)\}.
\end{equation}  
For any $f = f_+ \oplus f_- \in   H(\sigma,\Omega_+) \oplus
H(\sigma,\Omega_-) \subset
L^2(\Omega_+;\C^2) \oplus L^2(\Omega_-;\C^2)
\equiv L^2(\R^2;\C^2)$, 
it was shown in \cite{benguria2017self} that $f_\pm$ admit Dirichlet
traces $\mathcal{T}_\pm^D f_\pm$ in $H^{-\frac12}(\Sigma;\C^2)$, see
\Cref{sec:trace} for details.

Given $\eta,\,\tau,\,\lambda,\,\omega \in C^\infty(\Sigma;\R)$ we define
\begin{equation}\label{eq:defn.D.shell.domain_w}
  \begin{split}
    &\dom(\ccD_{\eta,\tau,\lambda,\omega}) := 
    \big\{  f = f_+ \oplus f_- \in H(\sigma,\Omega_+) \oplus
    H(\sigma,\Omega_-)\mid \\
    &i (\sigma\cdot \mathbf{n})(\mathcal{T}_-^D f_- -\mathcal{T}_+^D  f_+) 
     =
    \tfrac12 (\eta \bI_2 + \tau \sigma_3 + \lambda (\sigma\cdot
    \mathbf{t})
    + \omega (\sigma\cdot\mathbf{n}))
    (\mathcal{T}_-^D f_- + \mathcal{T}_+^D
    f_+)
    \big\},
\end{split}
\end{equation}
and 
\begin{equation}\label{eq:defn.D.shell_w}
  \ccD_{\eta,\tau,\lambda,\omega} f
  := D_0 f_+
  \, \oplus \,
  D_0 f_-,
  \quad
  \text{for all }f \in \dom(\ccD_{\eta,\tau,\lambda,\omega}).
\end{equation}
In \eqref{eq:defn.D.shell.domain_w} the condition on $\mathcal{T}_\pm^D f_\pm$ is understood in $H^{-\frac12}(\Sigma;\C^2)$. By means of an integration by parts it can be seen (see
\eg~\cite{ourmieres2019dirac}) that
$\ccD_{\eta,\tau,\lambda,\omega}$ is the operator representing the formal differential expression \eqref{eq:Dirac.shells}.
    
Since most of this article focuses on the case 
$\omega=0$, due to the results presented in \Cref{sec:gauge.w}, for the sake of brevity we also set
$\ccD_{\eta,\tau,\lambda}:=\ccD_{\eta,\tau,\lambda,0}$, \ie
\begin{equation}\label{eq:defn.D.shell.domain}
  \begin{split}
    &\dom(\ccD_{\eta,\tau,\lambda}) := 
    \big\{  f = f_+ \oplus f_- \in H(\sigma,\Omega_+) \oplus
    H(\sigma,\Omega_-)\mid \\
    &i (\sigma\cdot \mathbf{n})(\mathcal{T}_-^D f_- -\mathcal{T}_+^D  f_+) 
     =
    \tfrac12 (\eta \bI_2 + \tau \sigma_3 + \lambda (\sigma\cdot
    \mathbf{t}))
    (\mathcal{T}_-^D f_- + \mathcal{T}_+^D
    f_+)
    \big\},
\end{split}
\end{equation}
and
\begin{equation}\label{eq:defn.D.shell}
  \ccD_{\eta,\tau,\lambda} f
  := D_0 f_+
  \, \oplus \,
  D_0 f_-,
  \quad
  \text{for all }f \in \dom(\ccD_{\eta,\tau,\lambda}).
\end{equation}

Finally, we denote
\begin{equation}\label{eq:defn.d}
  d:= \eta^2 - \tau^2 - \lambda^2\in C^\infty(\Sigma;\R).
\end{equation}

\subsection{Reduction to $\omega=0$}
In \Cref{sec:gauge.w} we will prove the following result.

\begin{theorem}\label{thm:w.gauged}
Given $\omega\in\R$ and $\eta,\,\tau,\,\lambda \in C^\infty(\Sigma;\R)$ such that 
$d:= \eta^2 - \tau^2 - \lambda^2$ is a constant function on $\Sigma$, let 
$X$ be a solution to 
\begin{equation}\label{eq:solX}
dX^2-4+(4+\omega^2-d)X=0,
\end{equation}
and 
\begin{equation}\label{eq:def.z}
z:=\frac{dX^2+4}{X(4+d-\omega^2+4\omega i)}.
\end{equation}
Then, $X\in\R\setminus\{0\}$,  $z\in\C$ satisfies $|z|=1$, and $\ccD_{\eta,\tau,\lambda,\omega}=U_z\ccD_{X\eta,X\tau,X\lambda,0}U_{\overline{z}}$, where 
$$ U_z \varphi:=\chi_{\Omega_+}\varphi+z\chi_{\Omega_-}\varphi\quad(\text{for all }\varphi\in L^2(\R^2;\C^2))$$
is unitary in $L^2(\R^2;\C^2)$. 
\end{theorem}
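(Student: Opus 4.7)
My plan has four conceptual steps: (i) show $X \in \R \setminus \{0\}$; (ii) compute $|z|^2 = 1$, so that $U_z$ is unitary on $L^2(\R^2;\C^2)$ with $U_z^{-1} = U_{\bar z}$; (iii) observe that $\ccD_0$ commutes with the pointwise scalar multiplication $U_z$ on each of $\Omega_\pm$; (iv) verify that the transmission condition of $\ccD_{\eta,\tau,\lambda,\omega}$ corresponds, via the trace transformation $\mathcal{T}_+^D f_+ = \phi_+$, $\mathcal{T}_-^D f_- = z\phi_-$ where $\phi_\pm := \mathcal{T}_\pm^D g_\pm$ and $f = U_z g$, to the transmission condition of $\ccD_{X\eta,X\tau,X\lambda,0}$. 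Step (iv) is the crux of the argument.

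For (i), the real quadratic \eqref{eq:solX} has discriminant $(4+\omega^2-d)^2 + 16d = (\omega^2-d-4)^2 + 16\omega^2 \geq 0$, so a real solution $X$ exists, and $X=0$ is ruled out since \eqref{eq:solX} would then yield $-4 = 0$. For (ii), using $dX^2 + 4 = 8 + (d-4-\omega^2)X$ (obtained by rearranging \eqref{eq:solX}), the quantity $(dX^2+4)^2 - X^2\bigl[(4+d-\omega^2)^2 + 16\omega^2\bigr]$ simplifies after direct expansion to $-16\bigl[dX^2 + (4+\omega^2-d)X - 4\bigr]$, which vanishes by \eqref{eq:solX}; hence $|z|^2 = 1$. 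Step (iii) is immediate since $\ccD_0$ is a local differential operator and $U_z$ acts as a constant scalar on each $\Omega_\pm$.

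The heart is (iv), a pointwise matrix identity on $\Sigma$. Set $N := \sigma\cdot\mathbf{n}$, $T := \sigma\cdot\mathbf{t}$, and $M := \eta\bI_2 + \tau\sigma_3 + \lambda T$. After elementary rearrangements the transmission conditions \eqref{eq:defn.D.shell.domain_w} and \eqref{eq:defn.D.shell.domain} (for $f$ and $g$, respectively) become
\begin{equation*}
z\bigl((2i-\omega)N - M\bigr)\phi_- = \bigl((2i+\omega)N + M\bigr)\phi_+
\quad\text{and}\quad
(2iN - XM)\phi_- = (2iN + XM)\phi_+.
\end{equation*}
I would produce an invertible matrix $R = a\bI_2 + b\,MN$ on $\Sigma$ (with scalars $a,b$ to be determined) such that
\begin{equation*}
R(2iN - XM) = z\bigl((2i-\omega)N - M\bigr) \quad\text{and}\quad R(2iN + XM) = (2i+\omega)N + M.
\end{equation*}
Using $N^2 = \bI_2$, the anticommutation relations $NT + TN = 0$ and $N\sigma_3 + \sigma_3 N = 0$ together with $NT = i\sigma_3$, the identity $M^2 = 2\eta M - d\,\bI_2$, and the consequence $MNM = dN$, one computes $RN = aN + bM$ and $RM = aM + bdN$; matching coefficients of $N$ and $M$ in the two target equations produces four scalar equations in $a,b,X,z$. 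The consistency of this $4 \times 4$ system reduces to two relations that, modulo \eqref{eq:solX}, both recover \eqref{eq:def.z}. Invertibility of $R$ is captured by $\det R = a^2 - b^2 d$, which is nonzero by $|z| = 1$ and \eqref{eq:solX}. With $R$ in hand, left-multiplication by $R$ (respectively by $R^{-1}$) converts the second transmission condition into the first (respectively vice versa), completing the proof of the operator identity.

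The main obstacle is the non-commutative matrix computation in step (iv). The explicit formula \eqref{eq:def.z} for $z$ is tuned so that the Clifford-algebra identities $M^2 = 2\eta M - d\,\bI_2$ and $MNM = dN$ interlock with the quadratic \eqref{eq:solX}; any other choice of $z$ would leave the four scalar equations inconsistent, and the gauge transformation would fail.
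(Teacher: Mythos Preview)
Your proposal is correct and follows essentially the same strategy as the paper. The paper's key step (Lemma~\ref{lemma:key.gauge}) is the matrix identity $(M^{+}_{\eta,\tau,\lambda,\omega})^{-1}M^{-}_{\eta,\tau,\lambda,\omega}(M^{-}_{X\eta,X\tau,X\lambda,0})^{-1}M^{+}_{X\eta,X\tau,X\lambda,0}=\bar z\,\bI_2$, proved by explicitly inverting the $M^{\pm}$ via the same Clifford relations you use; your intertwiner is nothing but $R=M^{+}_{\eta,\tau,\lambda,\omega}(M^{+}_{X\eta,X\tau,X\lambda,0})^{-1}$, so the two computations are the same object viewed from different ends.
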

  
Roughly speaking, \Cref{thm:w.gauged} implies that a spectral study for
$\ccD_{\eta,\tau,\lambda} =\ccD_{\eta,\tau,\lambda,0}$  suffices to treat the general case $\ccD_{\eta,\tau,\lambda,\omega}$, hence the formal term 
$\omega (\sigma\cdot\mathbf{n})\delta_\Sigma$ in the $\delta$--shell
interaction is indeed superfluous. In a classical (absolutely
continuous) framework one would say that this term can be gauged
away: this is reminiscent of a similar effect for 
 magnetic potentials in the Coulomb gauge, 
see \cite[Remark 1.5]{cassano2018self},  \cite{cassano2019boundary,boussaid2011virial,fanelli2009magnetic,fanelli2009non}.
In \Cref{sec:gauge.w} we show that the unitary transform $U_z$ can always be taken different from the identity except for the case $(d,\omega)=(-4,0)$, which corresponds to confining $\delta$--shell interactions, see \Cref{sec:confinement}. In particular, $\ccD_{\eta,\tau,\lambda,\omega}$ never yields confinement if $(d,\omega)\neq(-4,0)$.

At the end of \Cref{sec:gauge.w} we find some
isospectral transformations as a byproduct of our result, and we
describe the charge conjugation properties of the operator $\ccD_{\eta,\tau,\lambda}$.

\subsection{The non-critical case}
We say that we are in the \emph{non-critical} case when
\eqref{eq:non-critical} holds true, \ie
everywhere on $\Sigma$
\begin{equation}\label{eq:non-critical.condition}
  \mathfrak{C}(\eta,\tau,\lambda)(x) := \left( \frac{d}{4}-1 \right)^2 -\lambda^2
= \left( \frac{\eta^2 - \tau^2 - \lambda^2}{4}-1 \right)^2 -\lambda^2
  \neq 0. 
\end{equation}
In the following theorem we gather the properties of
$\ccD_{\eta,\tau,\lambda}$ in the case when
\eqref{eq:non-critical.condition} holds true.
We point out that the non-magnetic case ($\lambda = 0$) has been
already treated in \cite[Theorem 1.1]{behrndt2019two}
for constant $\eta,\tau\in \R$.
\begin{theorem}\label{thm:D.shell.introduction}
  Let $\eta,\tau,\lambda \in C^\infty(\Sigma;\R)$ and let $\ccD_{\eta,\tau,\lambda}$ be defined as in
  \eqref{eq:defn.D.shell.domain} and \eqref{eq:defn.D.shell}. Moreover, let either $d(x)\neq 0$ for all $x\in \Sigma$,
  or let $\eta,\tau,\lambda $ be constant and such that $d = 0$.
  If \eqref{eq:non-critical.condition} holds true
  then $\ccD_{\eta,\tau,\lambda}$ is self-adjoint in $L^2(\R^2;\C^2)$
  with $\dom \ccD_{\eta,\tau,\lambda} \subset H^1(\R^2 \setminus
  \Sigma;\C^2)$. The essential spectrum of  $\ccD_{\eta,\tau,\lambda}$ is 
  \begin{equation*}
    \sigma_{ess}(\ccD_{\eta,\tau,\lambda}) =
    (-\infty, -\abs{m}] \cup [\abs{m}, +\infty)
  \end{equation*}
  and its discrete spectrum is finite.
\end{theorem}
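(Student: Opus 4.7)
The strategy is to realize $\ccD_{\eta,\tau,\lambda}$ as an extension of the symmetric operator $S$ (defined on $H^1_0(\R^2\setminus\Sigma;\C^2)$) parametrized by an ordinary boundary triple for $S^*$, and then read off self-adjointness, regularity, and the spectral information from properties of an auxiliary operator on $\Sigma$. First I would fix a boundary triple $(L^2(\Sigma;\C^2),\Gamma_0,\Gamma_1)$ for $S^*$ tailored to the full interaction $\eta\bI_2+\tau\sigma_3+\lambda(\sigma\cdot\mathbf{t})$, modifying the one used in~\cite{behrndt2019two} so that the matrix coefficient in front of $(\sigma\cdot\mathbf{t})$ is correctly absorbed into the trace maps. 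The natural choice is to take $\Gamma_0 f$ as a weighted jump $i(\sigma\cdot\mathbf{n})(\mathcal{T}_-^D f_- -\mathcal{T}_+^D f_+)$ minus a suitable linear combination involving $(\sigma\cdot\mathbf{t})$, and $\Gamma_1 f$ as a weighted average of the traces, chosen so that the defining relation in \eqref{eq:defn.D.shell.domain} reads $\Gamma_1 f = \Theta\,\Gamma_0 f$ for a concrete multiplication operator $\Theta$ on $\Sigma$ built from $\eta,\tau,\lambda$ and $\mathbf{t},\mathbf{n}$.

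With the triple set up, the associated Weyl function $M(z)$ at $z\in\rho(\ccD_0)$ is a first-order pseudo-differential operator on $\Sigma$ of Calder\'on type, whose principal symbol is computable from the boundary values of the resolvent kernel of $\ccD_0$ (the Cauchy/Green operator for $D_0$ on $\Sigma$ studied in~\cite{behrndt2019two, pizzichillo2019self}). The abstract boundary-triple theory then says that $\ccD_{\eta,\tau,\lambda}$ is self-adjoint iff the operator $\Theta-M(z)$ is self-adjoint on $L^2(\Sigma;\C^2)$ for some (hence any) non-real $z$; moreover $\dom\ccD_{\eta,\tau,\lambda}\subset H^1(\R^2\setminus\Sigma;\C^2)$ as soon as the same operator is Fredholm with domain $H^1(\Sigma;\C^2)$. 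The key computation is that the principal symbol of $\Theta-M(z)$ is, up to unitary conjugation on the fibers, a $2\times 2$ matrix with determinant proportional to
\[
\Big(\tfrac{d}{4}-1\Big)^{\!2}-\lambda^2 \;=\; \mathfrak{C}(\eta,\tau,\lambda),
\]
so the non-critical hypothesis \eqref{eq:non-critical.condition} is exactly the ellipticity of $\Theta-M(z)$ on $\Sigma$. Under this hypothesis, standard pseudo-differential calculus on the compact manifold $\Sigma$ gives that $\Theta-M(z)\colon H^1(\Sigma;\C^2)\to L^2(\Sigma;\C^2)$ is Fredholm of index $0$, and symmetry of $\Theta$ together with self-adjointness of $\Re M(z_0)$ for real $z_0\in\rho(\ccD_0)\cap\R$ yields self-adjointness of the extension in $L^2(\R^2;\C^2)$, plus the $H^1$-regularity of its domain. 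The sub-case $d\equiv 0$ with constant coefficients requires an additional argument because $\Theta$ then degenerates; here I would follow the trick of~\cite{behrndt2019two} of applying a constant unitary on $\C^2$ to diagonalize the coefficient and reduce to the generic situation.

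Once self-adjointness is established, the Krein-type resolvent formula furnished by the boundary triple gives
\[
(\ccD_{\eta,\tau,\lambda}-z)^{-1}-(\ccD_0-z)^{-1}=-\gamma(z)\bigl(\Theta-M(z)\bigr)^{-1}\gamma(\bar z)^*,
\]
with $\gamma(z)$ the $\gamma$-field. Since $\Sigma$ is a compact $C^\infty$ curve, the maps $\gamma(z)\colon L^2(\Sigma;\C^2)\to L^2(\R^2;\C^2)$ are compact (they factor through a Sobolev space on $\Sigma$ with compact embedding), and $(\Theta-M(z))^{-1}$ is bounded by ellipticity; hence the resolvent difference is compact. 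Weyl's theorem then yields $\sigma_{ess}(\ccD_{\eta,\tau,\lambda})=\sigma_{ess}(\ccD_0)=(-\infty,-|m|]\cup[|m|,+\infty)$. For the finiteness of the discrete spectrum I would employ a Birman--Schwinger-type argument in the gap $(-|m|,|m|)$: an eigenvalue $\mu$ there corresponds to a non-trivial kernel of $\Theta-M(\mu)$, and the family $\mu\mapsto\Theta-M(\mu)$ is a holomorphic (on the gap) family of elliptic pseudo-differential operators of order one on $\Sigma$. By analytic Fredholm theory the set of $\mu$'s for which this operator has non-trivial kernel is discrete with no accumulation in the gap, and since the essential spectrum is bounded away from the gap, only finitely many such eigenvalues can occur, each of finite multiplicity.

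The main obstacle is the computation of the principal symbol of $\Theta-M(z)$ and the identification of its determinant with $\mathfrak{C}(\eta,\tau,\lambda)$: this requires a careful book-keeping of how the jump/average trace operators interact with the principal symbol of the Cauchy operator for $D_0$ (which, on $\Sigma$, is tied to $(\sigma\cdot\mathbf{n})$ and $(\sigma\cdot\mathbf{t})$), and how the magnetic term $\lambda(\sigma\cdot\mathbf{t})$ adds a genuinely new off-diagonal contribution absent in~\cite{behrndt2019two}. The algebraic identities recalled in Appendix~\ref{sec:Appendix} (on exponentials of $2\times 2$ matrices built from $\bI_2,\sigma_3,\sigma\cdot\mathbf{t},\sigma\cdot\mathbf{n}$) are exactly what is needed to carry out this symbolic computation cleanly.
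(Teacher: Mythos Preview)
Your overall strategy---parametrize $\ccD_{\eta,\tau,\lambda}$ via an ordinary boundary triple for $S^*$, identify the non-critical condition \eqref{eq:non-critical.condition} as ellipticity of the boundary operator, and deduce self-adjointness and $H^1$-regularity---is exactly the one used in the paper (\Cref{sec:core}). A few points need correction, however.

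First, the self-adjointness criterion is misstated. In the abstract framework (\Cref{thm:boundary.triple.abstract}) one has $\ccD_{\eta,\tau,\lambda}=B_{\Pi,\Theta}$ self-adjoint iff \emph{$\Theta$ itself} is self-adjoint in $L^2(\Sigma;\C^2)$, not $\Theta-M(z)$; for non-real $z$ the Weyl function satisfies $M(z)^*=M(\bar z)\neq M(z)$, so $\Theta-M(z)$ is never self-adjoint. The paper accordingly shows directly (\Cref{lem:dom_Theta}) that the maximal realization $\Theta$ of the explicit first-order pseudodifferential operator $\theta$ has $\dom\Theta=H^1(\Sigma;\C^2)$ and is self-adjoint; the determinant computation yielding $\mathfrak{C}(\eta,\tau,\lambda)$ appears there (equation~\eqref{eq:condition.sa}), applied to $\Theta$ rather than to $\Theta-M(z)$. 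Since $M_z$ is of order~$0$ (the $z$-dependent part of $\mathcal{C}_z$ in~\eqref{Lambda_C_z_Lambda} is smoothing to order $-1$), the principal parts coincide, so your heuristic is not far off, but the logical structure is different.

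Second, your argument for \emph{finiteness} of the discrete spectrum has a gap. Analytic Fredholm theory on $\mu\mapsto\Theta-M(\mu)$ gives that the eigenvalues in $(-|m|,|m|)$ are isolated and of finite multiplicity, with no accumulation point \emph{inside} the open gap---but it does not by itself exclude accumulation at the endpoints $\pm|m|$, and your assertion that ``the essential spectrum is bounded away from the gap'' is false (the essential spectrum is exactly $(-\infty,-|m|]\cup[|m|,+\infty)$). The paper avoids this difficulty by a different route: once $\dom\ccD_{\eta,\tau,\lambda}\subset H^1(\R^2\setminus\Sigma;\C^2)$ is established, \Cref{prop:extensions.spectral.properties}\,\ref{item:extensions.spectral.properties.2} gives directly that the spectrum in $(-|m|,|m|)$ is discrete \emph{and finite}. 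For the essential spectrum, the paper likewise relies on \Cref{prop:extensions.spectral.properties}\,\ref{item:extensions.spectral.properties.1} rather than on a compact resolvent difference (though your Weyl-theorem approach would also work for that part).

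Finally, the reference to Appendix~\ref{sec:Appendix} is misplaced: those matrix-exponential identities are used in the approximation argument of \Cref{sec:approximation}, not in the symbolic computation for self-adjointness. The relevant algebraic input here is just the Pauli identities~\eqref{eq:Pauli.squares}--\eqref{eq:sigma_n.t} and the structure of $\mathcal{C}_z$ in~\eqref{Lambda_C_z_Lambda}.
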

The proof of \Cref{thm:D.shell.introduction} mimics the strategy of
the proof of \cite[Theorem 1.1]{behrndt2019two}, taking into account
the necessary modifications to treat the additional interaction
$\lambda(\sigma\cdot \mathbf{t})\delta_\Sigma$. 
It is provided in \Cref{sec:core}, where we also show a Krein-type resolvent formula, an abstract
version of the Birman-Schwinger principle, and
obtain the spectral properties of $\ccD_{\eta,\tau,\lambda}$.

\subsection{Confining $\delta$--shell interactions}
\label{ssec:confining}
If  $d = -4$ everywhere on $\Sigma$, the phenomenon of \emph{confinement} arises.
Physically, this means that a particle initially located in
$\Omega_\pm$ can not escape this region during the quantum evolution associated with
$\ccD_{\eta,\tau,\lambda}$. We describe the corresponding Hamiltonian in the following theorem, which will be proved in Section~\ref{sec:confinement}.

\begin{theorem}\label{thm:confinement.introduction}  
  Let $\eta,\tau,\lambda \in C^\infty(\Sigma;\R)$ and let $\ccD_{\eta,\tau,\lambda}$ be defined as in
  \eqref{eq:defn.D.shell.domain}, \eqref{eq:defn.D.shell}.
  If $d = -4$ everywhere on $\Sigma$ then $\ccD_{\eta,\tau,\lambda}$ decouples in the direct sum
  \begin{equation*}
    \ccD_{\eta,\tau,\lambda} = \ccD_{\eta,\tau,\lambda}^+ \oplus \ccD_{\eta,\tau,\lambda}^-,
  \end{equation*}
  with
  \begin{equation}\label{eq:defn.D.shell.domain.confinement}
    \begin{split}
      &\dom \ccD_{\eta,\tau,\lambda}^\pm := 
      \big\{ f_\pm \in H(\sigma,\Omega_\pm) \mid
       \left[\pm i (\sigma \cdot \mathbf{n}) +
        \tfrac12 (\eta \bI_2 + \tau \sigma_3 + \lambda \, (\sigma \cdot \mathbf{t})) \right] 
      \mathcal{T}_\pm^D f_{\pm} = 0
      \big\}, \\
      & \ccD_{\eta,\tau,\lambda}^\pm f_\pm := D_0 f_\pm,
      \quad \text{ for all }f_\pm \in \dom \ccD_{\eta,\tau,\lambda}^\pm.
    \end{split}
  \end{equation}
\end{theorem}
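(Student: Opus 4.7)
The plan is to reduce the claim to a pointwise linear-algebra identity on $\Sigma$. Writing $M:=\eta\bI_2+\tau\sigma_3+\lambda(\sigma\cdot\mathbf{t})$ and $N:=i(\sigma\cdot\mathbf{n})$, I first rearrange the boundary condition in \eqref{eq:defn.D.shell.domain} as $P_+g_++P_-g_-=0$, where $P_\pm:=M\pm 2N$ and $g_\pm:=\mathcal{T}_\pm^D f_\pm$; since $M^*=M$ and $N^*=-N$, one has $P_-=P_+^*$. The condition defining $\dom\ccD_{\eta,\tau,\lambda}^\pm$ in \eqref{eq:defn.D.shell.domain.confinement} is exactly $P_\pm g_\pm=0$, so the theorem reduces to showing that, whenever $d=-4$, the single equation $P_+g_++P_-g_-=0$ is pointwise equivalent to the pair $P_+g_+=P_-g_-=0$. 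The operator-action part of the statement is then automatic, since $\ccD_{\eta,\tau,\lambda}$ acts componentwise as $D_0$ on each $\Omega_\pm$.

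A direct computation in a local orthonormal frame with $\mathbf{n}=(1,0)$, $\mathbf{t}=(0,1)$ (so $\sigma\cdot\mathbf{n}=\sigma_1$, $\sigma\cdot\mathbf{t}=\sigma_2$) gives $\det P_\pm=d+4$, so under $d=-4$ each $P_\pm(x)$ has rank at most one. Rank zero is ruled out: $P_+(x)=0$ forces $M(x)=-2N(x)$, impossible as the left-hand side is Hermitian, the right-hand side anti-Hermitian, with $|\mathbf{n}(x)|=1$ preventing the trivial solution. Hence $P_\pm(x)$ has rank exactly one. Because $P_-=P_+^*$, the ranges $\mathrm{range}\,P_+(x)$ and $\mathrm{range}\,P_-(x)$ (each a one-dimensional subspace of $\C^2$) coincide if and only if $P_+(x)$ is normal, equivalently $[P_+,P_-](x)=0$.

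The key step is therefore the commutator identity
\begin{equation*}
[P_+,P_-] = 4[N,M] = 4i\bigl(\tau[\sigma\cdot\mathbf{n},\sigma_3]+\lambda[\sigma\cdot\mathbf{n},\sigma\cdot\mathbf{t}]\bigr) = 8\tau(\sigma\cdot\mathbf{t})-8\lambda\sigma_3,
\end{equation*}
which follows from $\{\sigma_3,\sigma\cdot\mathbf{n}\}=\{\sigma\cdot\mathbf{n},\sigma\cdot\mathbf{t}\}=0$ together with $(\sigma\cdot\mathbf{n})(\sigma\cdot\mathbf{t})=i\sigma_3$. The resulting matrix vanishes at a point $x\in\Sigma$ only if $\tau(x)=\lambda(x)=0$, in which case $d(x)=\eta(x)^2\geq 0$ would contradict $d(x)=-4$. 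Consequently $\mathrm{range}\,P_+(x)$ and $\mathrm{range}\,P_-(x)$ are distinct one-dimensional subspaces of $\C^2$ at every $x\in\Sigma$, and in particular meet only at $0$.

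The equation $P_+g_++P_-g_-=0$ thus forces both summands to lie in $\mathrm{range}\,P_+(x)\cap\mathrm{range}\,P_-(x)=\{0\}$, giving the desired pointwise decoupling; at the distributional level in $H^{-1/2}(\Sigma;\C^2)$ the same conclusion is propagated by multiplication by the two smooth (since $\Sigma$ is $C^\infty$ and the ranges depend smoothly on the point) global projectors of $\C^2$ onto each range along the other. This yields $\dom\ccD_{\eta,\tau,\lambda}=\dom\ccD_{\eta,\tau,\lambda}^+\oplus\dom\ccD_{\eta,\tau,\lambda}^-$, and together with the componentwise action the direct-sum decomposition follows. The only step with genuine content is the commutator computation; everything else is reorganization.
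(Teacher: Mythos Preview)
Your proof is correct, but the paper takes a shorter algebraic route. The paper multiplies the boundary condition through by $-i(\sigma\cdot\mathbf{n})$ to rewrite it as $(\bI_2+M')g_+=(\bI_2-M')g_-$ with $M':=-\tfrac{i}{2}(\sigma\cdot\mathbf{n})(\eta\bI_2+\tau\sigma_3+\lambda(\sigma\cdot\mathbf{t}))$, observes that $(M')^2=-\tfrac{d}{4}\bI_2=\bI_2$ when $d=-4$, and then simply left-multiplies by $\bI_2\pm M'$: since $(\bI_2+M')(\bI_2-M')=0$ and $(\bI_2\pm M')^2=2(\bI_2\pm M')$, the two decoupled conditions drop out in one line. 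No rank analysis, no commutator, no smooth projectors are needed---the identity $(M')^2=\bI_2$ already furnishes the complementary idempotents $\tfrac12(\bI_2\pm M')$.

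Your argument recovers the same decomposition by a more geometric path: you compute $\det P_\pm=d+4$ to get rank one, then use the commutator $[P_+,P_-]=8\tau(\sigma\cdot\mathbf{t})-8\lambda\sigma_3$ to show $P_+$ is never normal under $d=-4$, hence $\mathrm{range}\,P_+$ and $\mathrm{range}\,P_-=\mathrm{range}\,P_+^*$ are transverse lines in $\C^2$. This is longer but yields the extra structural information that $P_+$ is non-normal (equivalently, that the two boundary projectors are oblique rather than orthogonal), and your handling of the $H^{-1/2}$ level via smooth oblique projectors is clean. The paper's approach buys brevity; yours buys a transparent picture of why the decoupling happens.
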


Let us look closer at $\ccD_{\eta,\tau,\lambda}^+$, the Dirac operator on
$\Omega_+$.
If $\eta = 0$, $d=-4$ implies that
there exists $\theta \in C^\infty(\Sigma;\R)$ such that $\sin\theta =
-\tfrac{\lambda}{2}$ and $\cos\theta = \tfrac{\tau}{2}$.
Since $  i  (\sigma \cdot \mathbf{n})\sigma_3 =  \sigma \cdot
\mathbf{t}$ and $(\sigma\cdot\mathbf{n})^2 = \bI_2$ (see \eqref{eq:Pauli.squares}, \eqref{eq:sigma.t}, and \eqref{eq:sigma_n.t} below),
we may rewrite the condition 
for $f_+$ in \eqref{eq:defn.D.shell.domain.confinement}  as
\begin{equation}\label{eq:quantum.dot.BC}
  \big[\bI_2 - \cos\theta (\sigma\cdot \mathbf{t}) - \sin\theta\sigma_3 \big]
  \mathcal{T}_+^D f_+ = 0.
\end{equation}
These are the \emph{quantum
  dot} boundary conditions (see
\cite{benguria2017self,benguria2017spectral, pizzichillo2019self}
and references therein).
In particular, we have the \emph{infinite mass} boundary conditions
when $(\eta,\tau,\lambda) = (0,\pm 2, 0)$ and the \emph{zig-zag}
boundary conditions when $(\eta,\tau,\lambda) = (0,0,\pm 2)$. 
By means of confinement with the electric and Lorentz scalar
$\delta$--shell interactions only, it is possible to realise the quantum dot
boundary conditions \eqref{eq:quantum.dot.BC} for 
$\theta\in [0,2\pi) \setminus\{\tfrac{\pi}{2}, \tfrac{3\pi}{2}\}$,
that is all the possible ones except
the zig-zag boundary condition,  see \cite[Remark
4.2]{behrndt2019two}.
Considering also the magnetic $\delta$--shell interaction then allows to
 describe every Dirac operator on a domain with
quantum dot boundary conditions as a Dirac operator with a
$\delta$--shell interaction.

For any choice of parameters $\eta,\tau,\lambda$
such that, everywhere on $\Sigma$,  $d=-4$  and $(\eta,\tau,\lambda)\neq (0,0,\pm2)$ the operator
$\ccD_{\eta,\tau,\lambda}$ is already described in \Cref{thm:D.shell.introduction}.
The cases $(\eta,\tau,\lambda) = (0,0,\pm2)$ are \emph{critical}, because we have
$\mathfrak{C}(\eta,\tau,\lambda)=0$. They are discussed in the following subsection.

\subsection{The critical case}
In \cite[Theorem 1.2]{behrndt2019two}, the critical cases when  $\mathfrak{C}(\eta,\tau,\lambda)= \lambda =
0$ and $\eta,\,\tau\in\R$ are described, namely, the self-adjointness is proved and the spectral properties are analysed. We complement this result by analysing
the case $\mathfrak{C}(\eta,\tau,\lambda) = \eta = \tau = 0$, \ie we consider the
purely magnetic critical interactions $\pm 2 (\sigma \cdot \mathbf{t})
\delta_\Sigma$.
In this case we prove the self-adjointness and give a detailed
description of the spectrum: we take the advantage of the phenomenon of confinement and the decomposition in
\Cref{thm:confinement.introduction} and we adapt the analysis of the massless Dirac operator on a domain with the zig-zag boundary
conditions given in \cite{schmidt1995remark}
to the case with a mass.

 \begin{theorem}\label{thm:critical}
	The operator $\ccD_{0,0,\lambda}$ with $\lambda \in \{-2,2\}$ is self-adjoint.
	The restriction $\ccD_{0,0,\lambda}\upharpoonright H^1(\mathbb{R}^2\setminus\Sigma;\mathbb{C}^2)$ is essentially self-adjoint. It holds that $\dom \ccD_{0,0,\lambda} \not\subset H^s(\mathbb{R}^2\setminus\Sigma;\mathbb{C}^2)$ for any $s >0$. Finally, the spectrum of $\ccD_{0,0,\lambda}$ is characterised as follows:
	\begin{enumerate}[label=$({\roman*})$]
        \item \label{rm_(iii)} $\sigma(\ccD_{0,0,\lambda}) = (-\infty,-|m|]\cup [|m|,\infty)$;
        \item \label{rm_(i)} 
        $\pm m$ are eigenvalues of infinite multiplicity;
        \item \label{rm_(ii)}
          there is a sequence of (embedded) eigenvalues $\{\pm\sqrt{m^2+\lambda_k}\}_{k\ge 1}$,
          where $\lambda_k$ are the eigenvalues of the Dirichlet Laplacian on $\Omega_+$
          enumerated in non-decreasing order and counted with multiplicities.
        \end{enumerate}	
\end{theorem}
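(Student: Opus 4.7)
The starting point is that $\eta = \tau = 0$ and $\lambda = \pm 2$ yield $d = -4$, so \Cref{thm:confinement.introduction} applies and provides the orthogonal decomposition $\ccD_{0,0,\lambda} = \ccD_{0,0,\lambda}^+ \oplus \ccD_{0,0,\lambda}^-$; every claim therefore reduces to a separate analysis of $\ccD_{0,0,\lambda}^\pm$. The first step I would carry out is to simplify the local boundary conditions: multiplying $[\pm i(\sigma\cdot\mathbf{n}) + (\lambda/2)(\sigma\cdot\mathbf{t})]\mathcal{T}_\pm^D f_\pm = 0$ from the left by $\mp i(\sigma\cdot\mathbf{n})$ and using $(\sigma\cdot\mathbf{n})^2 = \bI_2$ and $(\sigma\cdot\mathbf{n})(\sigma\cdot\mathbf{t}) = i\sigma_3$, the conditions become $[\bI_2 + (\lambda/2)\sigma_3]\mathcal{T}_+^D f_+ = 0$ and $[\bI_2 - (\lambda/2)\sigma_3]\mathcal{T}_-^D f_- = 0$. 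For $\lambda = 2$ these force the first component of $\mathcal{T}_+^D f_+$ and the second component of $\mathcal{T}_-^D f_-$ to vanish on $\Sigma$; the roles swap when $\lambda = -2$. In short, $\ccD_{0,0,\pm 2}^\pm$ are Dirac operators on $\Omega_\pm$ with zig-zag boundary conditions in the sense of~\cite{schmidt1995remark}.

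To prove self-adjointness of each $\ccD_{0,0,\lambda}^\pm$ I would adapt Schmidt's approach from the massless case by exploiting the Wirtinger form $-i\sigma\cdot\nabla = \bigl(\begin{smallmatrix} 0 & -2i\partial_z \\ -2i\partial_{\bar z} & 0 \end{smallmatrix}\bigr)$ together with $D_0^2 = (-\Delta + m^2)\bI_2$. Symmetry follows at once from the Green identity in $H(\sigma,\Omega_\pm)$, because the boundary integrand $\overline{g}\cdot(-i\sigma\cdot\mathbf{n})f$ couples the two components in a way that is annihilated by the zig-zag condition on $f$ and $g$. For surjectivity of $\ccD_{0,0,\lambda}^\pm \pm i$ I would, given $G = (G_1,G_2) \in L^2(\Omega_\pm;\C^2)$, eliminate the unconstrained component via a first-order Wirtinger relation coming from the second scalar equation, and solve the resulting scalar Dirichlet problem of the form $(-\Delta + m^2 + 1) f_j = \tilde G$ on $\Omega_\pm$ for the constrained component, which is classically well-posed on both the bounded interior and the unbounded exterior. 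Essential self-adjointness of the restriction to $H^1(\R^2\setminus\Sigma;\C^2)$ then follows by a mollification-and-cut-off argument that smooths the unconstrained component while preserving the zig-zag condition on the other one.

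For the remaining claims I would proceed as follows. The non-regularity $\dom\ccD_{0,0,\lambda} \not\subset H^s$ is witnessed by elements of the form $(0,g)$ with $g$ antiholomorphic in $\Omega_+$: they automatically satisfy the zig-zag condition, $D_0(0,g) = (0,-mg) \in L^2$, and a suitable choice of $g$ in the conjugate Bergman space with a rough boundary trace (e.g.\ a conformal pull-back of a holomorphic function on the disc with a power-type singularity at one boundary point) escapes $H^s$ for every $s>0$. For the spectrum, (iii) follows from the Wirtinger decoupling: whenever $f_j$ (the constrained component) is a Dirichlet Laplacian eigenfunction with eigenvalue $\lambda_k>0$ on $\Omega_+$, the pair reconstructed from $f_j$ via $f_{3-j} = -\tfrac{2i}{\mu+m}\partial f_j$ is an eigenfunction of $\ccD_{0,0,\lambda}^+$ for both $\mu = \pm\sqrt{m^2+\lambda_k}$, with the multiplicities of $\lambda_k$ preserved and the values strictly embedded in the essential spectrum since $\lambda_k>0$; (ii) follows from the observation that $(0,g)$ with $g$ antiholomorphic $L^2$ and $(h,0)$ with $h$ holomorphic $L^2$ furnish eigenfunctions at $\mp m$, the infinite-dimensional eigenspaces being drawn from the (conjugate) Bergman spaces of $\Omega_+$ and of the exterior $\Omega_-$, distributed between the two eigenvalues according to which component is constrained on which side; (i) follows because $\sigma(\ccD_{0,0,\lambda}^\pm)\subseteq(-\infty,-|m|]\cup[|m|,+\infty)$ from $D_0^2 = -\Delta + m^2 \geq m^2$ together with the zig-zag reduction, while the reverse inclusion is given by the embedded eigenvalues of (iii) accumulating at $\pm\infty$ on the interior and by a standard Weyl-sequence construction at infinity in the unbounded side $\Omega_-$.

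The main obstacle is the self-adjointness step, because the absence of any $H^s$-regularity rules out the boundary-triple framework used in the non-critical case of \Cref{sec:core}; the whole argument has to be run directly in $H(\sigma,\Omega_\pm)$ with only the weak trace theory of~\cite{benguria2017self}, and one must carefully justify that the Wirtinger reconstruction of the unconstrained component produces an $L^2$ function compatible with the zig-zag condition on the other one.
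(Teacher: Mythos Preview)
Your overall architecture is correct and matches the paper's: reduce via confinement to $\ccD^\pm$, recognise the zig-zag boundary conditions, and exploit the Wirtinger structure to connect the spectrum to the Dirichlet Laplacian. The differences are in the execution of three steps, and the paper's choices are each a little cleaner.

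\textbf{Self-adjointness.} You propose to prove surjectivity of $\ccD^\pm \pm i$ by eliminating the unconstrained component and solving a scalar Dirichlet problem $(-\Delta+m^2+1)f_j=\tilde G$. This works (note that $\tilde G$ lands in $H^{-1}$, not $L^2$, so the solution is obtained in $H^1_0$ via the weak formulation), but the paper bypasses it entirely: since the mass term $m\sigma_3$ is bounded, $\ccD^\pm$ is a bounded symmetric perturbation of the massless zig-zag Dirac operator, which is already known to be self-adjoint by \cite[Proposition~1]{schmidt1995remark}. Kato--Rellich finishes in one line.

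\textbf{Spectrum.} Instead of building eigenfunctions and Weyl sequences by hand, the paper computes the quadratic form $\|\ccD^\pm f\|^2$ and shows it decouples as $(\ccD^\pm)^2=(A_\pm^*A_\pm+m^2)\oplus(A_\pm A_\pm^*+m^2)$, where $A_+=-2i\partial_{\bar z}$ on $H^1_0(\Omega_+)$. By Schmidt's Propositions~2 and~3, the nonzero spectra of $A_\pm^*A_\pm$ and $A_\pm A_\pm^*$ coincide with those of the Dirichlet Laplacians on $\Omega_\pm$. A separate matrix conjugation $T_\nu=\mathrm{diag}\big(\sqrt{(m+\nu)/(m-\nu)},\sqrt{(m-\nu)/(m+\nu)}\big)$ gives the symmetry $\nu\in\sigma\Leftrightarrow -\nu\in\sigma$, which converts information about $(\ccD^\pm)^2$ into the full spectral picture. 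Your lower bound ``$D_0^2=-\Delta+m^2\ge m^2$'' is precisely what this quadratic-form computation justifies; as stated it is a differential identity, and the boundary terms in $\|\ccD^\pm f\|^2$ do need to be checked to vanish under the zig-zag condition.

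\textbf{Non-regularity.} Rather than exhibiting a single rough function, the paper observes that the anti-holomorphic Bergman space of $\Omega_+$ sits inside $\ker(\ccD^++m)$; an infinite-multiplicity eigenvalue is incompatible with $\dom\ccD^+\subset H^s(\Omega_+;\C^2)$ for $s>0$ by compact embedding. Your construction would also work, but this argument is shorter and simultaneously delivers item~(ii).
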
	
The proof of \Cref{thm:critical} is provided in \Cref{sec:critical}.
Note that the presence of embedded eigenvalues was already observed in the
non-critical confining case in the three dimensional setting in \cite[Th.~3.7]{amv2}, \cite[Prop.~3.3]{behrndt2020limiting}.
 
\subsection{Approximation of $\delta$-shell interactions by regular
  potentials}\label{ssec:approximation}

Approximations of $\delta$-shell interactions by more realistic regular potentials provide a justification for the idealized models.
In the present paper, we find approximations by regular potentials in the strong
resolvent sense for the Dirac operator with $\delta$--shell potentials
in the non-critical and non-confining case, that is for $\ccD_{\eta,\tau,\lambda}$ when $\mathfrak{C}(\eta,\tau,\lambda) \neq 0$ and
$d \neq -4$ everywhere on $\Sigma$. 
To this purpose, for $\eta,\tau,\lambda \in C^\infty(\Sigma;\R)$ we construct regular symmetric potentials $  \bV_{\eta,\tau,\lambda;\epsilon} \in
L^\infty(\R^2;\C^{2\times 2})$ supported on an $\epsilon$--neighborhood
of $\Sigma$ and such that
\begin{equation*}
  \bV_{\eta,\tau,\lambda;\epsilon} \xrightarrow[\epsilon \to 0]{}
  (\eta\bI_2 + \tau \sigma_3 + \lambda(\sigma\cdot \mathbf{t}))
  \delta_\Sigma
  \quad \text{ in the sense of distributions }
\end{equation*}
and we investigate the strong resolvent limit of $\ccD_0 +
\bV_{\eta,\tau,\lambda;\epsilon}$ as $\epsilon\to 0$.
It turns out that $\ccD_0 +
\bV_{\eta,\tau,\lambda;\epsilon} \xrightarrow[\epsilon \to 0]{}
\ccD_{\hat{\eta,}\hat{\tau},\hat{\lambda}}$  for appropriate
$\hat{\eta,}\hat{\tau},\hat{\lambda} \in C^\infty(\Sigma;\R)$ that are in general different
from the starting $\eta,\tau,\lambda$.

In the three-dimensional setting \cite{klein}, the proof of the strong resolvent convergence is an adaptation to the relativistic scenario of the approach used in \cite{approximation} for the case of Schr\"odinger operators with $\delta$-shell interactions. In \cite{approximation}, the co-dimension of the shell is strictly smaller than the order of the differential operator (the Laplacian). As a consequence, the singularities of the kernels of the boundary integral operators used in \cite{approximation} are weak enough to be controlled uniformly along the approximation procedure. This, in particular, leads to the convergence in the norm resolvent sense in the case of the Schr\"odinger operator. However, in the case of the Dirac operator, the co-dimension of the shell is exactly the same as the order of the differential operator. This has an important effect on the nature of the corresponding boundary integral operators, which now are singular integral operators instead of compact. Due to this new obstruction with respect to the Schr\"odinger case, the approach used in \cite{approximation} was adapted in \cite{klein} to the Dirac case to show the convergence in the strong resolvent sense assuming uniform smallness of the approximating potentials. 
Nevertheless, the question of strong resolvent convergence can also be addressed by more direct methods, which do not require any smallness assumption on the approximating potentials, such as by proving the convergence in the strong graph limit sense and then applying Theorem VIII.26 of \cite{reedsimon1}, which says that, in the self-adjoint setting, the strong graph convergence and the strong resolvent convergence are equivalent.

Originally, this approach was used in the one-dimensional setting \cite{hughes97,hughes99}. In this way, one can find approximating potentials for any type of $\delta$-potential. The norm resolvent convergence of approximations is harder to tackle. However, since in the one-dimensional case one can perform very explicit calculations with the resolvents  of  the approximations and the resolvents of their limit operators, it can be proved as well \cite{sebaklein,tusek19}. In the present work, we  will modify the ideas of \cite{hughes97} to get the sequence of approximating potentials for  general linear combinations of $\delta$-shell interactions, not only for the purely electrostatic or purely Lorentz scalar $\delta$-shell interactions as in \cite{klein}. It will converge in the strong resolvent sense, without any smallness  assumption on the approximating potentials.
We expect that a similar approach can be applied in the three-dimensional case as well.

\subsubsection*{Definition of $\bV_{\eta,\tau,\lambda;\epsilon}$}
In order to describe the approximating potentials $\bV_{\eta,\tau,\lambda;\epsilon}$ explicitly, we will introduce an additional notation,
referring to \Cref{sec:tubular.neighborhoods} for details.
For $\beta >0$, $\Sigma_\beta
:= \{x \in \R^2 \mid \mathrm{dist}(x,\Sigma) < \beta\}$ is
the tubular neighborhood of $\Sigma$ of width $\beta$.
If $\beta >0$ is sufficiently small,
$\Sigma_{\beta}$ is parametrized as
\begin{equation*}
  \Sigma_\beta =
  \{x_\Sigma + p \mathbf{n}(x_\Sigma) \mid x_\Sigma \in \Sigma, \, p
  \in (-\beta,\beta) \}.
\end{equation*}
Furthermore, let
\begin{equation*}
  h \in L^{\infty}(\R;\R),
  \quad \text{ with }\supp \, h \subset (-1,1), 
  \quad \int_{-1}^{1} h(t)\, dt = 1.
\end{equation*}
The function $h$ will determine the transverse profile of
$\bV_{\eta,\tau,\lambda;\epsilon}$.
For $0< \epsilon<\beta$, let
\begin{equation*}
  h_\epsilon(p) := \frac{1}{\epsilon}h\left(\frac{p}{\epsilon}\right),
  \quad \text{ for all } p \in \R.
\end{equation*}
We have $\supp \, h_\epsilon \subset (-\epsilon,\epsilon)$ 
and $\lim_{\epsilon\to 0}h_\epsilon=\delta_0$
in the
sense of the distributions $\mathcal{D}'(\R)$, where $\delta_0$ is the Dirac $\delta$-function supported at the origin, 
For $\eta,\tau,\lambda \in C^\infty(\Sigma;\R)$, let
\begin{equation}\label{eq:defb.B}
  B_{\eta,\tau,\lambda} \in C^{\infty}(\Sigma;\C^{2\times 2}),
  \quad B_{\eta,\tau,\lambda}(x_\Sigma) := (\eta \bI_2 + \tau \sigma_3 + \lambda (\sigma\cdot\mathbf{t}))(x_\Sigma).
\end{equation}
The function $B_{\eta,\tau,\lambda}$ will encode the matrix structure of
the approximating potentials; for all $x_\Sigma\in\Sigma$, the matrix
$B_{\eta,\tau,\lambda}(x_\Sigma)$ is symmetric.
Finally, for  any $\epsilon\in(0,\beta)$, we define the symmetric approximating
potentials $\bV_{\eta,\tau,\lambda;\epsilon}\in L^{\infty}(\R^2;\C^{2\times 2})$ as follows:
\begin{equation}\label{eq:defn.bV}
    \bV_{\eta,\tau,\lambda;\epsilon}(x) :=
    \begin{cases}
      B_{\eta,\tau,\lambda}(x_\Sigma) h_\epsilon(p),
      \quad &\text{ if }x = x_\Sigma + p \mathbf{n}(x_\Sigma) \in
      \Sigma_{\beta},\\
      0, \quad &\text{ if }x\in\R^2\setminus \Sigma_{\beta}.
    \end{cases}
\end{equation}
It is easy to see that
$\lim_{\epsilon\to 0}\V_{\eta,\tau,\lambda;\epsilon}= B_{\eta,\tau,\lambda}
\delta_{\Sigma}$ in
$\mathcal{D}'(\R^2;\C^{2\times 2})$.

For $0<\epsilon<\beta$, we define the family of Dirac operators $\{\ccE_{\eta,\tau,\lambda;\epsilon}\}_{\epsilon}$ 
as follows:
\begin{equation}\label{eq:defn.Eepsilon}
  \begin{split}
  & \dom \ccE_{\eta,\tau,\lambda;\epsilon} := \dom \ccD_0=H^1(\R^2;\C^2),
  \\ & \ccE_{\eta,\tau,\lambda;\epsilon} \psi := \ccD_0 \psi + \bV_{\eta,\tau,\lambda;\epsilon}\psi \quad
  \text{ for all }\psi \in \dom \ccE_{\eta,\tau,\lambda;\epsilon}.
\end{split}
\end{equation}
Since $\bV_{\eta,\tau,\lambda;\epsilon}$ are bounded and symmetric, the operators $\ccE_{\eta,\tau,\lambda;\epsilon}$ are self-adjoint, by the
Kato-Rellich theorem. 

We can now state the main result of this subsection.
\begin{theorem}\label{thm:approximation}
	Let either $\eta,\tau,\lambda \in C^\infty(\Sigma;\R)$ be such that
	$d(x)\neq k^2\pi^2$, for all $k \in \N_0$ and
	for all $x\in\Sigma$, 
	or $\eta,\tau,\lambda \in \R$ be such that $d = (2k_0)^2 \pi^2$
        for some $k_0\in \N_0$. 
	Let $\hat{\eta,}\hat{\tau},\hat{\lambda}\in C^{\infty}(\Sigma;\R)$ be
        defined as follows:
	\begin{itemize}
		\item if $d>0$ 
		then
		\begin{equation}
		\label{eq:statement.going.d>0}
		(\hat{\eta},\hat{\tau},\hat{\lambda})
		= \dfrac{\tan(\sqrt{d}/2)}{\sqrt{d}/2} (\eta,\tau,\lambda);
		\end{equation}
		\item if $d = 0$ then
		\begin{equation} \label{eq:statement.going.d=0}
		(\hat{\eta},\hat{\tau},\hat{\lambda}) = (\eta,\tau,\lambda);
		\end{equation}
		\item if $d < 0$ then
		\begin{equation}
		\label{eq:statement.going.d<0}
		(\hat{\eta},\hat{\tau},\hat{\lambda})
		= \dfrac{\tanh(\sqrt{-d}/2)}{\sqrt{-d}/2} (\eta,\tau,\lambda).
		\end{equation}
	\end{itemize}
		Let $\ccE_{\eta,\tau,\lambda;\epsilon}$ be defined as in
		\eqref{eq:defn.Eepsilon} and
		$\ccD_{\hat{\eta},\hat{\tau},\hat{\lambda}}$ be defined as in
		\eqref{eq:defn.D.shell.domain}, \eqref{eq:defn.D.shell}.
		If
		$\mathfrak{C}(\hat{\eta},\hat{\tau},\hat{\lambda})(x)
		\neq 0$
		for all $x\in \Sigma$,	
		then
		\begin{equation*}
		\ccE_{\eta,\tau,\lambda;\epsilon} \xrightarrow[\epsilon \to 0]{}
	\ccD_{\hat{\eta,}\hat{\tau},\hat{\lambda}}
	\quad \text{ in the strong resolvent sense.}
	\end{equation*}
\end{theorem}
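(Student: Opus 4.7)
The plan is to establish strong graph convergence of $\ccE_{\eta,\tau,\lambda;\epsilon}$ to $\ccD_{\hat\eta,\hat\tau,\hat\lambda}$ and then invoke Theorem~VIII.26 of \cite{reedsimon1} to conclude strong resolvent convergence. Since both operators are self-adjoint it suffices to produce, for every $f$ in a suitable core of $\ccD_{\hat\eta,\hat\tau,\hat\lambda}$, a family $\{\psi_\epsilon\}\subset\dom\ccE_{\eta,\tau,\lambda;\epsilon}=H^1(\R^2;\C^2)$ with $\psi_\epsilon\to f$ and $\ccE_{\eta,\tau,\lambda;\epsilon}\psi_\epsilon\to\ccD_{\hat\eta,\hat\tau,\hat\lambda}f$ in $L^2(\R^2;\C^2)$. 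This is the two-dimensional version of the approach of Hughes~\cite{hughes97} mentioned in Subsection~\ref{ssec:approximation}.

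\textbf{Construction of the approximants.} Outside $\Sigma_\epsilon$ I take $\psi_\epsilon:=f$. Inside $\Sigma_\epsilon$, writing $x=x_\Sigma+p\,\mathbf{n}(x_\Sigma)$ with $p\in(-\epsilon,\epsilon)$, the leading part of $\ccD_0+\bV_{\eta,\tau,\lambda;\epsilon}$ in the normal direction is $-i(\sigma\cdot\mathbf{n}(x_\Sigma))\partial_p+B_{\eta,\tau,\lambda}(x_\Sigma)\,h_\epsilon(p)$, whose associated transverse ODE in $p$ has fundamental solution
\[
U_\epsilon(x_\Sigma,p):=\exp\!\bigl(i(\sigma\cdot\mathbf{n}(x_\Sigma))\,B_{\eta,\tau,\lambda}(x_\Sigma)\,H_\epsilon(p)\bigr),\qquad H_\epsilon(p):=\int_{-\epsilon}^{p}h_\epsilon(t)\,dt.
\]
I define $\psi_\epsilon$ inside $\Sigma_\epsilon$ by the ansatz $\psi_\epsilon(x_\Sigma,p):=U_\epsilon(x_\Sigma,p)\,\mathcal{T}_-^D f_-(x_\Sigma)$, so that $\psi_\epsilon$ matches $f_-$ at $p=-\epsilon$; the matching with $f_+$ at $p=+\epsilon$ is precisely what will pin down the relation between $(\eta,\tau,\lambda)$ and $(\hat\eta,\hat\tau,\hat\lambda)$. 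A small mollification in $p$ near $\pm\epsilon$ ensures $\psi_\epsilon\in H^1(\R^2;\C^2)$.

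\textbf{Renormalization via matrix exponentials.} The requirement that the ansatz match $f_\pm$ at $p=\pm\epsilon$ for every $f\in\dom\ccD_{\hat\eta,\hat\tau,\hat\lambda}$ reads $\mathcal{T}_+^D f_+=\exp(i(\sigma\cdot\mathbf{n})B_{\eta,\tau,\lambda})\,\mathcal{T}_-^D f_-$, and this has to be equivalent to the jump condition in \eqref{eq:defn.D.shell.domain} for the parameters $(\hat\eta,\hat\tau,\hat\lambda)$. The Pauli identities $(\sigma\cdot\mathbf{n})(\sigma\cdot\mathbf{t})=i\sigma_3$, $(\sigma\cdot\mathbf{n})\sigma_3=-\sigma_3(\sigma\cdot\mathbf{n})$, and $\{\sigma_3,\sigma\cdot\mathbf{t}\}=0$ yield that $A:=i(\sigma\cdot\mathbf{n})B_{\eta,\tau,\lambda}$ is traceless with $A^2=-d\,\bI_2$, so the formulas of Appendix~\ref{sec:Appendix} give
\[
\exp(A)=
\begin{cases}
\cos(\sqrt{d})\,\bI_2+\tfrac{\sin(\sqrt{d})}{\sqrt{d}}\,A, & d>0,\\
\bI_2+A, & d=0,\\
\cosh(\sqrt{-d})\,\bI_2+\tfrac{\sinh(\sqrt{-d})}{\sqrt{-d}}\,A, & d<0.
\end{cases}
\]
Rewriting $\exp(A)=(\bI_2+\tfrac{i}{2}(\sigma\cdot\mathbf{n})\hat B)(\bI_2-\tfrac{i}{2}(\sigma\cdot\mathbf{n})\hat B)^{-1}$ with $\hat B:=B_{\hat\eta,\hat\tau,\hat\lambda}$ and applying the scalar Cayley identity $e^{i\theta}=(1+i\tan(\theta/2))(1-i\tan(\theta/2))^{-1}$ in the eigenbasis of $A$ forces $\hat B=\frac{\tan(\sqrt{d}/2)}{\sqrt{d}/2}B_{\eta,\tau,\lambda}$ when $d>0$ and the analogous $\tanh$-formula when $d<0$, exactly as in \eqref{eq:statement.going.d>0}--\eqref{eq:statement.going.d<0}. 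The excluded values $d=k^2\pi^2$ are precisely the degeneracies at which the Cayley inversion fails; in the constant subcase $d=(2k_0)^2\pi^2$ one has $\exp(A)=\pm\bI_2$ globally and $(\hat\eta,\hat\tau,\hat\lambda)=(0,0,0)$, so the limit reduces to $\ccD_0$.

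\textbf{Convergence and main obstacle.} The $L^2$-convergence $\psi_\epsilon\to f$ is immediate from $|\Sigma_\epsilon|\to 0$ together with the pointwise boundedness of $U_\epsilon$. The crux is to show that $\ccE_{\eta,\tau,\lambda;\epsilon}\psi_\epsilon\to\ccD_{\hat\eta,\hat\tau,\hat\lambda}f$ in $L^2$: equality already holds on $\R^2\setminus\Sigma_\epsilon$, and on $\Sigma_\epsilon$ the singular $O(\epsilon^{-1})$ contribution $-i(\sigma\cdot\mathbf{n})\partial_p\psi_\epsilon$ cancels exactly against $\bV_{\eta,\tau,\lambda;\epsilon}\psi_\epsilon$ by the very definition of $U_\epsilon$. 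What remains on $\Sigma_\epsilon$ is the tangential derivative $-i(\sigma\cdot\mathbf{t})\partial_s\psi_\epsilon$, the mass term $m\sigma_3\psi_\epsilon$, and the curvature correction coming from the tubular Jacobian $1+pk(x_\Sigma)$; each is uniformly bounded in $\epsilon$ on $\Sigma_\epsilon$, so its $L^2(\Sigma_\epsilon)$-norm is $O(\epsilon^{1/2})\to 0$. The main obstacle is the uniform-in-$\epsilon$ control of the tangential derivatives $\partial_s U_\epsilon$, which involve $\partial_s\mathbf{n}$, $\partial_s B_{\eta,\tau,\lambda}$, and the algebraic structure of the matrix exponential: this requires restricting $f$ to the core $\dom\ccD_{\hat\eta,\hat\tau,\hat\lambda}\cap H^1(\R^2\setminus\Sigma;\C^2)$ supplied by \Cref{thm:D.shell.introduction}, so that $\mathcal{T}_\pm^D f_\pm\in H^{1/2}(\Sigma;\C^2)$. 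A standard density argument then extends the convergence to all of $\dom\ccD_{\hat\eta,\hat\tau,\hat\lambda}$.
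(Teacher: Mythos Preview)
Your overall strategy—strong graph convergence via a transverse ODE ansatz and the matrix-exponential/Cayley computation giving the renormalized parameters—matches the paper's, and your derivation of $(\hat\eta,\hat\tau,\hat\lambda)$ is correct. However, the specific ansatz you propose has a real regularity gap that the paper avoids by a different construction.

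Inside $\Sigma_\epsilon$ you set $\psi_\epsilon(x_\Sigma,p)=U_\epsilon(x_\Sigma,p)\,\mathcal{T}_-^D f_-(x_\Sigma)$, a function built from the trace alone. But $\mathcal{T}_-^D f_-\in H^{1/2}(\Sigma;\C^2)$ only, so $\partial_s(\mathcal{T}_-^D f_-)$ lies merely in $H^{-1/2}(\Sigma;\C^2)$ and need not be in $L^2(\Sigma)$. Hence the term $U_\epsilon\,\partial_s(\mathcal{T}_-^D f_-)$ in the tangential derivative of $\psi_\epsilon$ is \emph{not} uniformly bounded on $\Sigma_\epsilon$, and your estimate ``$L^2(\Sigma_\epsilon)$-norm is $O(\epsilon^{1/2})$'' breaks down. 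The core you invoke, $\dom\ccD_{\hat\eta,\hat\tau,\hat\lambda}\cap H^1(\R^2\setminus\Sigma;\C^2)$, is in fact the whole domain by \Cref{thm:D.shell.introduction}, so it does not improve the trace regularity. You would need to argue that functions with, say, $C^\infty$ traces form a core of $\ccD_{\hat\eta,\hat\tau,\hat\lambda}$; this is plausible but not supplied. The ``small mollification near $\pm\epsilon$'' is a second loose end: at $p=\pm\epsilon$ your ansatz equals $\mathcal{T}_\pm^D f_\pm(x_\Sigma)$, whereas $f$ equals $f_\pm(x_\Sigma\pm\epsilon\mathbf{n})$, and patching these differences while preserving the cancellation and the $L^2$-smallness requires care you have not provided.

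The paper sidesteps both issues by \emph{multiplying} $\psi$ rather than replacing it: one sets $\psi_\epsilon:=\bU_\epsilon\psi$ with $\bU_\epsilon(x)=\exp[i(\sigma\cdot\mathbf{n})B_{\eta,\tau,\lambda}(x_\Sigma)\,H_\epsilon(p)]$ on $\Sigma_\epsilon$ and $\bU_\epsilon=\bI_2$ outside, where now $H_\epsilon$ is chosen \emph{discontinuous at $p=0$} with a unit jump. The jump of $\bU_\epsilon$ across $\Sigma$ is exactly $R_{\hat\eta,\hat\tau,\hat\lambda}$, which cancels the jump of $\psi$ encoded in the transmission condition~\eqref{eq:BC}; thus $\bU_\epsilon\psi\in H^1(\R^2;\C^2)$ without any mollification, and the matching at $p=\pm\epsilon$ is automatic since $\bU_\epsilon=\bI_2$ there. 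The residual terms after the $\bV_\epsilon$-cancellation are $(\bU_\epsilon-\bI_2)\partial_j\psi$ and a bounded remainder $Q_\epsilon\psi$, both supported on $\Sigma_\epsilon$; since $\partial_j\psi\in L^2(\R^2\setminus\Sigma)$ (not merely a trace), dominated convergence gives $L^2$-smallness directly. In short, the paper's multiplicative ansatz leverages the full $H^1(\Omega_\pm)$ regularity of $\psi$, whereas your replacement ansatz throws it away and is left with only $H^{1/2}$ boundary data.
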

The proof of \Cref{thm:approximation} is in \Cref{sec:approximation}.
\begin{remark}
  In the case that $d=0$ the phenomenon of {renormalization of
    the coupling constants} does not occur. This was already observed
  in the one-dimensional setting in \cite{tusek19}.
\end{remark}
\begin{remark}
  Thanks to \Cref{thm:approximation}, for all $\eta
  \in \R\setminus\{(2k+1)\pi, (\frac{1}{2}+k)\pi \mid k \in \Z\} ,\tau \in \R$ we have
$\ccE_{\eta,0,0;\epsilon} \to \ccD_{\hat{\eta},0,0}$ and
$\ccE_{0,\tau,0;\epsilon} \to \ccD_{0, \hat{\tau},0}$ as $\epsilon \to
0$, with $\hat{\eta} = 2\tan(\eta/2)$ and $\hat{\tau} =2\tanh(\tau/2)$.
Exactly the same renormalization of the coupling constant appeared in the one-dimensional \cite{sebaklein} and  three-dimensional \cite{klein} cases. The renormalization of the coupling constant for a general one-dimensional relativistic point interaction was investigated in \cite{hughes99} and later in \cite{tusek19}, where exactly the same formulae for renormalization as in  \Cref{thm:approximation} were discovered.

\end{remark}

\begin{remark}\label{rem:unitary_eq}
  From \eqref{eq:statement.going.d>0}--\eqref{eq:statement.going.d<0},
  \begin{equation*}
    \hat{d}:=\hat{\eta}^2 - \hat{\tau}^2 - \hat{\lambda}^2
    =
    \begin{cases}
      4\tan^2(\frac{\sqrt{d}}{2}) \quad &\text{ if }d>0,\\
      0 \quad &\text{ if }d=0,\\
      -4 \tanh^2(\frac{\sqrt{-d}}{2}) \quad &\text{ if }d<0.
    \end{cases}
  \end{equation*}
  Since in all the cases $\hat{d} > -4$,  \Cref{thm:approximation} does not provide strong
  convergence to a Dirac operator with a $\delta$--shell causing
  confinement (see \Cref{thm:confinement.introduction}).
  However, we recover the case $\hat{d}=-4$ in the limit $d \to
  -\infty$.  This suggests that it should be possible to get the
  confining cases by means of an approximation procedure in which we
  choose the coefficients $\eta= \eta_\epsilon$, $\tau =
  \tau_\epsilon$ and $\lambda = \lambda_\epsilon$
  dependent on the parameter $\epsilon$ so that the associated parameter $\hat{d} = \hat{d}_\epsilon$
  satisfies $\hat{d}_\epsilon > -4$ and $\hat{d}_\epsilon \rightarrow
  -4$ uniformly
   in the limit $\epsilon\rightarrow 0$. 
  Finally, the fact that $\hat{d}>-4$ is not a
  limitation, since if $\hat{\eta},\hat{\tau},\hat{\lambda} \in C^\infty(\Sigma;\R)$ are such that $\hat{d}<-4$,
  $\ccD_{\hat{\eta},\hat{\tau},\hat{\lambda}}$ is unitary equivalent
  to $\ccD_{\tilde{\eta},\tilde{\tau},\tilde{\lambda}}$, for
  $\tilde{\eta},\tilde{\tau},\tilde{\lambda}\in C^\infty(\Sigma;\R)$ such that
  $\tilde{d} := \tilde{\eta}^2 - \tilde{\tau}^2 - \tilde{\lambda}^2 >
  -4$, see \Cref{sec:spectral.relations}. 
\end{remark}

As a consequence of \Cref{thm:approximation}, we get immediately the
second result of this section.
\begin{corollary}\label{thm:approximation.back}
     Let $\hat{\eta},\hat{\tau},\hat{\lambda} \in C^\infty(\Sigma;\R)$ be such that, everywhere on $\Sigma$, 
      $\hat{d} := \hat{\eta}^2 - \hat{\tau}^2 -\hat{\lambda}^2 > -4$
      and $\mathfrak{C}(\hat{\eta},\hat{\tau},\hat{\lambda}) \neq 0$.
    Let $\eta,\tau,\lambda \in C^\infty(\Sigma;\R)$ be defined as follows:
    \begin{itemize}
    \item if $\hat{d} > 0$, then
      \begin{equation*}
        (\eta,\tau,\lambda) = \frac{\arctan \sqrt{\hat{d}}/2 +
          k\pi}{\sqrt{\hat{d}}/2}
        (\hat{\eta},\hat{\tau},\hat{\lambda}),
        \quad \text{ for } k \in \Z;
      \end{equation*}
    \item if $\hat{d} = 0$ and
      $\hat{\eta},\hat{\tau},\hat{\lambda}$ are constant, then
      \begin{equation*}
        (\eta,\tau,\lambda) = (\hat{\eta},\hat{\tau},\hat{\lambda}).
      \end{equation*}
    	In particular, if $\hat{\eta} = \hat{\tau} = \hat{\lambda} = 0$, then
      $(\eta,\tau,\lambda) = (\hat{\eta},\hat{\tau},\hat{\lambda})$ 
      or $\eta,\tau,\lambda\in \R$  are such that $\eta^2 - \tau^2 - \lambda^2 = (2k \pi)^2$,
      for $k \in \N$; 
    \item if $-4 < \hat{d} < 0$, then
            \begin{equation*}
        (\eta,\tau,\lambda) = \frac{\arctanh \sqrt{-\hat{d}}/2}{\sqrt{-\hat{d}}/2}
        (\hat{\eta},\hat{\tau},\hat{\lambda}).
      \end{equation*}
    \end{itemize}
 Then
  \begin{equation*}
   \ccE_{\eta,\tau,\lambda;\epsilon} \xrightarrow[\epsilon \to 0]{}
   \ccD_{\hat{\eta,}\hat{\tau},\hat{\lambda}}
   \quad \text{ in the strong resolvent sense.}
 \end{equation*}
\end{corollary}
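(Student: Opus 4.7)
The strategy is to regard the corollary as the inverse map of \Cref{thm:approximation}: for each proposed triple $(\eta,\tau,\lambda)$, we check that the hypotheses of \Cref{thm:approximation} hold, compute $d = \eta^2-\tau^2-\lambda^2$ in terms of $\hat d$, and then verify that the formulas \eqref{eq:statement.going.d>0}--\eqref{eq:statement.going.d<0} return precisely $(\hat\eta,\hat\tau,\hat\lambda)$. Since $\mathfrak{C}(\hat\eta,\hat\tau,\hat\lambda)\neq 0$ everywhere is assumed directly, the only nontrivial checks concern the hypothesis on $d$ in \Cref{thm:approximation}.

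\textbf{Case $\hat d>0$.} With $(\eta,\tau,\lambda)=\frac{\arctan(\sqrt{\hat d}/2)+k\pi}{\sqrt{\hat d}/2}(\hat\eta,\hat\tau,\hat\lambda)$, a direct computation gives $d = \bigl(2\arctan(\sqrt{\hat d}/2)+2k\pi\bigr)^2>0$, so $\sqrt d/2 = \arctan(\sqrt{\hat d}/2)+k\pi$. Since $\arctan(\sqrt{\hat d}/2)\in (0,\pi/2)$, we have $\sqrt d \notin \pi\Z$, hence $d(x)\neq k'^2\pi^2$ for every $k'\in\N_0$ and every $x\in\Sigma$. The first alternative in the hypothesis of \Cref{thm:approximation} holds. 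Applying the theorem and using $\tan(\arctan(\sqrt{\hat d}/2)+k\pi)=\sqrt{\hat d}/2$, the scalar prefactor $\frac{\tan(\sqrt d/2)}{\sqrt d/2}$ cancels the one used to define $(\eta,\tau,\lambda)$, yielding convergence to $\ccD_{\hat\eta,\hat\tau,\hat\lambda}$.

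\textbf{Case $-4<\hat d<0$.} The condition $|\sqrt{-\hat d}/2|<1$ makes $\arctanh(\sqrt{-\hat d}/2)$ well defined. With the prescribed scaling, one computes $-d = 4\bigl(\arctanh(\sqrt{-\hat d}/2)\bigr)^2>0$, so $d<0$ and the first alternative in the hypothesis of \Cref{thm:approximation} is vacuously satisfied (since $k^2\pi^2\geq 0$). Applying \Cref{thm:approximation} with formula \eqref{eq:statement.going.d<0} and using $\tanh(\arctanh(\sqrt{-\hat d}/2))=\sqrt{-\hat d}/2$ gives back $(\hat\eta,\hat\tau,\hat\lambda)$, and the convergence follows.

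\textbf{Case $\hat d=0$.} If one sets $(\eta,\tau,\lambda)=(\hat\eta,\hat\tau,\hat\lambda)$, which are constants by assumption, then $d=0=(2\cdot 0)^2\pi^2$, so the second alternative of the hypothesis of \Cref{thm:approximation} holds with $k_0=0$, and \eqref{eq:statement.going.d=0} immediately gives convergence to $\ccD_{\hat\eta,\hat\tau,\hat\lambda}$. For the sub-case $\hat\eta=\hat\tau=\hat\lambda=0$, one may alternatively choose $\eta,\tau,\lambda\in\R$ with $d=(2k\pi)^2$, $k\in\N$: again the constants satisfy the second alternative of the hypothesis, and \eqref{eq:statement.going.d>0} reads $(\hat\eta,\hat\tau,\hat\lambda)=\frac{\tan(k\pi)}{k\pi}(\eta,\tau,\lambda)=(0,0,0)$, as required.

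The argument is essentially an algebraic verification; no further analytic work is needed beyond invoking \Cref{thm:approximation}. The only subtle point to be careful about is the case-by-case consistency of the hypothesis on $d$ in \Cref{thm:approximation}, in particular the necessity of restricting to constant coefficients precisely when the constructed $d$ hits the exceptional values $(2k_0)^2\pi^2$; this restriction is already built into the statement of the corollary.
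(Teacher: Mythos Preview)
Your proof is correct and follows essentially the same approach as the paper: the paper's proof simply invokes \Cref{thm:approximation} together with \Cref{lem:going.back.home}, the latter being exactly the pointwise algebraic inversion you carry out inline case by case. The only cosmetic difference is that the paper packages the computation of $(\eta,\tau,\lambda)$ from $(\hat\eta,\hat\tau,\hat\lambda)$ into an appendix lemma rather than working it out in the body of the proof.
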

The proof of \Cref{thm:approximation.back} is
also given in \Cref{sec:approximation}.
\begin{remark}
  In the case that $\hat{d} \geq 0$ the correspondence
  $(\hat{\eta},\hat{\tau},\hat{\lambda})\mapsto(\eta,\tau,\lambda)$
  is not one-to-one. One can choose the coupling constants in the approximating potentials arbitrarily large and still ends up with the same limit operator. From the physical perspective, we suppose that this surprising behaviour is possible due to the \emph{Klein effect} (also called the Klein paradox). Usually, the Klein effect is related to the scattering on the electrostatic barrier when, speaking vaguely, the transmission coefficient does not depend on the height of the barrier monotonously, see \cite{DoCa_99} for an overview.  Clearly,
  this effect occurs for the pure electrostatic interaction, for which
  $\hat d>0$.
  On the other hand, one can push $\hat d$ below zero, and thus eliminate the Klein effect, by switching  sufficiently strong Lorentz scalar/magnetic fields on.
\end{remark}

We conclude the presentation of our results underlining that, in the case that $\hat\eta=\hat\tau=0$ and
$\hat\lambda\in\R\setminus\{\pm 2\}$,  it is possible to give a
simple direct proof of \Cref{thm:approximation.back}, constructing an alternative sequence of
approximations without making use of ``parallel coordinates'', see
\Cref{sec:alternative_approximation}. 

\section{Preliminaries}
\label{sec:preliminaries}

In order to prove our results we need to introduce a number of
mathematical objects and related results.
First, we discuss in Subsections~\ref{sec:tangent.normal} and~\ref{sec:tubular.neighborhoods} planar curves and their tubular neighborhoods. 
Then in Subsection~\ref{sec:Pauli} we provide some identities related to Pauli matrices.  Further, we give in Subsection~\ref{sec:pseudo} basic ideas on the Sobolev spaces and pseudo-differential operators on $\Sigma$.
Then we recall in Subsection~\ref{sec:trace} the concept of the trace operator.
After that we outline in Subsection~\ref{sec:triples} the approach of boundary triples to the extensions theory of symmetric operators. Finally, we recall some properties of the free Dirac operator in Subsection~\ref{sec:free.dirac} and define several associated auxiliary integral operators on $\Sigma$ in Subsection~\ref{sec:auxiliary}.
In this preliminary section we partially follow the presentation in \cite{behrndt2019two}, that gives the
theoretical background and the technical instruments for our analysis.
We refer to it and to the references therein for the proofs of the results in
this section and for further details.

\subsection{Tangent, normal and curvature of $\Sigma$}
\label{sec:tangent.normal}
We gather here some elementary facts on curves, in order to fix the
notations. Details can be  found, \eg in \cite{abate2012curves}.

We recall that $\Omega \subset \R^2$ is a bounded open simply connected set with $C^{\infty}$
 boundary $\Sigma := \partial\Omega$
.
Set $\ell := |\Sigma|$ and let $\gamma : \R \big/ \ell \Z \to \Sigma
\subset \R^2$
be a smooth arc-length parametrization of $\Sigma$ with positive
orientation.
Let
\begin{align}
  \label{eq:defn.tgamma}
  & \mathbf{t}_\gamma : \R/\ell\Z \to \R^2,
  &
  &\mathbf{t}_\gamma(s) 
    = \dot \gamma(s), \\
  \label{eq:defn.ngamma}
  & \mathbf{n}_\gamma : \R/\ell\Z \to \R^2,
  & 
  &\mathbf{n}_\gamma(s) 
    =   (\dot \gamma_2(s), -\dot \gamma_1(s)),
\end{align}
where the dot stands for the derivative with respect to the arc-length
$s$. Clearly, $\{\mathbf{n}_\gamma(s),
\mathbf{t}_\gamma(s)\}$ is a positively oriented basis of $\R^2$ for
any $s\in\R/\ell\Z$. Moreover, by the Frenet-Serret formulas, there exists function $\kappa_\gamma$, called the signed curvature, such that
\begin{equation} \label{eq:FS_formulas}
  \dot{\mathbf{t}}_\gamma=-\kappa_\gamma \mathbf{n}_\gamma,
  \quad \dot{\mathbf{n}}_\gamma=\kappa_\gamma \mathbf{t}_\gamma.
\end{equation}
Therefore, we have
\begin{equation}  \label{eq:defn.kappagamma}
|\kappa_\gamma(s)| = \norm{\dot{\mathbf{t}}_\gamma(s)}
    = \norm{\ddot \gamma(s)}.
\end{equation}
We set $\mathbf{t} := \mathbf{t}_\gamma \circ \gamma^{-1}, 
  \mathbf{n} := \mathbf{n}_\gamma \circ \gamma^{-1}: \Sigma \to \R^2$, and
  $\kappa := \kappa_\gamma\circ\gamma^{-1}:\Sigma \to \R$.
The functions $\mathbf{t}, \mathbf{n}, \kappa$ are independent of the
particular choice of the positively oriented arc-length parametrization $\gamma$: $\mathbf{n}$ is the unit normal vector field along $\Sigma$
which points outwards of $\Omega_+$, $\mathbf{t}$ is the unit tangent
vector field along $\Sigma$, counter-clockwise oriented,
 and $\kappa$ is the signed curvature of $\Sigma$.
We remark that we choose the definition of the curvature $\kappa$ so
that it is non-negative for convex domains $\Omega$.

\subsection{Tubular neighborhoods of $\Sigma$}
\label{sec:tubular.neighborhoods}
Below we recall some elementary properties of tubular neighborhoods of planar curves. Details can be found in \cite[Chapter
	1]{exner2015quantum},\cite{lee2006riemannian,approximation}, see also
	\cite[Sections 1.6 and 2.2]{abate2012curves}.
For $\beta >0$,
\begin{equation*}
\Sigma_\beta
:= \{x \in \R^2 \mid \mathrm{dist}(x,\Sigma) < \beta \}
\end{equation*}
is
the tubular neighborhood of $\Sigma$ of width $\beta$.
Let us introduce the following mapping
\begin{equation*}
  \mathscr{L}_\gamma:\,\R/\ell \Z \times (-\beta,\beta) \to \R^2,
  \quad
  \mathscr{L}_\gamma(s,p) =
  \gamma(s)+ p \mathbf{n}_\gamma(s).
\end{equation*}
The following theorem shows that,  for all $\beta$ small enough, the map $\mathscr{L}_\gamma$
is a smooth parametrization of $\Sigma_\beta$.
\begin{theorem}[{\cite[Theorem 2.2.5]{abate2012curves}}]
  \label{thm:existence.tubular.neighborhood}
  There exists 
  $\beta_0\in(0,(\max |\kappa_\gamma|)^{-1})$ such that, for all          $\beta\in(0,\beta_0)$, 
  $\mathscr{L}_\gamma
  $ is a bijection of $\R/\ell\Z \times
  (-\beta,\beta)$ onto $\Sigma_\beta$.
\end{theorem}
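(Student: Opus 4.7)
The statement is the standard existence theorem for the tubular neighborhood of a smooth compact embedded planar curve, so the plan is essentially to reproduce the classical proof (which for this excerpt is attributed to Abate-Tovena). The idea is to first establish local invertibility of $\mathscr{L}_\gamma$ via the inverse function theorem, then upgrade it to a global bijection onto $\Sigma_\beta$ for $\beta$ small, by a compactness argument combined with the fact that every point sufficiently close to a compact smooth curve has a unique closest point which lies on a normal line.

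First, I would compute the differential of $\mathscr{L}_\gamma$. Using the Frenet-Serret relations~\eqref{eq:FS_formulas},
\begin{equation*}
  \partial_s \mathscr{L}_\gamma(s,p) = \dot\gamma(s) + p\,\dot{\mathbf n}_\gamma(s) = \bigl(1+p\kappa_\gamma(s)\bigr)\mathbf{t}_\gamma(s),\qquad \partial_p \mathscr{L}_\gamma(s,p) = \mathbf{n}_\gamma(s).
\end{equation*}
Since $\{\mathbf{t}_\gamma(s),\mathbf{n}_\gamma(s)\}$ is orthonormal, the Jacobian determinant equals $\pm(1+p\kappa_\gamma(s))$, which is nonzero whenever $|p|<(\max|\kappa_\gamma|)^{-1}$. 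The inverse function theorem then yields that $\mathscr{L}_\gamma$ is a local diffeomorphism on the open strip $\R/\ell\Z\times(-\beta_1,\beta_1)$ for any $\beta_1<(\max|\kappa_\gamma|)^{-1}$.

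Next I would establish global injectivity for sufficiently small width. Arguing by contradiction, suppose no such $\beta_0$ exists: then for each $n$ there would exist $(s_n,p_n)\neq(s_n',p_n')$ in $\R/\ell\Z\times(-1/n,1/n)$ with $\mathscr{L}_\gamma(s_n,p_n)=\mathscr{L}_\gamma(s_n',p_n')$. By compactness of $\R/\ell\Z$, one extracts a subsequence with $s_n\to s_*$, $s_n'\to s_*'$, $p_n,p_n'\to 0$, yielding $\gamma(s_*)=\gamma(s_*')$, hence $s_*=s_*'$ by injectivity of $\gamma$ on $\R/\ell\Z$. But then for $n$ large the points $(s_n,p_n),(s_n',p_n')$ would lie in a neighborhood on which $\mathscr{L}_\gamma$ is a local diffeomorphism by the previous step, contradicting the assumed distinctness.

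For surjectivity onto $\Sigma_\beta$ (with $\beta$ smaller still if needed), I would use that $\Sigma$ is compact and smooth, so for any $x$ with $\mathrm{dist}(x,\Sigma)<\beta$ sufficiently small the minimization $s\mapsto \|x-\gamma(s)\|^2$ attains its minimum at some $s_x$, and the first-order condition gives $x-\gamma(s_x)\perp \mathbf{t}_\gamma(s_x)$, i.e. $x-\gamma(s_x)=p\,\mathbf{n}_\gamma(s_x)$ with $|p|=\mathrm{dist}(x,\Sigma)<\beta$, so $x=\mathscr{L}_\gamma(s_x,p)$.

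The main (very mild) obstacle is to choose the threshold $\beta_0$ small enough that all three conditions hold simultaneously; taking $\beta_0$ less than $(\max|\kappa_\gamma|)^{-1}$, less than half the minimal distance between distinct branches that appears in the compactness argument, and less than the radius for which the nearest-point projection is well defined, secures the conclusion.
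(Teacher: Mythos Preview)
Your proof is correct and follows the classical argument for the tubular neighborhood theorem. The paper does not give its own proof of this statement; it simply cites \cite[Theorem~2.2.5]{abate2012curves}, so there is nothing to compare beyond noting that your argument is precisely the standard one found in that reference (inverse function theorem for local invertibility, compactness for global injectivity, nearest-point projection for surjectivity).
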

In the following we will always assume that $0<\beta <\beta_0$, for
$\beta_0>0$ given by \Cref{thm:existence.tubular.neighborhood}.
Thanks to the second formula in \eqref{eq:FS_formulas}, we get 
\begin{equation}\label{eq:derivatives.scrL}
  \nabla\mathscr{L}_\gamma(s,p)
  =
  \begin{pmatrix}
    \partial_s \mathscr{L}_\gamma(s,p)
    &
    \partial_p \mathscr{L}_\gamma(s,p)
  \end{pmatrix}
  =
  \begin{pmatrix}
    (1+ p \kappa_\gamma(s)) \mathbf{t}_\gamma(s)
    &
    \mathbf{n}_\gamma(s)
  \end{pmatrix},
\end{equation}
where $\partial_s \mathscr{L}_\gamma,\,\partial_p \mathscr{L}_\Gamma,\, \mathbf{t}_\gamma$, and $\mathbf{n}_\gamma$ should be understood as column vectors.
Thanks to \eqref{eq:defn.ngamma}, we obtain
\begin{equation}\label{eq:det.hatL}
  \det (\nabla \mathscr{L}_\gamma) (s,p)  
  =
  \det
  \begin{pmatrix}
    (1+ p \kappa_\gamma(s)) t_1(\gamma(s)) & +t_2(\gamma(s)) \\
    (1+ p \kappa_\gamma(s)) t_2(\gamma(s)) & -t_1(\gamma(s))
  \end{pmatrix}
  =
  -(1+ p \kappa_\gamma(s)).
\end{equation}

We remark that $\det (\nabla\mathscr{L}_\gamma) (s,p) < 0$ for all
$(s,p) \in \R/\ell\Z \times (-\beta,\beta)$, since
$\abs{p\kappa_\gamma(s)} < \beta \beta_0^{-1}< 1$.
Thus, we have
\begin{equation}\label{eq:change.of.variables}
  \int_{\Sigma_\beta} f(x) \, dx  =
  \int_{0}^{\ell} \int_{-\beta}^{\beta}
  f(\gamma(s) + p\mathbf{n}_\gamma(s))
  (1 + p\kappa_\gamma(s)) \,dp ds,
  \quad \text{for all }f\in L^1(\Sigma_\beta).
\end{equation}
Next, we define
\begin{align} 
  & \mathscr{P}_\gamma := (\mathscr{L}_{\gamma}^{-1})_1 :
  \Sigma_\beta \to \R/\ell \Z, 
  &\mathscr{P}_\gamma(\gamma(s) + p \mathbf{n}_\gamma(s)) = s, \label{eq:Pgamma}
  \\
  & \mathscr{P}_\perp :=(\mathscr{L}_{\gamma}^{-1})_2 :
  \Sigma_\beta \to (-\beta,\beta), 
  & \mathscr{P}_\perp(\gamma(s) + p \mathbf{n}_\gamma(s)) = p,\label{eq:Pperp}
\end{align}
Thanks to \eqref{eq:derivatives.scrL}, \eqref{eq:det.hatL}, and the inverse function theorem, we have that for all
$x = \gamma(s) + p \mathbf{n}_\gamma(s) \in \Sigma_\beta$ 
\begin{equation} \label{eq:L_grad}
    \nabla \mathscr{P}_\gamma (x) =
    \frac{1}{1+ p \kappa_\gamma(s)}\mathbf{t}_\gamma(s),
    \quad
    \nabla \mathscr{P}_\perp(x)
    = \mathbf{n}_\gamma(s).
\end{equation}

Finally, it is also convenient to define
\begin{equation*}
    \mathscr{L} : \Sigma \times (-\beta,\beta) \to \R^2,
    \quad \mathscr{L} (x_\Sigma, p) =
    \mathscr{L}_\gamma(\gamma^{-1}(x_\Sigma),p)
    =
    x_\Sigma +  p \mathbf{n}(x_\Sigma)
    \in \Sigma_\beta,
\end{equation*}
which is 
bijection of $\Sigma\times(-\beta,\beta)$ onto $\Sigma_\beta$, by
\Cref{thm:existence.tubular.neighborhood}; and
\begin{equation}\label{eq:PSigma}
      \mathscr{P}_\Sigma := \mathscr{L}_{1}^{-1} :
  \Sigma_\beta \to \Sigma, 
  \quad \mathscr{P}_\Sigma(x_\Sigma + p \mathbf{n}(x_\Sigma)) = x_\Sigma.
\end{equation}

\subsection{Pauli matrices}\label{sec:Pauli}
By an explicit calculation, one can verify that
\begin{align}
  \label{eq:Pauli.squares}
  \sigma_3^2 = (\sigma\cdot \mathbf{n})^2 = (\sigma\cdot \mathbf{t})^2 &=\bI_2,
  \\
  \label{eq:sigma.t}
  i  (\sigma \cdot \mathbf{n})\sigma_3
  &=
  \,
  \sigma \cdot \mathbf{t}, 
  \\
  \label{eq:sigma_n.t}
  (\sigma\cdot \mathbf{n})(\sigma\cdot \mathbf{t})&=i \sigma_3,
\end{align}
where in \eqref{eq:sigma.t} and \eqref{eq:sigma_n.t} we have used the fact that $(t_1,t_2) =
(-n_2,n_1)$.

\subsection{Sobolev spaces and pseudo-differential calculus on
  $\Sigma$}\label{sec:pseudo}
We denote {by} $\bT$ the torus $\bT := \mathbb{R}/\mathbb{Z}$;
the space of the periodic smooth
functions on the torus $\bT$
and the space of periodic distributions on the torus $\bT$ will be denoted by
$\mathcal{D}(\bT) = C^\infty(\bT)$  and $\mathcal{D}(\bT)'$, respectively.
For  $f \in \mathcal{D}(\bT)'$, we define its Fourier coefficients
using the duality pairing $\langle \cdot, \cdot
\rangle_{\mathcal{D}(\bT)',\mathcal{D}(\bT)}$ as follows:
\[
	\widehat{f}(n) := \langle f,
        e_{-n}\rangle_{\mathcal{D}(\bT)',\mathcal{D}(\bT)},\quad
        e_n : t\in \bT \mapsto e^{\rmi 2\pi n t}.
\]
For $s \in \R$, the Sobolev space of order $s$ on $\bT$ is defined as
\[
	H^s(\bT) := \Big\{f \in \cD(\bT)' \, \Big| \sum_{n =-\infty}^{+\infty}(1+|n|)^{2s}|\widehat{f}(n)|^2 < +\infty\Big\}.
      \]
 A linear operator $H$ on $C^\infty(\bT)$ is a periodic pseudo-differential operator on $\bT$ if there exists $h : \bT \times \Z \to \C$ such that:
\begin{enumerate}[label=$({\roman*})$]
	\item for all $n\in \Z$, $h(\cdot,n) \in C^\infty(\bT)$,
	\item $H$ acts as $Hf = \sum_{n\in\Z} h(\cdot,n) \widehat{f}(n) e_n$,
	\item there exists $\alpha \in \R$ such that for all $p,q\in\N_0$ there exists $c_{p,q}>0$ such that there holds
	\begin{equation} \label{eq:pseudo_cond}
		\Big|\Big(\frac{d^p}{dt^p} (\omega^q h)\Big)(t,n)\Big| \leq c_{p,q}(1+|n|)^{\alpha-q},
	\end{equation}
where the operator $\omega$ is defined for all $(t,n)\in\bT\times\Z$ by $(\omega h)(t,n) := h(t,n+1) - h(t,n)$.
\end{enumerate}
The number $\alpha$ is called the order of the pseudo-differential operator $H$. The set of all pseudo-differential operators of order $\alpha$ on $\bT$ is denoted $\Psi^\alpha$,  and we define
\[
	\Psi^{-\infty} := \bigcap_{\alpha\in\R} \Psi^{\alpha}.
\]

Recall that $\ell = |\Sigma|$ and that $\gamma : \R \big/ \ell \Z \to \Sigma$
is a smooth arc-length parametrization of $\Sigma$. We define the map $U^* : \cD(\bT) \to \cD(\Sigma)$ as
\[
	(U^*g)(x) := \ell^{-1}g(\ell^{-1}\gamma^{-1}(x)),\ x\in\Sigma,
\]
where we have set $\cD(\Sigma) := C^\infty(\Sigma)$, and the map $U : \cD(\Sigma)' \to \cD(\bT)'$ as
\begin{equation}\label{eqn:defU}
	\langle U f, g\rangle_{\cD(\bT)',\cD(\bT)} := \langle  f, U^*g\rangle_{\cD(\Sigma)',\cD(\Sigma)}.
\end{equation}
The Sobolev space of order $s\in\R$ on $\Sigma$ is defined as
\[
  H^s(\Sigma) := \{ f \in \cD(\Sigma)' \mid Uf \in H^s(\bT)\}.
\]
For all $s \geq 0$, $H^{-s}(\Sigma) = (H^s(\Sigma))'$ and 
 the duality pairing $\langle \phi, \psi
\rangle_{H^{-s},H^s}$ is defined for all $\phi \in
H^{-s}(\Sigma), \psi \in H^{s}(\Sigma)$.

A linear operator $H$ on $C^\infty(\Sigma)$ is a periodic pseudo-differential operator on $\Sigma$ of order $\alpha\in \R$ if the operator $H_0 := U H U^{-1} \in \Psi^\alpha$. The set of pseudo-differential operators on $\Sigma$ of order $\alpha$ is denoted $\Psi^\alpha_{\Sigma}$ and we set
\[
	\Psi_{\Sigma}^{-\infty} := \bigcap_{\alpha\in\R}\Psi_{\Sigma}^\alpha.
\]

In the next proposition we  gather some useful properties of the
pseudo-differential operators on $\Sigma$ (for proofs see \cite[Sections
5.8 \& 5.9]{saranen2013periodic}). 
\begin{proposition}\label{prop:pseudoimpo}
  Let $\alpha,\beta\in \R$ and $A\in \Psi_{\Sigma}^\alpha, B \in \Psi_{\Sigma}^\beta$.
  \begin{enumerate}[label=$({\roman*})$]
  \item\label{item:extension} For all $s\in\R$, $A$ extends uniquely to a bounded linear operator, denoted by the same letter, from $H^s(\Sigma)$ to $H^{s-\alpha}(\Sigma)$.
  \item\label{itm:22} \label{itm:pseudoimp1}There holds
    \[
      A + B \in \Psi_{\Sigma}^{\max(\alpha,\beta)},\quad AB \in \Psi_{\Sigma}^{\alpha+\beta},\quad [A,B] \in \Psi_{\Sigma}^{\alpha+\beta -1}. 
    \]
  \end{enumerate}
\end{proposition}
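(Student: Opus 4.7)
The plan is to reduce all three assertions to their analogues on the torus $\bT$ via the intertwiner $U$ of \eqref{eqn:defU}: by construction, $A\in \Psi_\Sigma^\alpha$ is equivalent to $UAU^{-1}\in\Psi^\alpha$, and $U$ is a topological isomorphism between $H^s(\Sigma)$ and $H^s(\bT)$ for every $s\in\R$ by the very definition of $H^s(\Sigma)$. Hence it suffices to argue on $\bT$.

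For \ref{item:extension}, fix $H_0\in \Psi^\alpha$ with symbol $h(t,n)$. Integration by parts in $t$ applied to the $k$-th Fourier coefficient of $t\mapsto h(t,n)$, together with \eqref{eq:pseudo_cond} for $q=0$ and arbitrary $p$, yields the decay $|\widehat{h(\cdot,n)}(k)|\leq C_N(1+|n|)^\alpha(1+|k|)^{-N}$ for every $N\ge 0$. A direct computation gives $\widehat{H_0 f}(m)=\sum_{n\in\Z}\widehat{h(\cdot,n)}(m-n)\widehat{f}(n)$, and then Peetre's inequality $(1+|m|)^{s-\alpha}\leq C(1+|m-n|)^{|s-\alpha|}(1+|n|)^{s-\alpha}$ combined with a Schur test on the resulting weighted kernel produces the bound $\|H_0 f\|_{H^{s-\alpha}(\bT)}\lesssim \|f\|_{H^s(\bT)}$ for every $s\in\R$. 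Uniqueness of the extension follows from density of $C^\infty(\bT)$ in $H^s(\bT)$.

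For \ref{itm:22}, the symbol of $A+B$ is $a+b$, and the estimates \eqref{eq:pseudo_cond} for $a$ and $b$ yield directly the same estimates for $a+b$ with exponent $\max(\alpha,\beta)$. For the product, the plan is to compute the action on the exponentials: since $Be_n= b(\cdot,n)e_n$, one obtains $ABe_n(t)=\sum_{m\in\Z} a(t,m)\,\widehat{b(\cdot,n)}(m-n)\, e_m(t)$, which identifies the symbol $c(t,n)$ of $AB$. A discrete summation-by-parts argument handling the difference operator $\omega$ and the multiplication by $e_n$, combined with the rapid decay of $\widehat{a(\cdot,\cdot)}$ and $\widehat{b(\cdot,\cdot)}$ transversally to the diagonal, shows that $c(t,n)=a(t,n)b(t,n)+r(t,n)$ with $r$ of order $\alpha+\beta-1$, and that $c$ obeys \eqref{eq:pseudo_cond} with exponent $\alpha+\beta$. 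The commutator statement then follows automatically, because the leading symbols $ab$ and $ba$ of $AB$ and $BA$ coincide (pointwise scalar multiplication commutes) and thus cancel in $[A,B]$.

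The main difficulty lies in the composition step, i.e.\ passing from the double-indexed expression for $ABe_n$ to a single symbol $c(t,n)$ and verifying the symbol estimates with the announced order. This requires careful bookkeeping of how the difference operator $\omega$ interacts with multiplication by $e_n$, and control of the remainder terms in the resulting asymptotic expansion. This is a standard but technical computation in periodic pseudo-differential calculus; for the explicit asymptotic formula and the verification of all remainder estimates we refer to Sections~5.8 and~5.9 of \cite{saranen2013periodic}.
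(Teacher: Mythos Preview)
Your proposal is correct. The paper itself gives no argument for this proposition and simply refers to Sections~5.8 and~5.9 of \cite{saranen2013periodic}; your reduction to $\bT$ via the intertwiner $U$ together with the sketched Fourier/Schur-test proof of \ref{item:extension} and the symbol-calculus outline for \ref{itm:22} is consistent with that reference, and in fact more detailed than what the paper provides.
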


\subsubsection{The operator $\Lambda^{\alpha}$}
We describe an example of pseudo-differential operator that
is useful for our purposes.
For $\alpha \in \R$, consider the operator $\Lambda^\alpha$ on $C^\infty(\Sigma)$
\begin{equation}\label{eq:defn.Lambda}
  \Lambda^{\alpha}:=   U^{-1}
  L^{\alpha} U,
  \quad \text{ with }
  L^\alpha u(x) = \sum_{n\in \Z} (1 + \abs{n})^{\frac{\alpha}{2}}
  \widehat{u}(n) e_n(t),
  \quad u \in \cD(\bT).
\end{equation}
Thanks to \eqref{eq:pseudo_cond} and \eqref{eq:defn.Lambda}, one can show that $\Lambda^{\alpha} \in \Psi_\Sigma^{\frac{\alpha}{2}}$. 
Due to \Cref{prop:pseudoimpo} \ref{item:extension}, $\Lambda^\alpha$
extends uniquely to a bounded linear operator from $H^{s}(\Sigma)$ to
$H^{s-\frac{\alpha}{2}}(\Sigma)$, for any $s\in \R$; and  such extension is in fact an isomorphism, by the
definition of $H^s(\Sigma)$.
Of course, $\Lambda^\alpha$ can also be seen as an unbounded operator
on $H^s(\Sigma)$, for all $s\in \R$.

In particular, the operator $\Lambda:=\Lambda^1$ is used repeatedly in
the paper, and its action on vector valued functions is understood component-wise.
It is useful to remark that for all $\phi, \psi \in L^2(\Sigma)$ we
have $\Lambda \phi \in H^{-\frac12}(\Sigma), \Lambda^{-1} \psi \in H^{\frac12}(\Sigma),
$ and 
\begin{equation}\label{eq:Lambda.duality} 
  \langle \Lambda \phi, \Lambda^{-1} \psi \rangle_{H^{-1/2},H^{1/2}} = \langle
  \phi, \psi \rangle_{L^2}.
\end{equation}
\subsection{Trace operators}\label{sec:trace}
For any open set ${U} \subset \R^2$, recall the definition of
$H(\sigma,{U})$ in \eqref{eq:defn.HsigmaOmega}.
Thanks to \cite[Lemma 2.2]{benguria2017self}, $H(\sigma,{U}) \subset L^2({U};\C^2)\cap H^1_{loc}(U)$ and it is a
Hilbert space, endowed with the norm
\begin{equation*}
\norm{f}_{H(\sigma,{U})}^2 =
\norm{f}_{L^2({U};\C^2)}^2 +
\norm{-i\sigma\cdot \nabla f}_{L^2({U};\C^2)}^2.
\end{equation*}
We recall that $\Omega \subset \R^2$ is a bounded open simply connected set with $C^{\infty}$
boundary $\Sigma := \partial\Omega$, and 
$\R^2 = \Omega_+ \cup \Sigma \cup \Omega_-$, where
$\Omega_+ := \Omega$ and $\Omega_-:=\R^2 \setminus \overline{\Omega_+}$.
Thanks to \cite[Lemma 2.3, Lemma
2.4]{benguria2017self} (see also \cite[Lemma 15, Lemma
18]{antunes2020variational}
), the Dirichlet trace operators
\begin{equation*}
\cT_{\pm,0}^D : H^1(\Omega_\pm;\C^2) \to  H^{\frac12}(\Sigma;\C^2)
\end{equation*}
extend uniquely to the bounded linear operators
\begin{equation*}
\cT_\pm^D : H(\sigma,\Omega_\pm) \to H^{-\frac12}(\Sigma;\C^2).
\end{equation*}
and the following holds:
\begin{proposition}\label{prop:trace.H12}
For $f \in
H(\sigma,\Omega_\pm)$,
$\cT_\pm^D f \in H^{\frac12}(\Sigma;\C^2)$ if and only if $f
\in H^1(\Omega_\pm;\C^2)$. 
\end{proposition}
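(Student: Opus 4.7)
The plan is to split the equivalence into its two implications. The easy direction ($f \in H^1(\Omega_\pm;\C^2) \Rightarrow \cT_\pm^D f \in H^{1/2}(\Sigma;\C^2)$) is immediate from the classical trace theorem applied to $H^1$, together with the fact (built into the extension procedure of \cite{benguria2017self}) that $\cT_\pm^D$ restricted to $H^1(\Omega_\pm;\C^2)$ coincides with the standard Dirichlet trace $\cT_{\pm,0}^D$. The whole content of the proposition is the converse: from Dirac regularity ($\sigma \cdot \nabla f \in L^2$) plus $H^{1/2}$ boundary data one recovers $H^1$ interior regularity.

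The starting observation for the hard direction is that the Dirac operator is elliptic of order one on the whole plane: in Fourier space $\|\sigma\cdot \nabla f\|_{L^2(\R^2;\C^2)}^2 = \int |\xi|^2 |\widehat{f}(\xi)|^2\,d\xi$, because $(\sigma\cdot\xi)^2 = |\xi|^2 \bI_2$. Consequently $H(\sigma,\R^2) = H^1(\R^2;\C^2)$ with equivalent norms, and this is the identity I would like to push down to each side of $\Sigma$. The strategy is a lift-and-subtract trick. Given $f \in H(\sigma,\Omega_+)$ with $\cT_+^D f \in H^{1/2}(\Sigma;\C^2)$, I use surjectivity of the classical trace $H^1(\R^2;\C^2) \to H^{1/2}(\Sigma;\C^2)$ to pick $F \in H^1(\R^2;\C^2)$ with $\cT_+^D(F\upharpoonright_{\Omega_+}) = \cT_+^D f$. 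Set $g := f - F\upharpoonright_{\Omega_+}$; then $g \in H(\sigma,\Omega_+)$ with zero Dirichlet trace at $\Sigma$ in $H^{-1/2}(\Sigma;\C^2)$.

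Next I extend $g$ by zero to $\widetilde g \in L^2(\R^2;\C^2)$ and check that $\widetilde g \in H(\sigma,\R^2)$. For any $\varphi \in C_c^\infty(\R^2;\C^2)$, the distributional identity
\begin{equation*}
\langle -i\sigma\cdot\nabla \widetilde g,\varphi \rangle_{\cD',\cD} = \int_{\Omega_+} (-i\sigma\cdot\nabla g)\cdot \overline{\varphi}\,dx - \langle -i(\sigma\cdot\mathbf n)\cT_+^D g, \cT^D \varphi\rangle_{H^{-1/2},H^{1/2}}
\end{equation*}
follows from the Green identity for $H(\sigma,\Omega_+)$ established in \cite{benguria2017self}, and the boundary term vanishes because $\cT_+^D g = 0$. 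Hence $-i\sigma\cdot\nabla \widetilde g$ equals the $L^2$-function $(-i\sigma\cdot\nabla g)$ extended by zero, so $\widetilde g \in H(\sigma,\R^2) = H^1(\R^2;\C^2)$, from which $g \in H^1(\Omega_+;\C^2)$ and then $f = g + F\upharpoonright_{\Omega_+} \in H^1(\Omega_+;\C^2)$. The case $\Omega_-$ is identical modulo the sign of the unit normal in the Green identity.

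The main obstacle I anticipate is the rigorous justification of the Green identity for elements of $H(\sigma,\Omega_\pm)$ (which is needed twice: once to define $\cT_\pm^D$ continuously to $H^{-1/2}(\Sigma;\C^2)$, and once to conclude that the zero-extension $\widetilde g$ has $\sigma\cdot\nabla$ in $L^2$). For $H^1$ functions this is routine; for the larger space $H(\sigma,\Omega_\pm)$ one either invokes the extension of \cite{benguria2017self} directly or proves it by approximating $g$ in the graph norm by smooth functions whose trace is kept in $H^{-1/2}$, which is the delicate step.
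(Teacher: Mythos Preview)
The paper does not supply its own proof of this proposition: it is stated immediately after the sentence citing \cite[Lemma~2.3, Lemma~2.4]{benguria2017self} (and \cite[Lemma~15, Lemma~18]{antunes2020variational}) and is taken as a known result from those references. Your argument is correct and is precisely the standard one underlying those lemmas: the easy direction is the classical trace theorem, and for the converse you lift the boundary datum to some $F\in H^1(\R^2;\C^2)$, reduce to the zero-trace case $g=f-F\upharpoonright_{\Omega_\pm}$, and use the Green identity for $H(\sigma,\Omega_\pm)$ (which is exactly what \cite{benguria2017self} establishes) to conclude that the extension of $g$ by zero lies in $H(\sigma,\R^2)=H^1(\R^2;\C^2)$. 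The only delicate point, which you correctly flag, is that the Green identity must be available at the level of $H(\sigma,\Omega_\pm)$ with the trace interpreted in $H^{-1/2}(\Sigma;\C^2)$; this is the content of \cite[Lemma~2.4]{benguria2017self}, so invoking it is legitimate and closes the argument.
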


\subsection{Theory of the boundary triples}
\label{sec:triples}
In this section we review the theory of the boundary triples,
referring to \cite{behrndt2019two,bruning2008spectra,derkach1991generalized, posilicano2007self} and to the
monographs \cite{behrndtboundary,schmudgen2012unbounded} for details.

We start with the definition of a {boundary triple} for a symmetric operator.
\begin{definition}\label{defn:boundary.triple}
  Let $A$ be a closed densely defined symmetric operator in a Hilbert
  space $\mathcal{H}$.
  Moreover, let $\mathcal{G}$ be another Hilbert space and 
  $\Gamma_0,\Gamma_1: \dom A^* \to \mathcal{G}$ be linear maps.
  The triple $\{\mathcal{G},\Gamma_0,\Gamma_1\}$ is a
  \emph{boundary triple} for $A^*$ if and only if
  \begin{enumerate}[label=$({\roman*})$]
  \item\label{item:boundary.triple.1} for all $f, g \in \dom A^*$ there holds
    \begin{equation*}
      \langle A^* f, g \rangle_\mathcal{H}
      - \langle f, A^* g \rangle_\mathcal{H}
      = \langle \Gamma_1 f, \Gamma_0 g\rangle_\mathcal{G}
      - \langle\Gamma_0 f, \Gamma_1 g \rangle_\mathcal{G};
    \end{equation*}
  \item\label{item:boundary.triple.2} the map $ f \in \dom A^* \mapsto (\Gamma_0 f, \Gamma_1 f )\in \mathcal{G}\times\mathcal{G}$ is surjective. 
  \end{enumerate}
\end{definition}
Let $\{\mathcal{G},\Gamma_0,\Gamma_1\}$ be a boundary triple for the adjoint $A^*$ of a
densely defined closed symmetric operator $A$ on a Hilbert space
$\mathcal{H}$. Then $B:= A^*\upharpoonright\ker\Gamma_0$ is
self-adjoint, and for any $z \in \rho(B)$, one has the direct sum
decomposition
\begin{equation*}
  \dom A^* = \dom B \mathop{\dot{+}} \ker (A^* -z) =
  \ker \Gamma_0 \mathop{\dot{+}} \ker (A^* -z).
\end{equation*}
In particular,  $\Gamma_0\upharpoonright \ker(A^* -z)$
is bijective. We define the $\gamma$--field $G_z$ and the Weyl function
$M_z$ associated to the triple $\{\mathcal{G},\Gamma_0,\Gamma_1\}$:
\begin{align}
  \label{eq:abstract.gamma.function}
  &G_z : z \in \rho (B) \mapsto \big(\Gamma_0 \upharpoonright \ker
                    (A^*-z)\big)^{-1}:\mathcal{G} \to \mathcal{H},\\
  \label{eq:abstract.weyl.function}
  &M_z : z \in \rho (B) \mapsto 
                     \Gamma_1 \,G_z:\mathcal{G}\to \mathcal{G}.
\end{align}
For $z\in\rho (B)$ the operators $G_z$ and $M_z$ are bounded,
and $z\mapsto G_z$ and $z\mapsto M_z$ are holomorphic on $\rho (B)$.
{Furthermore}, the adjoints of $G_z$ and $M_z$ are given by
\begin{equation*} 
  G_z^* = \Gamma_1 (B - \overline z)^{-1} \quad \text{and} \quad M_z^* = M_{\bar{z}}.
\end{equation*}

  For $A$ a closed densely defined symmetric operator in a Hilbert
  space $\mathcal{H}$, the knowledge of a boundary triple for the operator $A^*$ allows to move
the study of its self-adjoint restrictions and their spectral properties to the
(sometimes) easier setting of the Hilbert space $\mathcal{G}$. This is
shown in the next proposition, for which we need to introduce some
notation. 
Let $\mathcal{G}_{\Pi}$ be a closed subspace of $\mathcal{G}$, viewed
as a Hilbert space when endowed with the induced inner product.
Denote the projection and the canonical embedding as
\begin{equation*}
  \Pi : \mathcal{G} \to \mathcal{G}_{\Pi} \quad \text{and} \quad
  \Pi^* :  \mathcal{G}_{\Pi} \to \mathcal{G},
\end{equation*}
respectively.
Let $\Theta$ be a linear operator in  $\mathcal{G}_{\Pi}$.
We define the operator $B_{\Pi,\Theta} := A^* \upharpoonright \dom
B_{\Pi,\Theta}$, where
\begin{equation}\label{eq:domain.project.abstract}
  \dom B_{\Pi,\Theta} 
  := \{f \in \dom A^* \mid
  (\bI - \Pi^* \Pi)\Gamma_0 f = 0, \,
  \Pi \Gamma_0 f \in \dom \Theta, \,
  \Pi \Gamma_1 f = \Theta \Pi \Gamma_0 f
\}.
\end{equation}
\begin{theorem}[{\cite[Theorem 2.12]{behrndt2019two}}]
  \label{thm:boundary.triple.abstract}
	The operator $B_{\Pi, \Theta}$ is (essentially) self-adjoint in $\mathcal{H}$
  if and only if $\Theta$ is  (essentially) self-adjoint in $\mathcal{G}$. 
  Furthermore, if $\Theta$ is self-adjoint and $z \in \rho (B)$, then
  the following assertions hold: 
  \begin{enumerate}[label=$({\roman*})$]
    \item\label{item:abstract.spectrum} $z\in \sigma (B_{\Pi,\Theta})$ if and only if $0 \in \sigma( \Theta-\Pi M_z\Pi^*)$;
    \item\label{item:abstract.point.spectrum} $z\in \sigma_\textup{p} (B_{\Pi,\Theta})$ if and only if $0 \in \sigma_\textup{p}( \Theta-\Pi M_z\Pi^*)$, and in that case
		 $\ker (B_{\Pi,\Theta}-z) = G_z \Pi^* \ker  ( \Theta-\Pi M_z\Pi^*)$;
    \item\label{item:abstract.Krein} for all $z \in \rho (B_{\Pi, \Theta}) \cap \rho (B)$ one has
    \begin{equation*} 
        (B_{\Pi,\Theta}-z)^{-1}=(B-z)^{-1}+G_z\Pi^*( \Theta-\Pi M_z\Pi^*)^{-1} \Pi G_{\Bar{z}}^*.
    \end{equation*}
  \end{enumerate}
\end{theorem}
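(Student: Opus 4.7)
The plan is to reduce every assertion to an algebraic computation in $\mathcal{G}_\Pi$ by systematically using three ingredients: the abstract Green's identity of \ref{item:boundary.triple.1}, the surjectivity of $(\Gamma_0,\Gamma_1):\dom A^*\to\mathcal{G}\times\mathcal{G}$ guaranteed by \ref{item:boundary.triple.2}, and the direct sum decomposition $\dom A^* = \ker\Gamma_0 \mathop{\dot{+}} \ker(A^*-z)$ for $z\in\rho(B)$.

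For the self-adjointness statement, first I would check that $B_{\Pi,\Theta}$ is symmetric when $\Theta=\Theta^*$: for $f,g\in\dom B_{\Pi,\Theta}$ the constraint $(\bI-\Pi^*\Pi)\Gamma_0 f=0$ gives $\Gamma_0 f = \Pi^*\Pi\Gamma_0 f$, and combined with $\Pi\Gamma_1 f = \Theta\Pi\Gamma_0 f$ (and analogously for $g$) the Green's identity collapses to $\langle\Theta\Pi\Gamma_0 f,\Pi\Gamma_0 g\rangle_{\mathcal{G}_\Pi} - \langle\Pi\Gamma_0 f,\Theta\Pi\Gamma_0 g\rangle_{\mathcal{G}_\Pi}$, which vanishes by self-adjointness of $\Theta$. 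For maximality, note that $A\subseteq B_{\Pi,\Theta}$ forces $B_{\Pi,\Theta}^*\subseteq A^*$, so any $g\in\dom B_{\Pi,\Theta}^*$ automatically lies in $\dom A^*$. Using the surjectivity, for any $\varphi\in\dom\Theta$ and $\psi\in\ker\Pi$ I would produce a test element $f\in\dom B_{\Pi,\Theta}$ realizing $(\Gamma_0 f,\Gamma_1 f)=(\Pi^*\varphi,\Pi^*\Theta\varphi+\psi)$. Inserting this into the Green's identity $\langle\Gamma_1 f,\Gamma_0 g\rangle_{\mathcal{G}} - \langle\Gamma_0 f,\Gamma_1 g\rangle_{\mathcal{G}} = 0$, first taking $\varphi=0$ with $\psi$ arbitrary yields $(\bI-\Pi^*\Pi)\Gamma_0 g=0$, and then taking $\psi=0$ with $\varphi$ arbitrary yields $\Pi\Gamma_0 g\in\dom\Theta^*=\dom\Theta$ together with $\Pi\Gamma_1 g = \Theta\Pi\Gamma_0 g$, hence $g\in\dom B_{\Pi,\Theta}$. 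The essentially self-adjoint case is identical after replacing $\Theta$ by its closure; the converse follows because the same computations in fact identify $B_{\Pi,\Theta}^* = B_{\Pi,\Theta^*}$.

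For \ref{item:abstract.Krein}, fix $z\in\rho(B)\cap\rho(B_{\Pi,\Theta})$ and $g\in\mathcal{H}$, set $f:=(B_{\Pi,\Theta}-z)^{-1}g$, and decompose $f=f_B+f_z$ with $f_B = (B-z)^{-1}g\in\ker\Gamma_0$ and $f_z\in\ker(A^*-z)$. The defining properties of the $\gamma$-field and the Weyl function give $f_z = G_z\Gamma_0 f$, $\Gamma_1 f_z = M_z\Gamma_0 f$, and $\Gamma_1 f_B = G_{\bar z}^* g$. Writing $\phi:=\Pi\Gamma_0 f$ and imposing the two boundary conditions produces the single equation
\[
(\Theta-\Pi M_z\Pi^*)\phi = \Pi G_{\bar z}^* g
\]
on $\mathcal{G}_\Pi$, and inverting it and substituting $\phi$ back into $f = f_B + G_z\Pi^*\phi$ produces exactly the Krein formula of \ref{item:abstract.Krein}. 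Reading off this identity gives \ref{item:abstract.spectrum}: the resolvent of $B_{\Pi,\Theta}$ at $z$ exists precisely when $\Theta-\Pi M_z\Pi^*$ is boundedly invertible on $\mathcal{G}_\Pi$. Finally, if $f\in\ker(B_{\Pi,\Theta}-z)$ then $f_B=0$ and $f=G_z\Pi^*\phi$ with $\phi\in\ker(\Theta-\Pi M_z\Pi^*)$, which establishes \ref{item:abstract.point.spectrum}.

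The main obstacle I anticipate is the maximality step in the self-adjointness part, where the surjectivity of $(\Gamma_0,\Gamma_1)$ must be deployed twice in order to decouple and separately probe the two boundary conditions on $g$. Once this decoupling is in place, every remaining statement reduces to mechanical algebra on $\mathcal{G}_\Pi$ driven by the Krein identity.
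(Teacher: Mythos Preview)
The paper does not prove this theorem; it is quoted verbatim from \cite[Theorem~2.12]{behrndt2019two} and used as a black box. Your proposal is therefore not comparable to anything in the present paper, but it is the standard boundary-triple argument and is essentially correct: the symmetry computation, the two-step probing of $\dom B_{\Pi,\Theta}^*$ via surjectivity of $(\Gamma_0,\Gamma_1)$, and the Krein-formula derivation through the decomposition $f=(B-z)^{-1}g+G_z\Gamma_0 f$ are exactly how such results are established in, e.g., \cite{bruning2008spectra,derkach1991generalized,behrndtboundary}.

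One point deserves a little more care in your sketch of \ref{item:abstract.spectrum}: from the Krein identity you immediately get that bounded invertibility of $\Theta-\Pi M_z\Pi^*$ implies $z\in\rho(B_{\Pi,\Theta})$, but the converse requires you to argue that $\Pi G_{\bar z}^*:\mathcal{H}\to\mathcal{G}_\Pi$ is surjective (to get surjectivity of $\Theta-\Pi M_z\Pi^*$) and that $G_z\Pi^*$ is injective (to get injectivity). Both follow from $G_{\bar z}^*G_z = \Gamma_1 G_z = M_z$ being part of a boundary triple and from the fact that $G_z$ is a bijection from $\mathcal{G}$ onto $\ker(A^*-z)$, but you should make this explicit rather than simply ``reading it off'' the resolvent formula.
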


\subsection{The free Dirac operator}\label{sec:free.dirac}
Recall that the free Dirac operator $\ccD_0$  is defined as follows: 
\begin{equation*}
      \ccD_0 f := D_0 f, \quad      \dom \ccD_0 := H^1(\R^2;\C^2).
 \end{equation*}
For any $z \in \rho(\ccD_0) =\C\setminus((-\infty,-|m|]\cup[|m|,+\infty))$ 
we have
\begin{equation*}
  (\ccD_0 - z)^{-1} f(x) =
  \int_{\R^2} \phi_z(x-y) f(y)\,dy,
  \quad f \in L^2(\R^2;\C^2),
\end{equation*}
where the Green function $\phi_z$ is given for $x \neq 0$ by 
\begin{equation} \label{def_Green_function}
  \phi_z(x)
  := \frac1{2\pi}K_0\big(\sqrt{m^2 -z^2}|x|\big)\big(m\sigma_3 + z\bI_2\big)
  +
  i\Big(\sigma\cdot \frac{x}{|x|}\Big)\frac{\sqrt{m^2 - z^2}}{2\pi} K_1\big(\sqrt{m^2 - z^2}|x|\big),
\end{equation}
the functions $K_j$ are the modified Bessel functions of the second kind of
order $j$ and we are taking the principal
square root function, \ie  for $z \in \C\setminus (-\infty,0]$, $\Re \sqrt{z} > 0$.

We denote $S$ the
restriction of $\ccD_0$ to the functions vanishing at $\Sigma$, \ie
\begin{equation}\label{eq:defn.S}
  S f = (-i\sigma\cdot \nabla + m\sigma_3) f,
  \quad \dom S = H_0^1 (\R^2 \setminus \Sigma;\C^2).
\end{equation}
It is easy to see that $S^*$
is the maximal realization of $D_0$ in
$\R^2 \setminus \Sigma$, i.e., for $f = f_+ \oplus f_- \in
L^2(\R^2;\C^2) \equiv L^2(\Omega_+;\C^2) \oplus L^2(\Omega_-;\C^2)$,
\begin{equation}\label{eq:defn.S*}
  \begin{split}
  &\dom S^* = \{f = f_+ \oplus f_- \in L^2(\Omega_+;\C^2) \oplus
  L^2(\Omega_-;\C^2) \mid f_\pm \in H(\sigma,\Omega_\pm)\},\\
  &S^* f = (-i\sigma\cdot \nabla + m\sigma_3) f_+ \oplus
  (-i\sigma\cdot \nabla + m\sigma_3) f_-.
\end{split}
\end{equation}
 We finally recall some properties of the essential spectrum of any
 self-adjoint extension of $S$.
 \begin{proposition}[{\cite[Propositions~3.8~and~3.9]{behrndt2019two}}]
   \label{prop:extensions.spectral.properties}
  Let $A$ be a self-adjoint extension of $S$. Then the following hold:
  \begin{enumerate}[label=$({\roman*})$]
  \item\label{item:extensions.spectral.properties.1} $(-\infty,-\abs{m}] \cup [\abs{m},+\infty)
    \subset \sigma_{ess}(A)$;
  \item\label{item:extensions.spectral.properties.2} if $\dom A \subset H^s(\R^2\setminus\Sigma;\C^2)$ for some
    $s>0$, then the spectrum of $A$ in $(-\abs{m},\abs{m})$ is purely
    discrete and finite.
  \end{enumerate}
\end{proposition}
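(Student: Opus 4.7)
\emph{Part (i).} For each $\lambda\in\R$ with $|\lambda|>|m|$, I would construct a singular Weyl sequence $\{f_n\}\subset\dom S\subset\dom A$, forcing $\lambda\in\sigma_{ess}(A)$. Since $\lambda^2>m^2$, pick $k\in\R^2$ with $|k|^2=\lambda^2-m^2$ and a spinor $u\in\C^2$ solving $(\sigma\cdot k+m\sigma_3)u=\lambda u$, so that the plane wave $\psi(x):=e^{ik\cdot x}u$ satisfies $D_0\psi=\lambda\psi$ on $\R^2$. Fix $\chi\in C_c^\infty(\R^2)$ with $\chi\equiv 1$ on $B(0,1)$ and $\supp\chi\subset B(0,2)$, and pick centres $x_n\in\R^2$ and radii $R_n\to\infty$ with $|x_n|-2R_n\to\infty$, so that (by compactness of $\Sigma$) eventually $B(x_n,2R_n)\cap\Sigma=\emptyset$. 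Setting
\begin{equation*}
f_n(x):=c_n\,\chi\bigl((x-x_n)/R_n\bigr)\psi(x),
\end{equation*}
with $c_n>0$ chosen so that $\|f_n\|_{L^2}=1$, one checks $f_n\in C_c^\infty(\R^2\setminus\Sigma;\C^2)\subset\dom S$, $f_n\rightharpoonup 0$, and $\|(A-\lambda)f_n\|_{L^2}=\|(S-\lambda)f_n\|_{L^2}\lesssim R_n^{-1}\to 0$ (only the commutator with $\chi$ survives). By closedness of $\sigma_{ess}(A)$ the inclusion then extends to $\pm|m|$.

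\emph{Part (ii).} My plan is to establish the compactness of the resolvent difference
\begin{equation*}
R_z:=(A-z)^{-1}-(\ccD_0-z)^{-1},\qquad z\in\rho(A)\cap\rho(\ccD_0),
\end{equation*}
on $L^2(\R^2;\C^2)$; combined with Part (i), Weyl's essential-spectrum theorem then gives $\sigma_{ess}(A)=\sigma_{ess}(\ccD_0)=(-\infty,-|m|]\cup[|m|,+\infty)$, so $\sigma(A)\cap(-|m|,|m|)$ consists of isolated eigenvalues of finite multiplicity. For any $f\in L^2$, $u:=R_zf$ lies in $\dom A\cap\ker(S^*-z)$, so by the hypothesis $u\in H^s(\R^2\setminus\Sigma;\C^2)$ and $(D_0-z)u=0$ on each $\Omega_\pm$. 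Choosing $z$ off the real cuts (e.g.\ $z=i$), the exponential decay of the Green function $\phi_z$ in \eqref{def_Green_function} transfers to $u$ on the exterior component $\Omega_-$. Combining this global decay at infinity with the local $H^s$-regularity via a cut-off decomposition and the Rellich--Kondrachov embedding yields compactness of $R_z$ on $L^2$.

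\emph{Finiteness and the main obstacle.} The delicate point is to rule out accumulation of eigenvalues at $\pm|m|$, since compactness of $R_z$ alone only yields discreteness and finite multiplicity inside the open gap. I would reformulate the eigenvalue problem as a Birman--Schwinger-type equation on $\Sigma$: any eigenfunction $g$ of $A$ with eigenvalue $\mu\in(-|m|,|m|)$ decays exponentially on $\Omega_-$ and is $H^s$-regular, so its trace $\cT_\pm^D g$ satisfies $(I-K(\mu))\cT_\pm^D g=0$ for a norm-continuous family of compact operators $K(\mu)$ on an appropriate Sobolev space over $\Sigma$, encoding both the boundary condition defining $A$ and the free Dirac resolvent on $\Omega_\pm$. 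Analytic Fredholm theory for the holomorphic family $\mu\mapsto I-K(\mu)$ then controls the zero set and gives finiteness in $(-|m|,|m|)$. The principal difficulty throughout is the compactness of $R_z$: the embedding $H^s(\R^2\setminus\Sigma)\hookrightarrow L^2(\R^2)$ is \emph{not} compact on an unbounded domain, and one must genuinely exploit the spectral-gap decay of $\phi_z$ to upgrade the local regularity to global compactness.
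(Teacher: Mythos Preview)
The paper does not give its own proof of this proposition; it is stated as a citation of \cite[Propositions~3.8~and~3.9]{behrndt2019two}. So there is no in-paper argument to compare against, and I assess your proposal on its own merits.

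Your Part~(i) is correct and is the standard Weyl-sequence construction; this is almost certainly what \cite{behrndt2019two} does as well.

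For Part~(ii) there are two genuine gaps. First, a small slip: $u=R_zf$ is \emph{not} in $\dom A$ in general, since $(\ccD_0-z)^{-1}f\in\dom\ccD_0$ need not lie in $\dom A$. This is harmless for the regularity claim (both summands are in $H^{\min(s,1)}(\R^2\setminus\Sigma)$), but your compactness argument as written is only heuristic: you need \emph{uniform} exponential decay of $u$ on $\Omega_-$ in terms of $\|f\|_{L^2}$, not just decay for each fixed $u$, and you have not said how to get it. One clean way is to observe that $u\in\ker(S^*-z)$ implies $u=\Phi_z\varphi$ for some $\varphi\in H^{-1/2}(\Sigma;\C^2)$, and then use the explicit kernel $\phi_z$ to get uniform weighted bounds.

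Second, and more seriously, your finiteness argument does not close. Analytic Fredholm theory for a family $I-K(\mu)$ on the \emph{open} interval $(-|m|,|m|)$ gives discreteness of the zero set in that open interval but says nothing about accumulation at $\pm|m|$ unless you can extend $K(\mu)$ continuously (in operator norm, as compact operators) to a neighbourhood of the closed interval. As $\mu\to\pm|m|$ the Green function $\phi_\mu$ loses its exponential decay, so the operators $K(\mu)$ you describe will typically \emph{not} extend. To get finiteness you need a different mechanism: in \cite{behrndt2019two} this is handled by showing that the resolvent difference is not merely compact but lies in a Schatten class (indeed, is finite-rank up to a trace-class remainder via the Krein formula and the fact that $\Sigma$ is a compact one-dimensional manifold), which gives a bound on the number of eigenvalues in the gap via a Birman--Schwinger count. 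Your sketch does not supply any such quantitative control.
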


\subsection{Auxiliary integral operators}
\label{sec:auxiliary}
We introduce now several integral operators related to the Green
function $\phi_z$.

Let us denote the Dirichlet trace operator in $H^1(\R^2; \C^2)$  on $\Sigma$
by $\mathcal{T}^D: H^1(\R^2; \C^2) \to H^{\frac12}(\Sigma; \C^2)$. 
  It is well known that $\mathcal{T}^D$ is bounded, surjective, and
  $\ker \mathcal{T}^D = H^1_0(\R^2 \setminus \Sigma; \C^2)$, see
   \cite[Theorems~3.37~and~3.40]{mclean2000strongly}.
For $z \in \rho(\ccD_0)$ we define
\begin{equation} \label{def_Phi_z_prime}
  \Phi_z' := \mathcal{T}^D (\ccD_0 - \Bar{z})^{-1}: L^2(\mathbb{R}^2; \mathbb{C}^2) \to H^{\frac{1}{2}}(\Sigma; \mathbb{C}^2)
\end{equation}
and its anti-dual
\begin{equation} \label{def_Phi_z}
  \Phi_z := \big( \mathcal{T}^D (\ccD_0 - \overline{z})^{-1} \big)': H^{- \frac{1}{2}}(\Sigma; \mathbb{C}^2) \rightarrow L^2(\mathbb{R}^2; \mathbb{C}^2).
\end{equation}
The potential operator $\Phi_z$ is a bounded bijective operator from
  $H^{-\frac{1}{2}}(\Sigma; \mathbb{C}^2)$ onto  $\ker (S^* - z)$. 
  Moreover, for $\varphi \in L^2(\Sigma; \mathbb{C}^2)$ one has the
  integral representation
  \begin{equation*}
    \Phi_z \varphi (x) = \int_\Sigma \phi_z(x-y) \varphi(y) \, ds(y) \quad \text{for a.e. }x \in \mathbb{R}^2\setminus\Sigma.
  \end{equation*}
  
  We denote  $C_\Sigma$ the
  \emph{Cauchy transform} on $\Sigma$. To define it, we identify $\mathbb{R}^2 \sim \C$: writing
  $\mathbb{R}^2 \ni x=(x_1, x_2) \sim x_1 + \rmi x_2 =: \xi \in
  \mathbb{C}$ and $\mathbb{R}^2 \ni y=(y_1, y_2) \sim y_1 + \rmi y_2
  =: \zeta \in \mathbb{C}$. Then we set
  \begin{equation} \label{def_Cauchy_transform}
    C_\Sigma u(\xi) := \frac{\rmi}{\pi}\, \text{p.v. } \int_\Sigma \frac{u(\zeta)}{\xi - \zeta} \text{d} \zeta, 
    \quad \text{ for all }u \in C^\infty(\Sigma),\, \xi \in \Sigma,
  \end{equation}
  where the complex line integral is understood as its principal
  value.
Furthermore, let $C_\Sigma'$ be the operator which satisfies $
(C_\Sigma u, v)_{L^2(\Sigma)} = (u, C_\Sigma'  v)_{L^2(\Sigma)}$ for
all $u, v \in C^\infty(\Sigma)$.
The periodic pseudodifferential operators $C_{\Sigma}, C_{\Sigma}'$ belong to $\Psi_\Sigma^0$ and give rise to bounded operators in $H^s(\Sigma)$, for all $s\in
\R$. Moreover, 
\begin{equation}\label{eq:psidiff_inf}
C_{\Sigma}' C_{\Sigma} - \bI,\,  C_{\Sigma} C_{\Sigma}' -
\bI,\, C_\Sigma-C_\Sigma' \in \Psi_{\Sigma}^{-\infty}
\end{equation}
(see \cite[Proposition 2.9]{behrndt2019two})
and $C_\Sigma^2 =C_\Sigma'^2 = \bI$ (see \cite[Lemma 4.2.3]{saranen2013periodic}).
  
For $z \in \rho(\ccD_0)$ we define the boundary integral operator
\begin{equation}\label{eq:defn.Cz}
  \mathcal{C}_{z} \, \varphi (x)
  :=
  \text{p.v.} \int_{\Sigma}
  \phi_z (x-y) \varphi(y)\, ds(y),
  \quad \text{ for a.e. }x \in \Sigma \text{ and for all }\varphi \in C^{\infty}(\Sigma;\C^2).
\end{equation}
The pseudodifferential operator $\mathcal{C}_z$ belongs to $\Psi_{\Sigma}^0$,
and, in particular, it gives rise to a bounded operator
in $H^s(\Sigma;\C^2)$, for any $s \in \R$. Its realization in
$L^2(\Sigma;\C^2)$ it satisfies $(\mathcal{C}_z)^* = \mathcal{C}_{\bar
  z}$.
Furthermore, for the tangent vector field $\mathbf{t} = ( {t}_1, {t}_2)$ along $\Sigma$ we denote
  \begin{equation}\label{eq:defn.T}
    T = {t}_1 + i {t}_2.
  \end{equation}
 Then one has
  \begin{equation} \label{Lambda_C_z_Lambda}
     \mathcal{C}_z =
    \dfrac{1}{2}\,
    \begin{pmatrix}
     0 &  C_\Sigma \overline{T} \\
      T C_\Sigma'  & 0
    \end{pmatrix}
    + \dfrac{\ell}{4\pi} 
    \begin{pmatrix} (z+m)\bI  & 0
      \\ 0 &  (z-m)\bI \end{pmatrix}
    \Lambda^{-2}
    +\widehat\Psi,
  \end{equation}
where $\ell$ is the length of $\Sigma$ and
$\widehat\Psi\in\Psi^{-2}_\Sigma$, see \cite[Proposition 3.4]{behrndt2019two}.

The operators $\Phi_z$ and $\mathcal{C}_z$ are related to each other
by the following relation, analogous in this context to
the Plemelj-Sokhotskii formula (see \cite[Proposition 3.5]{behrndt2019two}):
  \begin{equation*}
    \cT_\pm^D \Phi_z \varphi =
    \mp \frac{i}{2} (\sigma\cdot \mathbf{n}) \varphi
    +
    \mathcal{C}_z \varphi,
    \quad
    \text{ for all }
    \varphi \in H^{-\frac12}(\Sigma;\C^2).
  \end{equation*}

  \section{Unitary equivalences}
  \label{sec:gauge.w}
\subsection{Reduction to $\omega=0$}
Recall that the operator $\ccD_{\eta,\tau,\lambda,\omega}$
is defined as in~\eqref{eq:defn.D.shell.domain_w}, ~\eqref{eq:defn.D.shell_w}.
The purpose of this section is to show that, in many cases, 
$\ccD_{\eta,\tau,\lambda,\omega}$ is unitarily equivalent to 
$\ccD_{\tilde\eta,\tilde\tau,\tilde\lambda,0}$ for certain 
$\tilde\eta,\, \tilde\tau,\, \tilde\lambda: \Sigma \to\R$ defined in terms of
$\eta,\,\tau,\,\lambda$, and $\omega$. 
This unitary equivalence is based on the following simple transformation. Given $z\in\C$ such that $|z|=1$, let 
\begin{equation*}
U_z:L^2(\R^2;\C^2)\to L^2(\R^2;\C^2),\qquad U_z \varphi=\chi_{\Omega_+}\varphi+z\chi_{\Omega_-}\varphi,\quad\text{for all }\varphi\in L^2(\R^2;\C^2).
\end{equation*}
It is clear that $(U_z)^*=U_{\overline z}$ and, since $z\overline z=|z|^2=1$, that 
$(U_z)^*U_z=U_z(U_z)^*=\bI_2$. Hence $U_z$ is a unitary operator in $L^2(\R^2;\C^2)$. With this at hand, we can introduce the operator
\begin{equation*}
  \begin{split}
   &\dom(\ccD_{\eta,\tau,\lambda,\omega}^z) :=
   U_{\overline z} (\dom(\ccD_{\eta,\tau,\lambda,\omega})),\\
    &\ccD_{\eta,\tau,\lambda,\omega}^z f
    := U_{\overline z}\ccD_{\eta,\tau,\lambda,\omega} U_zf,
    \quad
    \text{for all }f \in \dom(\ccD_{\eta,\tau,\lambda,\omega}^z),
\end{split}
\end{equation*}
which is unitarily equivalent to $\ccD_{\eta,\tau,\lambda,\omega}$ by construction. 

Before addressing the proof of Theorem~\ref{thm:w.gauged},
let us make some observations on the values of $X$ and $z$,
which were introduced in \eqref{eq:solX} and \eqref{eq:def.z}, respectively, depending on $d$ and $\omega$.
\begin{itemize}
\item {\em Case $d=0$:} In this situation, \eqref{eq:solX} and \eqref{eq:def.z} rewrite as 
$$X=\frac{4}{4+\omega^2},\qquad z=\frac{4+\omega^2}{4-\omega^2+4\omega i}.$$
Therefore, we clearly have $X\in\R\setminus\{0\}$ and $z\in\C$ are constant. Also, to check that $|z|=1$ is straightforward.
\item {\em Case $d\neq0$ and $\omega=0$:} In this situation, \eqref{eq:solX} and \eqref{eq:def.z} rewrite as 
$$dX^2-4+(4-d)X=0,\qquad z=\frac{dX^2+4}{X(4+d)}.$$
Since the number of solutions of the first equation strongly depends on the values of $d$, we must distinguish two cases. On one hand, if $d\neq-4$ then we get 
$$X=\frac{1}{2d}\Big(d-4\pm\sqrt{(d-4)^2+16d}\Big)
=\frac{1}{2d}(d-4\pm|d+4|)$$
and, thus, the solutions to \eqref{eq:solX} are $X=1$ and $X=-4/d$. For $X=1$ we get $z=1$, and for $X=-4/d$ we get $z=-1$.
On the other hand, if $d=-4$ then \eqref{eq:solX} rewrites as $(X-1)^2=0$, hence the unique solution is $X=1$. In this case,  $z$ formally corresponds to 
$$z=\frac{dX^2+4}{X(4+d)}=\frac{d+4}{4+d}=1.$$
\item {\em Case $d\neq0$ and $\omega\neq0$:} From \eqref{eq:solX} we get
$$X=\frac{1}{2d}\Big(d-\omega^2-4\pm\sqrt{(d-\omega^2-4)^2+16d}\Big).$$
A simple computation shows that
$$(d-\omega^2-4)^2+16d
=(d-\omega^2+4)^2+16\omega^2\geq 16\omega^2>0
$$
and, therefore, $X$ can chosen as either one of the two real, nonzero,
and different solutions to \eqref{eq:solX}.
In this setting, it is clear that $z\in\C$ is constant because $X,\,\omega\neq0$.
Let us now show that $|z|=1$. We have
\begin{equation*}
\begin{split}
|z|^2&=\frac{(dX^2+4)^2}{X^2\big((4+d-\omega^2)^2+16\omega^2 \big)}
=\frac{(dX^2+4)^2}{X^2\big((-4+d-\omega^2)^2+16d \big)}\\
&=\frac{(dX^2+4)^2}{\big((d-\omega^2-4)X\big)^2+16dX^2}
=\frac{(dX^2+4)^2}{(dX^2-4)^2+16dX^2},
\end{split}
\end{equation*}
where we used \eqref{eq:solX} in the last equality above. Therefore,
\begin{equation*}
\begin{split}
|z|^2&=\frac{(dX^2+4)^2}{(dX^2-4)^2+16dX^2}
=\frac{(dX^2+4)^2}{(dX^2+4)^2}=1,
\end{split}
\end{equation*}
as desired.
\end{itemize}

We have checked that we always have $X\in\R\setminus\{0\}$, and $z\in\C$ always satisfies $|z|=1$. This shows the first statement of Theorem~\ref{thm:w.gauged}. The rest of the proof of Theorem~\ref{thm:w.gauged} strongly relies on the following result, which requires some notation. We set
\begin{equation}\label{def:M+-}
M^{\pm}_{\eta,\tau,\lambda,\omega}:= \pm i  (\sigma \cdot \mathbf{n}) + \frac{1}{2}
        \big( \eta \bI_2 + \tau \sigma_3 + \lambda (\sigma \cdot \mathbf{t})+ \omega(\sigma \cdot \mathbf{n})\big).
\end{equation}
Note that the boundary condition in \eqref{eq:defn.D.shell.domain_w} can be expressed as
$$M^{+}_{\eta,\tau,\lambda,\omega}\mathcal{T}_+^Df_+
=-M^{-}_{\eta,\tau,\lambda,\omega}\mathcal{T}_-^Df_-.$$

\begin{lemma}\label{lemma:key.gauge}
Given $\eta,\,\tau,\,\lambda,\,\omega\in\R$, let $X$ and $z$ be as in Theorem~\ref{thm:w.gauged}. If $(d,\omega)\neq(-4,0)$ then, for every $x\in\Sigma$,
$M^{\pm}_{\eta,\tau,\lambda,\omega}$ and 
$M^{\pm}_{X\eta,X\tau,X\lambda,0}$ are invertible matrices. Moreover,
\begin{equation}\label{eq:rel.initial.final.M}
(M^{+}_{\eta,\tau,\lambda,\omega})^{-1}
M^{-}_{\eta,\tau,\lambda,\omega}
(M^{-}_{X\eta,X\tau,X\lambda,0})^{-1}
M^{+}_{X\eta,X\tau,X\lambda,0}=\overline z\,\bI_2.
\end{equation}
\end{lemma}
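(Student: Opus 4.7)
My plan is to exploit the Clifford-algebraic structure generated by the three mutually anticommuting involutions $\sigma_3$, $\sigma\cdot\mathbf{t}$, $\sigma\cdot\mathbf{n}$ (see \eqref{eq:Pauli.squares}--\eqref{eq:sigma_n.t}). For any matrix of the form $A = a_0\bI_2 + a_1\sigma_3 + a_2(\sigma\cdot\mathbf{t}) + a_3(\sigma\cdot\mathbf{n})$ with $a_i\in\C$, define $A^\flat := a_0\bI_2 - a_1\sigma_3 - a_2(\sigma\cdot\mathbf{t}) - a_3(\sigma\cdot\mathbf{n})$ (not to be confused with complex conjugation); a direct computation using the anticommutation relations gives $A\,A^\flat = (a_0^2-a_1^2-a_2^2-a_3^2)\bI_2 = (\det A)\bI_2$, so $A^{-1} = A^\flat/\det A$ whenever $A$ is invertible. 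Applying this to our four matrices,
$$\det M^{\pm}_{\eta,\tau,\lambda,\omega} = \tfrac{1}{4}(d+4-\omega^2)\mp i\omega,\qquad \det M^{\pm}_{X\eta,X\tau,X\lambda,0} = \tfrac{1}{4}(X^2d+4).$$
The first vanishes only when $(d,\omega)=(-4,0)$, which is excluded. The second, if zero, would force $dX^2+4=0$ and hence $z=0$ in \eqref{eq:def.z}, contradicting $|z|=1$. This settles invertibility.

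Since $\det M^+_{X\eta,X\tau,X\lambda,0} = \det M^-_{X\eta,X\tau,X\lambda,0}$, the identity \eqref{eq:rel.initial.final.M} is equivalent to
$$M^-_{\eta,\tau,\lambda,\omega}\cdot(M^-_{X\eta,X\tau,X\lambda,0})^\flat = \overline{z}\,M^+_{\eta,\tau,\lambda,\omega}\cdot(M^+_{X\eta,X\tau,X\lambda,0})^\flat.$$
I would now set $V := \tfrac{1}{2}(\eta\bI_2+\tau\sigma_3+\lambda(\sigma\cdot\mathbf{t}))$ and $\delta^\pm := \tfrac{\omega}{2}\pm i$, so that $M^{\pm}_{\eta,\tau,\lambda,\omega} = V + \delta^\pm(\sigma\cdot\mathbf{n})$ and $(M^{\pm}_{X\eta,X\tau,X\lambda,0})^\flat = XV^\flat \mp i(\sigma\cdot\mathbf{n})$. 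Since $VV^\flat = \tfrac{d}{4}\bI_2$ and $V(\sigma\cdot\mathbf{n}) = (\sigma\cdot\mathbf{n})V^\flat =: J$ (the latter because $\sigma\cdot\mathbf{n}$ anticommutes with both $\sigma_3$ and $\sigma\cdot\mathbf{t}$), expanding gives
$$M^{\pm}_{\eta,\tau,\lambda,\omega}\cdot(M^{\pm}_{X\eta,X\tau,X\lambda,0})^\flat = \Bigl(\tfrac{Xd+4}{4}\mp \tfrac{i\omega}{2}\Bigr)\bI_2 + \Bigl(\tfrac{X\omega}{2}\pm (X-1)i\Bigr)J,$$
where I have already substituted the values of $\delta^\pm$ and simplified.

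Because $J\in\mathrm{span}(\sigma_1,\sigma_2,\sigma_3)$ has no $\bI_2$-component, whenever $J\neq 0$---equivalently $(\eta,\tau,\lambda)\neq(0,0,0)$---the pair $\{\bI_2,J\}$ is linearly independent, and the required identity reduces to two scalar conditions, one per basis element. The main obstacle I foresee is the purely computational verification that both scalar equations determine the same value of $\overline z$ and that this value coincides with the conjugate of \eqref{eq:def.z}: isolating $\overline z$ from the $\bI_2$-equation yields $\overline z = \frac{(Xd+4)+2i\omega}{(Xd+4)-2i\omega}$, from the $J$-equation $\overline z = \frac{X\omega - 2(X-1)i}{X\omega + 2(X-1)i}$, and cross-multiplying to equate them collapses precisely to the polynomial relation \eqref{eq:solX}; a second cross-multiplication against \eqref{eq:def.z} similarly reduces to \eqref{eq:solX}. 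The degenerate case $J=0$ forces $\eta=\tau=\lambda=0$ and hence $d=0$, in which case both products collapse to scalar multiples of $\bI_2$ and the identity can be checked directly using $X=4/(4+\omega^2)$ and the explicit value of $z$ displayed in the $d=0$ analysis preceding the lemma statement.
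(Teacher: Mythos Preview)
Your proof is correct and rests on the same algebraic device as the paper: both use the Clifford-type involution you call $A\mapsto A^\flat$ (the paper writes it as $M\mapsto\widetilde M$) to invert the matrices $M^\pm$, and both reduce the identity to the polynomial relation \eqref{eq:solX}.

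The organizational difference is in how the four factors are grouped. The paper first computes the two ``ratios''
\[
(M^{+}_{\eta,\tau,\lambda,\omega})^{-1}M^{-}_{\eta,\tau,\lambda,\omega}
\quad\text{and}\quad
(M^{-}_{X\eta,X\tau,X\lambda,0})^{-1}M^{+}_{X\eta,X\tau,X\lambda,0}
\]
separately, then multiplies them; the mixed terms in $\lambda\sigma_3-\eta i(\sigma\cdot\mathbf{n})-\tau(\sigma\cdot\mathbf{t})$ cancel by a single application of \eqref{eq:solX}, and the remaining scalar is seen to equal $\overline z$ directly. Your regrouping into $M^{\mp}_{\eta,\tau,\lambda,\omega}\,(M^{\mp}_{X\eta,X\tau,X\lambda,0})^\flat$ is equally valid but produces two scalar equations (one for $\bI_2$, one for $J$) that must be checked separately, plus a degenerate case $J=0$. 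The paper's grouping avoids this bookkeeping because the product is automatically a multiple of $\bI_2$. On the other hand, your version has the advantage of making the appeal to $|z|=1$ for the invertibility of $M^{\pm}_{X\eta,X\tau,X\lambda,0}$ completely transparent. Either route is fine; one minor point to tighten in yours is the edge case $(Xd+4,\omega)=(0,0)$ (which occurs for $\omega=0$, $X=-4/d$), where your $\bI_2$-equation becomes $0=0$ and the value of $\overline z$ must come from the $J$-equation alone.
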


\begin{proof}

We introduce the auxiliary matrix
\begin{equation*}
\widetilde M^{\pm}_{\eta,\tau,\lambda,\omega}:= \mp i  (\sigma \cdot \mathbf{n}) + \frac{1}{2}
        ( \eta \bI_2 - \tau \sigma_3 - \lambda (\sigma \cdot \mathbf{t})- \omega(\sigma \cdot \mathbf{n})).
\end{equation*}
Thanks to \eqref{eq:Pauli.squares}, \eqref{eq:sigma.t},
\begin{equation*}
\begin{split}
M^{\pm}_{\eta,\tau,\lambda,\omega}
\widetilde M^{\pm}_{\eta,\tau,\lambda,\omega}
&= \bigg(\!\pm i  (\sigma \cdot \mathbf{n}) + \frac{1}{2}
        ( \eta \bI_2 + \tau \sigma_3 + \lambda (\sigma \cdot \mathbf{t})+ \omega(\sigma \cdot \mathbf{n}))\bigg)\\
        &\hskip50pt
        \bigg(\!\mp i  (\sigma \cdot \mathbf{n}) + \frac{1}{2}
        ( \eta \bI_2 - \tau \sigma_3 - \lambda (\sigma \cdot \mathbf{t})- \omega(\sigma \cdot \mathbf{n}))\bigg)\\
        &=\bigg(1+\frac{1}{4}(\eta^2-\tau^2-\lambda^2-\omega^2)
        \mp\omega i\bigg)\bI_2
        =\frac{1}{4}(4+d-\omega^2\mp4\omega i)\bI_2.
\end{split}
\end{equation*}
Since $(d,\omega)\neq(-4,0)$, we see that 
$4+d-\omega^2\mp4\omega i\neq0$. Therefore, 
$M^{\pm}_{\eta,\tau,\lambda,\omega}$ is invertible whenever $(d,\omega)\neq(-4,0)$, and 
its inverse is given by
\begin{equation*}
\begin{split}
 (M^{\pm}_{\eta,\tau,\lambda,\omega})^{-1}
 &=\frac{4}{4+d-\omega^2\mp4\omega i}
 \,\widetilde M^{\pm}_{\eta,\tau,\lambda,\omega}\\
 &= \frac{4}{4+d-\omega^2\mp4\omega i}
 \bigg(\!\mp i  (\sigma \cdot \mathbf{n}) + \frac{1}{2}
        ( \eta \bI_2 - \tau \sigma_3 - \lambda (\sigma \cdot \mathbf{t})- \omega(\sigma \cdot \mathbf{n}))\bigg).
        \end{split}
\end{equation*}
A similar computation can be carried out to find
$(M^{\pm}_{X\eta,X\tau,X\lambda,0})^{-1}$. In this case, one gets
\begin{equation}\label{eq:def.invM.X}
\begin{split}
 (M^{\pm}_{X\eta,X\tau,X\lambda,0})^{-1}
 &= \frac{4}{4+dX^2}
 \bigg(\!\mp i  (\sigma \cdot \mathbf{n}) + \frac{X}{2}
        ( \eta \bI_2 - \tau \sigma_3 - \lambda (\sigma \cdot \mathbf{t}))\bigg).
        \end{split}
\end{equation}
Observe that, using \eqref{eq:solX}, that $(d,\omega)\neq(-4,0)$, and that $X\neq0$, we have
\begin{equation*}
\begin{split}
(dX^2+4)^2&=(dX^2-4)^2+16dX^2
=\big((d-\omega^2-4)X\big)^2+16dX^2\\
&=X^2\big((d-\omega^2-4)^2+16d \big)
=X^2\big((d-\omega^2+4)^2+16\omega^2 \big)>0.
\end{split}
\end{equation*}
Hence the right hand side of \eqref{eq:def.invM.X} is well defined. This shows that
$M^{\pm}_{X\eta,X\tau,X\lambda,0}$ is invertible whenever $(d,\omega)\neq(-4,0)$.

Let us address the proof of \eqref{eq:rel.initial.final.M}. The first step is to compute $(M_{\eta,\tau,\lambda,\omega}^{\pm})^{-1}M_{\eta,\tau,\lambda,\omega}^\mp$. We have
\begin{equation*}
\begin{split}
  (M^{\pm}_{\eta,\tau,\lambda,\omega}&)^{-1} 
 M_{\eta,\tau,\lambda,\omega}^\mp
 \\
 &= \frac{4}{4+d-\omega^2\mp4\omega i}
 \bigg(\!\mp i  (\sigma \cdot \mathbf{n}) + \frac{1}{2}
        ( \eta \bI_2 - \tau \sigma_3 - \lambda (\sigma \cdot \mathbf{t})- \omega(\sigma \cdot \mathbf{n}))\bigg)\\
        &\hskip105pt
        \bigg(\mp i  (\sigma \cdot \mathbf{n}) + \frac{1}{2}
        ( \eta \bI_2 + \tau \sigma_3 + \lambda (\sigma \cdot \mathbf{t})+ \omega(\sigma \cdot \mathbf{n}))\bigg)\\
        &=\frac{4}{4+d-\omega^2\mp4\omega i}
        \bigg(\frac{-4+d-\omega^2}{4}\bI_2\pm
        ( \lambda \sigma_3 - \eta i(\sigma \cdot \mathbf{n}) - \tau (\sigma \cdot \mathbf{t}))\bigg),
        \end{split}
\end{equation*}
and, similarly,
\begin{equation*}
\begin{split}
 (M^{\pm}_{X\eta,X\tau,X\lambda,0}&)^{-1}
 M_{X\eta,X\tau,X\lambda,0}^\mp
 \\
 & =\frac{4}{4+dX^2}
        \bigg(\frac{-4+dX^2}{4}\bI_2\pm
        X( \lambda \sigma_3 - \eta i(\sigma \cdot \mathbf{n}) - \tau (\sigma \cdot \mathbf{t}))\bigg).
        \end{split}
\end{equation*}
From these calculations, we obtain
\begin{equation}\label{eq:m+m-.1}
\begin{split}
(M^{+}_{\eta,\tau,\lambda,\omega}& )^{-1}
  M_{\eta,\tau,\lambda,\omega}^-
 (M^{-}_{X\eta,X\tau,X\lambda,0})^{-1}
 M_{X\eta,X\tau,X\lambda,0}^+\\
 &=\frac{4}{4+d-\omega^2-4\omega i}
        \bigg(\frac{-4+d-\omega^2}{4}\bI_2+
         \lambda \sigma_3 - \eta i(\sigma \cdot \mathbf{n}) - \tau (\sigma \cdot \mathbf{t})\bigg)\\
        &\hskip40pt \frac{4}{4+dX^2}
        \bigg(\frac{-4+dX^2}{4}\bI_2-
        X \lambda \sigma_3 + X \eta i(\sigma \cdot \mathbf{n}) + X\tau (\sigma \cdot \mathbf{t})\bigg).
\end{split}
\end{equation}
Given $a,\,\tilde a\in\R$, a computation shows that 
\begin{equation}\label{eq:m+m-.2}
\begin{split}
        \big(a\bI_2+
         \lambda \sigma_3 -  \eta i(\sigma \cdot \mathbf{n}) - \tau (\sigma \cdot \mathbf{t})\big)
        \big(\tilde a\bI_2-
        X \lambda \sigma_3 + X \eta i(\sigma \cdot \mathbf{n}) + X\tau (\sigma \cdot \mathbf{t})\big)\\
        =(a\tilde a +dX)\bI_2+(\tilde a-aX)\big(\lambda\sigma_3
        -\eta i(\sigma \cdot \mathbf{n})
        -\tau(\sigma \cdot \mathbf{t})\big).
\end{split}
\end{equation}
By taking $a=\frac{-4+d-\omega^2}{4}$ and $\tilde a=\frac{-4+dX^2}{4}$, and using \eqref{eq:solX}, we see that
\begin{equation}\label{eq:m+m-.2a}
\begin{split}
a\tilde a+dX&
=\frac{(-4+d-\omega^2)X(-4+dX^2)}{16X}+dX
=\frac{(dX^2-4)^2}{16X}+dX
=\frac{(dX^2+4)^2}{16X}
\end{split}
\end{equation}
and
\begin{equation}\label{eq:m+m-.2b}
\begin{split}
\tilde a-aX=\frac{1}{4}\big(dX^2-4+(4-d+\omega^2)X\big)=0.
\end{split}
\end{equation}
Plugging \eqref{eq:m+m-.2a} and \eqref{eq:m+m-.2b} in \eqref{eq:m+m-.2}, and combining then with 
\eqref{eq:m+m-.1}, we conclude that
\begin{equation*}
\begin{split}
(M^{+}_{\eta,\tau,\lambda,\omega})^{-1}
 M_{\eta,\tau,\lambda,\omega}^-
 &(M^{-}_{X\eta,X\tau,X\lambda,0})^{-1}
 M_{X\eta,X\tau,X\lambda,0}^+\\
 &=\frac{4}{4+d-\omega^2-4\omega i}\,\frac{4}{4+dX^2}\,
 \frac{(dX^2+4)^2}{16X}\,\bI_2\\
 &=\frac{dX^2+4}{X(4+d-\omega^2-4\omega i)}\,\bI_2=\overline z\bI_2,
\end{split}
\end{equation*}
where we used \eqref{eq:def.z} in the last equality above.
Therefore, \eqref{eq:rel.initial.final.M} holds and the lemma follows.
\end{proof}

\begin{proof}[Proof of Theorem~\ref{thm:w.gauged}]
We have already shown that $X\in\R\setminus\{0\}$ and $z\in\C$ satisfies $|z|=1$, see the comments above. It only remains to prove that 
\begin{equation}\label{eq:unit.equiv.X}
\ccD_{\eta,\tau,\lambda,\omega}^z
=\ccD_{X\eta,X\tau,X\lambda,0}.
\end{equation}
Once this is shown we would get that
$\ccD_{\eta,\tau,\lambda,\omega}$ and 
$\ccD_{X\eta,X\tau,X\lambda,0}$ are unitarily equivalent  through the unitary operator $U_z$, because 
$\ccD_{\eta,\tau,\lambda,\omega}^z 
 = U_{\overline z}\ccD_{\eta,\tau,\lambda,\omega} U_z$ by definition.

Note that \eqref{eq:unit.equiv.X} is obvious if $(d,\omega)=(-4,0)$
 because then $X=1$, $z=1$, and  $U_z=\bI_2$. From now on we assume that $(d,\omega)\neq(-4,0)$. Then, Lemma \ref{lemma:key.gauge} yields
\begin{equation}\label{eq:rel.initial.final.M.gauge}
(M^{-}_{\eta,\tau,\lambda,\omega})^{-1}
M^{+}_{\eta,\tau,\lambda,\omega}
=z(M^{-}_{X\eta,X\tau,X\lambda,0})^{-1}
M^{+}_{X\eta,X\tau,X\lambda,0}.
\end{equation}
Recalling now \eqref{eq:defn.D.shell.domain_w} and \eqref{def:M+-}, we have
\begin{equation*}
  \begin{split}
    \dom(\ccD_{\eta,\tau,\lambda,\omega}) &= 
    \big\{  f = f_+ \oplus f_- \in H(\sigma,\Omega_+) \oplus
    H(\sigma,\Omega_-)\mid \\
    &\hskip50ptM^{+}_{\eta,\tau,\lambda,\omega}\mathcal{T}_+^Df_+
 =-M^{-}_{\eta,\tau,\lambda,\omega}\mathcal{T}_-^Df_-
    \big\}.
\end{split}
\end{equation*}
Therefore, using \eqref{eq:rel.initial.final.M.gauge} and that 
$z\overline z=|z|^2=1$,
we deduce that
\begin{equation*}
  \begin{split}
   \dom(\ccD_{\eta,\tau,\lambda,\omega}^z) &=
   U_{\overline z} (\dom(\ccD_{\eta,\tau,\lambda,\omega}))\\
   &=\big\{U_{\overline z}f = f_+ \oplus \overline z f_- \in H(\sigma,\Omega_+) \oplus
    H(\sigma,\Omega_-)\mid \\
    &\hskip50pt M^{+}_{\eta,\tau,\lambda,\omega}
    \mathcal{T}_+^Df_+
    =-M^{-}_{\eta,\tau,\lambda,\omega}\mathcal{T}_-^Df_-
    \big\}\\
    &=\big\{U_{\overline z}f = f_+ \oplus \overline z f_- \in H(\sigma,\Omega_+) \oplus
    H(\sigma,\Omega_-)\mid \\
    &\hskip50pt 
    (M^{-}_{\eta,\tau,\lambda,\omega})^{-1}M^{+}_{\eta,\tau,\lambda,\omega}
    \mathcal{T}_+^Df_+
    =-\mathcal{T}_-^Df_-
    \big\}\\
    &=\big\{U_{\overline z}f = f_+ \oplus \overline z f_- \in H(\sigma,\Omega_+) \oplus
    H(\sigma,\Omega_-)\mid \\
    &\hskip50pt 
    z(M^{-}_{X\eta,X\tau,X\lambda,0})^{-1}
M^{+}_{X\eta,X\tau,X\lambda,0}
    \mathcal{T}_+^Df_+
    =-\mathcal{T}_-^Df_-
    \big\}\\
    &=\big\{U_{\overline z}f = f_+ \oplus \overline z f_- \in H(\sigma,\Omega_+) \oplus
    H(\sigma,\Omega_-)\mid \\
    &\hskip50pt 
M^{+}_{X\eta,X\tau,X\lambda,0}
    \mathcal{T}_+^Df_+
    =- M^{-}_{X\eta,X\tau,X\lambda,0}\mathcal{T}_-^D(\overline zf_-)\big\}\\
    &=\big\{g = g_+ \oplus g_- \in H(\sigma,\Omega_+) \oplus
    H(\sigma,\Omega_-)\mid \\
    &\hskip50pt 
M^{+}_{X\eta,X\tau,X\lambda,0}
    \mathcal{T}_+^Dg_+
    =- M^{-}_{X\eta,X\tau,X\lambda,0}\mathcal{T}_-^Dg_-\big\}\\
    &=\dom(\ccD_{X\eta,X\tau,X\lambda,0}).
\end{split}
\end{equation*}

Now, let $f\in \dom(\ccD_{\eta,\tau,\lambda,\omega}^z)
=\dom(\ccD_{X\eta,X\tau,X\lambda,0})$. Then
$f=U_{\overline z}\varphi$ for some $\varphi\in \dom(\ccD_{\eta,\tau,\lambda,\omega})$. Recall that, although $\eta,\,\tau$, and $\lambda$ may be non-constant, we assume that $d$ and $\omega$ are constant along 
$\Sigma$, which implies that $z$ is also constant in $\overline{\Omega_-}$. This yields $\overline zD_0\varphi_-=D_0(\overline z\varphi_-)$ in $\Omega_-$. Hence,
\begin{equation}\label{eq:Dz.DX}
\begin{split}
\ccD_{\eta,\tau,\lambda,\omega}^z f
&=U_{\overline z}\ccD_{\eta,\tau,\lambda,\omega} U_zf
=U_{\overline z}\ccD_{\eta,\tau,\lambda,\omega} U_zU_{\overline z}\varphi
=U_{\overline z}\ccD_{\eta,\tau,\lambda,\omega} \varphi\\
&=U_{\overline z}(D_0 \varphi_+ \, \oplus \,D_0 \varphi_-)
=D_0 \varphi_+ \, \oplus \,\overline zD_0 \varphi_-
= D_0 \varphi_+\, \oplus \,D_0 (\overline z\varphi_-)\\
&=D_0 (U_{\overline z}\varphi)_+ \, \oplus \,D_0 (U_{\overline z}\varphi)_-
=D_0 f_+ \, \oplus \,D_0 f_-
=\ccD_{X\eta,X\tau,X\lambda,0}f.
\end{split}
\end{equation}
That is, $\ccD_{\eta,\tau,\lambda,\omega}^z f=\ccD_{X\eta,X\tau,X\lambda,0}f$ for all $f\in \dom(\ccD_{\eta,\tau,\lambda,\omega}^z)=\dom(\ccD_{X\eta,X\tau,X\lambda,0})$. Therefore, \eqref{eq:unit.equiv.X} holds and Theorem \ref{thm:w.gauged} follows.
\end{proof}

\subsection{Spectral relations}\label{sec:spectral.relations}
From the proof of Theorem~\ref{thm:w.gauged} we realize that, if
$\omega=0$, we can also allow $d$ to be variable in $\Sigma$ in the
conclusion of Theorem~\ref{thm:w.gauged}, as long as $d(x)\notin\{0,-4\}$
for all $x\in \Sigma$. This is because \eqref{eq:Dz.DX} holds whenever $z$ is constant in 
$\Omega_-$, and for $\omega=0$ and $d\neq0,-4$ we can always take $z=-1$, as we explained below the statement of Theorem~\ref{thm:w.gauged}. 
Thus, we can take $X=-4/d$, which yields the isospectral transformation of parameters
\begin{equation*}
(\eta,\tau,\lambda)\mapsto  
(X\eta,X\tau,X\lambda)=-\frac{4}{d}(\eta,\tau,\lambda)
\end{equation*}
for all $\eta,\,\tau,\,\lambda\in C^\infty(\Sigma;\R)$ such that
$\eta^2(x)-\tau^2(x)-\lambda^2(x)=d(x)\notin\{0,-4\}$ for all $x\in\Sigma$, with
no more restrictions on $d$ in $\Sigma$.
We underline that this correspondence maps the set $\{(\eta,\tau,\lambda) \in \R^3 \mid
\eta^2 - \tau^2 -\lambda^2 < -4\}$ onto
$\{(\eta,\tau,\lambda) \in \R^3 \mid
-4 < \eta^2 - \tau^2 -\lambda^2 < 0\}$ and $\{(\eta,\tau,\lambda) \in \R^3 \mid
\eta^2 - \tau^2 -\lambda^2 > 0\}$ onto itself.

Next, we apply the observation above on $\ccD_{\eta,\tau,\lambda,0}= \ccD_{\eta,\tau,\lambda}$ with constant parameters. Moreover, we
investigate the spectral relation between $\ccD_{\eta,\tau,\lambda}$ and its charge conjugation.
\begin{proposition}
  Let $\eta,\,\tau,\,\lambda \in \R$, and $\ccD_{\eta,\tau,\lambda}$ be
  defined as in
  \eqref{eq:defn.D.shell.domain}, \eqref{eq:defn.D.shell}. The following hold:
    \begin{enumerate}[label=$({\roman*})$]
  \item\label{item:isospectral.spectrum} if $d\neq 0$, then $z \in \sigma_p(\ccD_{\eta,\tau,\lambda})$ if
    and  only if $z \in \sigma_p(\ccD_{-4\eta/d,-4\tau/d,-4\lambda/d})$.
  \item\label{item:charge.conjugation.spectrum} $z \in
    \sigma_p(\ccD_{\eta,\tau,\lambda})$ if and only if $-z \in
    \sigma_p(\ccD_{-\eta,\tau,-\lambda})$.
  \end{enumerate}
\end{proposition}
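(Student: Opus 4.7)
The plan is to derive each equivalence from an appropriate unitary (part~\ref{item:isospectral.spectrum}) or antiunitary (part~\ref{item:charge.conjugation.spectrum}) transformation that intertwines the two operators up to a sign on the spectral parameter.

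For part~\ref{item:isospectral.spectrum}, I would apply \Cref{thm:w.gauged} to the constant triple $\eta,\tau,\lambda\in\R$ with $\omega=0$. Since $d\neq 0$, equation~\eqref{eq:solX} factors as $(X-1)(dX+4)=0$, so $X:=-4/d$ is a solution, and substitution into~\eqref{eq:def.z} gives $z=-1$ when $d\neq-4$ (and $X=1,\,z=1$ when $d=-4$, in which case the parameter transformation is the identity and the claim is trivial). \Cref{thm:w.gauged} then yields
\[
\ccD_{\eta,\tau,\lambda}=\ccD_{\eta,\tau,\lambda,0}=U_z\,\ccD_{-4\eta/d,-4\tau/d,-4\lambda/d}\,U_{\overline z},
\]
and this unitary equivalence preserves the point spectrum.

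For part~\ref{item:charge.conjugation.spectrum}, I would introduce the antilinear involution
\[
\mathcal{C}:L^2(\R^2;\C^2)\to L^2(\R^2;\C^2),\qquad \mathcal{C}\psi:=\sigma_1\overline{\psi},
\]
which satisfies $\mathcal{C}^2=\bI$, preserves the $L^2$-norm, and commutes pointwise with the Dirichlet traces $\mathcal{T}_\pm^D$. Using $\overline{\sigma_1}=\sigma_1$, $\overline{\sigma_2}=-\sigma_2$, $\overline{\sigma_3}=\sigma_3$ together with $\sigma_1\sigma_2=i\sigma_3$ and $\sigma_1\sigma_3=-i\sigma_2$, a direct computation yields two key algebraic identities: for every real vector $A\in\R^2$, $(\sigma\cdot A)\sigma_1=\sigma_1\overline{(\sigma\cdot A)}$, whereas $\sigma_3\sigma_1=-\sigma_1\overline{\sigma_3}$. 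Combined with $\overline i=-i$, these imply the intertwining $D_0\mathcal{C}=-\mathcal{C}D_0$ on each of $\Omega_\pm$.

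With these tools in hand, I would verify that $\mathcal{C}$ maps $\dom\ccD_{\eta,\tau,\lambda}$ onto $\dom\ccD_{-\eta,\tau,-\lambda}$: taking the complex conjugate of the boundary condition in~\eqref{eq:defn.D.shell.domain} and left-multiplying by $-\sigma_1$, the identities above transform it precisely into the boundary condition~\eqref{eq:defn.D.shell.domain} with parameters $(-\eta,\tau,-\lambda)$ and with $(\mathcal{C}f)_\pm=\sigma_1\overline{f_\pm}$ in place of $f_\pm$. If $\ccD_{\eta,\tau,\lambda}f=zf$ with $f\neq 0$, the intertwining then yields $\ccD_{-\eta,\tau,-\lambda}(\mathcal{C}f)=-\overline z\,\mathcal{C}f$; since $\ccD_{\eta,\tau,\lambda}$ is symmetric (the interaction matrix $\eta\bI_2+\tau\sigma_3+\lambda(\sigma\cdot\mathbf{t})$ being Hermitian for real parameters), one has $z\in\R$, so $-z\in\sigma_p(\ccD_{-\eta,\tau,-\lambda})$, and the converse follows by applying $\mathcal{C}$ once more. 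The main obstacle is the sign bookkeeping in the boundary-condition transformation: the asymmetry $(\sigma\cdot A)\sigma_1=+\sigma_1\overline{(\sigma\cdot A)}$ versus $\sigma_3\sigma_1=-\sigma_1\overline{\sigma_3}$ is exactly what forces $\tau$ to remain fixed while $\eta$ and $\lambda$ change sign, matching the transformation $(\eta,\tau,\lambda)\mapsto(-\eta,\tau,-\lambda)$ in the statement.
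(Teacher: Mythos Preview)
Your proposal is correct and follows essentially the same approach as the paper: part~\ref{item:isospectral.spectrum} via \Cref{thm:w.gauged} with $\omega=0$ and the choice $X=-4/d$, $z=-1$ (with $d=-4$ handled trivially), and part~\ref{item:charge.conjugation.spectrum} via the antilinear charge conjugation $\mathcal{C}f=\sigma_1\overline{f}$ establishing $\mathcal{C}\,\ccD_{\eta,\tau,\lambda}=-\ccD_{-\eta,\tau,-\lambda}\,\mathcal{C}$. The only addition is your explicit remark that symmetry forces $z\in\R$ so that $-\overline z=-z$, a detail the paper leaves implicit.
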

\begin{proof}
  \ref{item:isospectral.spectrum}
  The case $d=-4$ is obvious. The case $d\neq-4$ follows from Theorem~\ref{thm:w.gauged} with $\omega=0$ simply noting that one can take $X=-4/d$ and $z=-1$.

  \ref{item:charge.conjugation.spectrum}.
  Let $C$ be the antilinear charge conjugation operator
  \begin{equation*}
    C: L^2(\R^2;\C^2) \to L^2(\R^2;\C^2), \quad C f = \sigma_1 \overline{f}, \quad f \in L^2(\R^2;\C^2).
  \end{equation*}
The operator $C$ is an involution, \ie $C^2 f = f$ for all $f \in
L^2(\R^2;\C^2)$. The result follows if we show that 
\begin{equation}\label{eq:charge.conjugation.1}
  C D_{\eta,\tau,\lambda} = - D_{-\eta,\tau,-\lambda} C.
\end{equation}
  Taking the complex conjugate of the condition in the definition of $\dom\ccD_{\eta,\tau,\lambda}$, we see that $f \in
  \dom D_{\eta,\tau,\lambda}$ if and only if
  \begin{equation*}
    -i (\overline{\sigma}\cdot \mathbf{n})(\mathcal{T}_-^D \overline{f_-} -\mathcal{T}_+^D  \overline{f_+}) 
     =
    \tfrac12 (\eta \bI_2 + \tau \sigma_3 + \lambda (\overline{\sigma}\cdot
    \mathbf{t}))
    (\mathcal{T}_-^D \overline{f_-} + \mathcal{T}_+^D
    \overline{f_+}),
  \end{equation*}
  where we denoted
  $\overline\sigma:=(\overline{\sigma_1},\overline{\sigma_2})$ and
  $\overline{\sigma_{j}}$ is the matrix that has the
  conjugate entries of the matrix $\sigma_j$, $j=1,2$. Since
  $\overline{\sigma} = (\sigma_1,-\sigma_2)$, multiplying the last
  equation by $\sigma_1$ we get
  \begin{equation*}
    i ({\sigma}\cdot \mathbf{n})(\mathcal{T}_-^D
    (\sigma_1 \overline{f_-})
    -\mathcal{T}_+^D (\sigma_1 \overline{f_+})) 
     =
    \tfrac12 (-\eta \bI_2 + \tau \sigma_3 - \lambda ({\sigma}\cdot
    \mathbf{t}))
    (\mathcal{T}_-^D (\sigma_1 \overline{f_-}) + \mathcal{T}_+^D
    (\sigma_1 \overline{f_+})),
  \end{equation*}
  \ie $Cf \in \dom D_{-\eta,\tau,-\lambda}$. We have  showed
  that $\dom (C D_{\eta,\tau,\lambda}) = \dom (D_{-\eta,\tau,-\lambda} C)$.
  With an explicit computation one sees that $ (-i \sigma\cdot \nabla
  +m \sigma_3 ) C f = - C (-i\sigma\cdot\nabla +m\sigma_3) f$. Thus, we get
  \eqref{eq:charge.conjugation.1}.
\end{proof}

We finally mention that the three-dimensional analogue of 
$\ccD_{\eta,0,0,\omega}$ was investigated in \cite{mas2017dirac}, where the same transformation of the coefficients $(\eta,\omega)\to(\tilde \eta,0)$ by means of $X$ and $z$ was discovered. Since here we also admit $\tau,\,\lambda\neq0$, and, with a restriction, also non-constant coefficients,
Theorem~\ref{thm:w.gauged} can be understood as a generalization of \cite{mas2017dirac} to the two-dimensional scenario for more general $\delta$-shell interactions.

\section{Confinement}
\label{sec:confinement}
In the following lemma we describe the properties of confinement and
 transmission induced by the boundary condition in \eqref{eq:defn.D.shell.domain}.
\begin{lemma}\label{lem:confinement}
  Let $\eta,\tau,\lambda \in C^\infty(\Sigma;\R)$ and let $\ccD_{\eta,\tau,\lambda}$ be
  defined as in \eqref{eq:defn.D.shell.domain}, \eqref{eq:defn.D.shell}.
  Then the following hold:
  \begin{enumerate}[label=$(\roman*)$]
      \item\label{item:transmission} if ${d} \neq -4$ everywhere on $\Sigma$, there exists an
    invertible matrix function $R_{\eta,\tau,\lambda}$ (explicitly defined in \eqref{def_R_eta} below)
    such that every
    $f = f_+ \oplus f_- \in H(\sigma,\Omega_+) \oplus H(\sigma,\Omega_-)$
    belongs to $\dom \ccD_{\eta,\tau,\lambda}$
    if and only if
    \begin{equation} \label{eq:BC}
      \mathcal{T}_+^D f_+ = R_{\eta,\tau,\lambda} \mathcal{T}_-^D f_-;
    \end{equation}
  \item\label{item:confinement} if $d = -4$ everywhere on $\Sigma$, every $f = f_+ \oplus f_- \in
    H(\sigma,\Omega_+) \oplus H(\sigma,\Omega_-)$ belongs to $\dom \ccD_{\eta,\tau,\lambda}$
    if and only if
    \begin{equation}\label{eq:confinement.BC}
      \left[i (\sigma \cdot \mathbf{n}) \pm
        \tfrac12 (\eta \bI_2 + \tau \sigma_3 + \lambda \, (\sigma \cdot \mathbf{t})) \right] 
      \mathcal{T}_\pm^D f_\pm = 0.
    \end{equation}
  \end{enumerate}
\end{lemma}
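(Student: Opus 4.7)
The plan is to first reformulate the boundary condition in \eqref{eq:defn.D.shell.domain} compactly. Following \eqref{def:M+-} with $\omega = 0$, set
\begin{equation*}
M^\pm := \pm i(\sigma\cdot\mathbf{n}) + \tfrac{1}{2}\bigl(\eta\bI_2 + \tau\sigma_3 + \lambda(\sigma\cdot\mathbf{t})\bigr),
\end{equation*}
so that a direct rearrangement shows that $f = f_+ \oplus f_- \in H(\sigma,\Omega_+)\oplus H(\sigma,\Omega_-)$ lies in $\dom\ccD_{\eta,\tau,\lambda}$ if and only if
\begin{equation*}
M^+\, \mathcal{T}_+^D f_+ + M^-\, \mathcal{T}_-^D f_- = 0 \quad \text{in } H^{-1/2}(\Sigma;\C^2).
\end{equation*}
The same calculation carried out in the proof of \Cref{lemma:key.gauge} (specialized to $\omega=0$) yields the pointwise identity $M^\pm \widetilde{M}^\pm = \tfrac{1}{4}(4+d)\,\bI_2$, where $\widetilde{M}^\pm := \mp i(\sigma\cdot\mathbf{n}) + \tfrac{1}{2}(\eta\bI_2 - \tau\sigma_3 - \lambda(\sigma\cdot\mathbf{t}))$.

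For part \ref{item:transmission}, the hypothesis $d \neq -4$ everywhere on $\Sigma$ makes $M^\pm$ pointwise invertible with smooth inverse $(M^\pm)^{-1} = \tfrac{4}{4+d}\widetilde{M}^\pm \in C^\infty(\Sigma;\C^{2\times 2})$. I would set
\begin{equation*}
R_{\eta,\tau,\lambda} := -(M^+)^{-1} M^-,
\end{equation*}
which is a smooth, everywhere invertible matrix-valued function on $\Sigma$ and hence a bounded isomorphism acting on $H^{-1/2}(\Sigma;\C^2)$ by pointwise multiplication. Multiplying the reformulated boundary condition on the left by $(M^+)^{-1}$ then gives the equivalence with \eqref{eq:BC}; an explicit formula for $R_{\eta,\tau,\lambda}$ follows by expanding the product and using the identities \eqref{eq:Pauli.squares}--\eqref{eq:sigma_n.t}.

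For part \ref{item:confinement}, with $d=-4$ the identity becomes $M^\pm \widetilde{M}^\pm = 0$; since $M^\pm$ and $\widetilde{M}^\pm$ are nonzero at every point of $\Sigma$, each $M^\pm(x)$ has rank one with a one-dimensional kernel. The central algebraic step is to show that $\ker M^+(x) \cap \ker M^-(x) = \{0\}$ for every $x \in \Sigma$: if $v$ lies in both, adding $M^+ v = 0 = M^- v$ yields $(\eta\bI_2 + \tau\sigma_3 + \lambda(\sigma\cdot\mathbf{t}))v = 0$, while subtracting gives $2i(\sigma\cdot\mathbf{n})v = 0$; by \eqref{eq:Pauli.squares} the matrix $\sigma\cdot\mathbf{n}$ is invertible, forcing $v = 0$. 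Therefore $\C^2 = \ker M^+(x) \oplus \ker M^-(x)$ pointwise, and since $(M^+)^* = M^-$ (the Pauli matrices are Hermitian and $\mathbf{n},\mathbf{t}$ are real), taking orthogonal complements yields the dual transversal decomposition $\C^2 = \operatorname{range} M^+(x) \oplus \operatorname{range} M^-(x)$, varying smoothly in $x \in \Sigma$.

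The final step is to lift this pointwise transversality to the distributional setting. I would introduce the smooth complementary projectors $\Pi_\pm \in C^\infty(\Sigma;\C^{2\times 2})$ onto $\operatorname{range} M^\pm$ along $\operatorname{range} M^\mp$, which act by multiplication on $H^{-1/2}(\Sigma;\C^2)$ and satisfy $\Pi_\pm M^\pm = M^\pm$ and $\Pi_\pm M^\mp = 0$. Applying $\Pi_\pm$ to $M^+ \mathcal{T}_+^D f_+ + M^- \mathcal{T}_-^D f_- = 0$ then separates the condition into $M^+ \mathcal{T}_+^D f_+ = 0$ and $M^- \mathcal{T}_-^D f_- = 0$, which, rewriting $-M^- = i(\sigma\cdot\mathbf{n}) - \tfrac{1}{2}(\eta\bI_2 + \tau\sigma_3 + \lambda(\sigma\cdot\mathbf{t}))$, are precisely the two cases of \eqref{eq:confinement.BC}; the converse implication is immediate. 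I expect the main technical point to be this final passage from pointwise disjointness of the ranges to the separation of the two distributional summands, which is what necessitates the smooth transversal projectors $\Pi_\pm$ rather than a purely linear-algebraic argument.
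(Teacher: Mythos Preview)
Your proof is correct. Part \ref{item:transmission} is essentially identical to the paper's argument, just written in terms of $M^\pm$ rather than the paper's $M := -\tfrac{i}{2}(\sigma\cdot\mathbf{n})(\eta\bI_2+\tau\sigma_3+\lambda(\sigma\cdot\mathbf{t}))$; the two are related by $M^\pm = \pm i(\sigma\cdot\mathbf{n})(\bI_2 \pm M)$, and your $R_{\eta,\tau,\lambda} = -(M^+)^{-1}M^-$ coincides with the paper's $(\bI_2+M)^{-1}(\bI_2-M)$.

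For part \ref{item:confinement} you take a genuinely different and more structural route. The paper exploits the single algebraic fact $M^2 = -\tfrac{d}{4}\bI_2 = \bI_2$: starting from $(\bI_2+M)\mathcal{T}_+^Df_+ = (\bI_2-M)\mathcal{T}_-^Df_-$, it simply multiplies by $\bI_2\pm M$, using $(\bI_2+M)(\bI_2-M)=0$ and $(\bI_2\pm M)^2 = 2(\bI_2\pm M)$, to immediately decouple the two traces. No kernel/range analysis, no adjoint relation, no construction of projectors is needed---the matrices $\tfrac12(\bI_2\pm M)$ are already the smooth complementary idempotents you build by hand. Your approach (rank-one matrices, $(M^+)^*=M^-$, transversal range decomposition, smooth projectors $\Pi_\pm$) is more conceptual and makes the geometry explicit, but it is longer, and the ``technical point'' you flag at the end---lifting pointwise transversality to $H^{-1/2}$---evaporates once one notices that the projectors are just multiplication by smooth matrix functions, exactly as in the paper's one-line multiplication argument.
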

\begin{proof}
Let  $f = f_+ \oplus f_- \in H(\sigma,\Omega_+) \oplus H(\sigma,\Omega_-)$.
From \eqref{eq:defn.D.shell.domain}, $f \in \dom\ccD_{\eta,\tau,\lambda}$ if and only if 
\begin{equation}\label{eq:transmission1}
  \begin{split}
      &\left( i  (\sigma \cdot \mathbf{n}) + \frac{1}{2}
        ( \eta \bI_2 + \tau \sigma_3 +  \lambda \, (\sigma \cdot \mathbf{t})) \right) \mathcal{T}_+^D f_+
     \\ &  = \left( i  (\sigma \cdot \mathbf{n}) - \frac{1}{2}
        ( \eta \bI_2 + \tau \sigma_3 + \lambda \, (\sigma \cdot \mathbf{t})) \right)
      \mathcal{T}_-^D f_-.
    \end{split}
  \end{equation}
Thanks to \eqref{eq:Pauli.squares}, this is equivalent to
  \begin{equation} \label{equation_transmission_condition1}
      \left( \bI_2 +  M \right) \mathcal{T}_+^D f_+
= \left( \bI_2 - M \right)
      \mathcal{T}_-^D f_-,
    \end{equation}
    with
    \begin{equation*}
      M := -\frac{i}{2}  (\sigma \cdot \mathbf{n})
        ( \eta \bI_2 + \tau \sigma_3 + \lambda \, (\sigma \cdot \mathbf{t})).
    \end{equation*}
    Due to \eqref{eq:Pauli.squares}, \eqref{eq:sigma.t}, and \eqref{eq:sigma_n.t}, we have
    \begin{gather}
      \notag            M = 
        -\frac{\eta}{2}i(\sigma\cdot\mathbf{n}) - \frac{\tau}{2}
        \sigma\cdot\mathbf{t} + \frac{\lambda}{2}\sigma_3,
        \\  \label{eq:observation}
         M^{2} = - \frac{d}{4} \bI_2,
        \quad
          (\bI_2 + M)(\bI_2 - M) = \frac{4 + d}{4}\bI_2.
    \end{gather}
  When ${d} \neq -4$
  the matrix $\bI_2 + M$ 
 is invertible, by \eqref{eq:observation}, and 
 we get \ref{item:transmission} by setting
\begin{equation} \label{def_R_eta}
  \begin{split}
    R_{\eta,\tau,\lambda} := & (\bI_2 + M)^{-1} (\bI_2 - M)
    = \frac{4}{4 + d}(\bI_2 - M)^2
    = \frac{4}{4 + d}\left(\frac{4-d}{4} \bI_2  - 2M\right)
    \\
     = & \frac{4}{4+d}\left(\frac{4-d}{4}\,\bI_2+i\eta(\sigma\cdot\mathbf{n})+\tau
       (\sigma\cdot\mathbf{t}) -\lambda\sigma_3 \right).
   \end{split}
 \end{equation}
 
 If $d = -4$ then
 $M^2 = \bI_2$. Multiplying
 \eqref{equation_transmission_condition1}
 by $\bI_2 \pm M$ we get
 \begin{equation*}
   0 = (\bI_2 \pm M)^2  \mathcal{T}_\pm^D f_\pm
   = 2 (\bI_2 \pm M) \mathcal{T}_\pm^D f_\pm.
 \end{equation*}
 Multiplying the previous equation by $\tfrac{i}{2}(\sigma\cdot\mathbf{n})$ and using \eqref{eq:Pauli.squares} we arrive at
\eqref{eq:confinement.BC}. 
Viceversa, \eqref{eq:confinement.BC} implies
 \eqref{eq:transmission1}  trivially.
\end{proof}

If ${d} = \eta^2 -\tau^2 - \lambda^2
\neq -4$ everywhere on $\Sigma$, \Cref{lem:confinement} \ref{item:transmission}
states that the values of $f_+$ and $f_-$ along $\Sigma$ are related via the matrix
$R_{\eta,\tau,\lambda}$: the presence of the $\delta$-shell implies a transmission
condition for the functions in the domain of $\ccD_{\eta,\tau,\lambda}$
across the surface $\Sigma$.

If $d = \eta^2 -\tau^2 -\lambda^2 = -4$ everywhere on $\Sigma$, \Cref{lem:confinement} \ref{item:confinement} implies \Cref{thm:confinement.introduction}.

\section{The non-critical case}
\label{sec:core}

In this section we prove Theorem~\ref{thm:D.shell.introduction} stated in \Cref{sec:Main.results}. The operator
$\ccD_{\eta,\tau,\lambda}$ is defined again as in \eqref{eq:defn.D.shell.domain},
\eqref{eq:defn.D.shell}, where in general the condition in
\eqref{eq:defn.D.shell.domain} is understood in the sense of $H^{-\frac12}(\Sigma)$. We show the self-adjointness and some further
properties of the operator
$\ccD_{\eta,\tau,\lambda}$: the strategy of the
proofs mainly follows \cite{behrndt2019two}, but we need to modify the boundary
triple that we use in order to include the magnetic interaction.

Recall that we assume that 
$\eta,\tau,\lambda \in C^\infty(\Sigma;\R)$ and $\Sigma=\partial\Omega
\in C^\infty$ in order to exploit
the theory of pseudo-differential operators,
but we expect that these
assumptions can be greatly weakened. However, in the case that
$\Sigma$ has only Lipschitz regularity  and in the case that
$\eta,\tau,\lambda$ are not regular
different phenomena are expected, see
\cite{le2018self,pizzichillo2019self,cassano2019self},
see also \cite{behrndt2019self, rabinovich2020boundary} where
coefficients with lower
regularity are considered   in the three
dimensional case.

It will be convenient to introduce some extra notation. Recall the
definition $T :=  \mathbf{t}_1 + i \mathbf{t}_2 \in \C$ from
\eqref{eq:defn.T}, where $\mathbf{t} = ({t}_1, {t}_2)$ is  the
tangent vector to $\Sigma$ at the point $x \in \Sigma$.  We define the following matrix-valued functions on $\Sigma$:
\begin{equation}\label{eq:defn.V.and.B}
  V:=
  \begin{pmatrix}
    1 & 0 \\
    0 & \overline{T}
  \end{pmatrix},
  \quad
     B:=
   \begin{pmatrix}
     \eta + \tau & \lambda \\
     \lambda  & \eta - \tau
   \end{pmatrix}.
\end{equation}
For all $x\in \Sigma$, the matrix $V(x)$ is  unitary and
\begin{equation}\label{eq:propr.V}
\eta\bI_2 + \tau  \sigma_3 + \lambda (\sigma\cdot \mathbf{t}) =
   V^* B V \quad \text{ in }\Sigma.
 \end{equation}
Finally, we have  $\det B = \eta^2 -\tau^2 -\lambda^2 = d \in C^{\infty}(\Sigma;\R)$,  cf.~\eqref{eq:defn.d}.

In the following proposition we adapt to our setting the boundary triple constructed in 
\cite[Proposition 3.6]{behrndt2019two} for the operator
$S^*$, defined in \eqref{eq:defn.S*}.
In the formulation of the below proposition we extend the operators
$\Lambda$ and $\mathcal{C}_\zeta$ defined in~\eqref{eq:defn.Lambda}
and \eqref{eq:defn.Cz} respectively onto two-component functions applying the respective mappings component-wise.

\begin{proposition}\label{prop:boundary.triples.for.Dirac}
  Let $\zeta\in\rho (\ccD_0)$ and $\Gamma_0, \Gamma_1: \dom S^* \to L^2(\Sigma;
\C^2)$ be defined by
\begin{equation} \label{eq:defn.Gamma}
  \begin{split}
    \Gamma_0 f&= i \Lambda^{-1} V (\sigma\cdot \mathbf{n})\big(\cT_+^D f_+ -\cT_-^D f_-),\\
    \Gamma_1 f&=\dfrac{1}{2} \Lambda V \Big((\cT_+^D f_+ + \cT_-^D
    f_-) -(\mathcal{C}_\zeta+\mathcal{C}_{\Bar\zeta}) V^* \Lambda  \Gamma_0 f \Big), 
  \end{split}
\end{equation}
where  $f = f_+ \oplus f_- \in \dom S^*$.
Then $\{ L^2(\Sigma; \C^2), \Gamma_0, \Gamma_1 \}$ is a boundary triple for $S^*$ such that $\ccD_0=S^*\upharpoonright\ker\Gamma_0$. Moreover, the 
corresponding $\gamma$--field is 
  \begin{equation}\label{eq:defn.Gz}
    G_z : z \in \rho (\ccD_0) \mapsto  \Phi_z V^* \Lambda
  \end{equation}
  and the Weyl function is
  \begin{equation}\label{eq:defn.Mz}
      M_z : z \in \rho(\ccD_0) \mapsto \Lambda V\Big(\mathcal{C}_z -\dfrac{1}{2} \big(\mathcal{C}_\zeta+\mathcal{C}_{\Bar\zeta} \big) \Big)V^*\Lambda.
  \end{equation}
\end{proposition}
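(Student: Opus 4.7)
The plan is to verify the two axioms of \Cref{defn:boundary.triple} and then read off the $\gamma$-field and Weyl function from their defining formulas. The key is to exploit the decomposition $\dom S^* = \dom \ccD_0 \dotplus \ker(S^* - \zeta)$ induced by the chosen $\zeta \in \rho(\ccD_0)$: any $f \in \dom S^*$ writes uniquely as $f = f_0 + \Phi_\zeta \psi_f$ with $f_0 \in H^1(\R^2;\C^2)$ and $\psi_f \in H^{-1/2}(\Sigma;\C^2)$, and the Plemelj-Sokhotski-type identity recalled in \Cref{sec:auxiliary} gives
\[
   \cT_+^D f_+ - \cT_-^D f_- = -i (\sigma\cdot \mathbf{n}) \psi_f, \qquad
   \cT_+^D f_+ + \cT_-^D f_- = 2 \cT^D f_0 + 2 \mathcal{C}_\zeta \psi_f.
\]
Substituting these into \eqref{eq:defn.Gamma} and using $V^* V = \bI_2$ yields the explicit formulas $\Gamma_0 f = \Lambda^{-1} V \psi_f$ and $\Gamma_1 f = \Lambda V \cT^D f_0 + \tfrac{1}{2} \Lambda V (\mathcal{C}_\zeta - \mathcal{C}_{\bar\zeta}) \psi_f$, both of which lie in $L^2(\Sigma;\C^2)$ because $\cT^D f_0 \in H^{1/2}(\Sigma;\C^2)$ and $\mathcal{C}_\zeta - \mathcal{C}_{\bar\zeta} \in \Psi_\Sigma^{-\infty}$ by \eqref{eq:psidiff_inf}. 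In particular, since $\Lambda^{-1}$, $V$ and $\sigma \cdot \mathbf{n}$ are invertible, $\Gamma_0 f = 0$ is equivalent to $\cT_+^D f_+ = \cT_-^D f_-$, which characterises $\dom \ccD_0 = H^1(\R^2;\C^2)$; hence $\ccD_0 = S^* \upharpoonright \ker \Gamma_0$.

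For Green's identity I would introduce the abbreviations $A := \cT_+^D f_+ + \cT_-^D f_-$, $A' := \cT_+^D f_+ - \cT_-^D f_-$ and the analogous $B, B'$ for $g$. Component-wise integration by parts for $-i \sigma \cdot \nabla$ on $\Omega_\pm$ (carried out first for the $H^1$-piece $f_0$ and then extended to $\Phi_\zeta \psi_f$ using the identity $S^* \Phi_\zeta \psi_f = \zeta \Phi_\zeta \psi_f$ together with the decomposition) delivers
\[
   \langle S^* f, g \rangle - \langle f, S^* g \rangle = -\tfrac{i}{2} \langle (\sigma \cdot \mathbf{n}) A, B' \rangle - \tfrac{i}{2} \langle (\sigma \cdot \mathbf{n}) A', B \rangle.
\]
Unfolding \eqref{eq:defn.Gamma} on the abstract side, using the unitarity of $V$, the adjointness of $\Lambda$ and $\Lambda^{-1}$ expressed in \eqref{eq:Lambda.duality}, and $(\sigma \cdot \mathbf{n})^2 = \bI_2$, the combination $\langle \Gamma_1 f, \Gamma_0 g \rangle - \langle \Gamma_0 f, \Gamma_1 g \rangle$ collapses to exactly the same right-hand side: the terms containing $\mathcal{C}_\zeta + \mathcal{C}_{\bar\zeta}$ cancel thanks to the self-adjointness relation $(\mathcal{C}_\zeta)^* = \mathcal{C}_{\bar\zeta}$. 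For the surjectivity of $(\Gamma_0, \Gamma_1) : \dom S^* \to L^2(\Sigma;\C^2) \times L^2(\Sigma;\C^2)$, given any $(\varphi_0, \varphi_1)$ in the target I set $\psi_f := V^* \Lambda \varphi_0 \in H^{-1/2}(\Sigma;\C^2)$, pick $f_0 \in H^1(\R^2;\C^2)$ whose Dirichlet trace equals the $H^{1/2}$-element $V^* \Lambda^{-1} \varphi_1 - \tfrac{1}{2} (\mathcal{C}_\zeta - \mathcal{C}_{\bar\zeta}) \psi_f$ (possible because $\cT^D$ maps $H^1$ onto $H^{1/2}$), and put $f := f_0 + \Phi_\zeta \psi_f$; by construction $\Gamma_0 f = \varphi_0$ and $\Gamma_1 f = \varphi_1$.

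Finally, specialising the formulas for $\Gamma_0$ and $\Gamma_1$ to $f = \Phi_z \psi \in \ker(S^* - z)$ (i.e. the $f_0 = 0$ case, with $\mathcal{C}_\zeta$ replaced by $\mathcal{C}_z$ in the trace-sum identity) gives $\Gamma_0 \Phi_z \psi = \Lambda^{-1} V \psi$, hence $G_z = \Phi_z V^* \Lambda$, and
\[
   M_z \varphi = \Gamma_1 G_z \varphi = \Lambda V \Big( \mathcal{C}_z - \tfrac{1}{2} (\mathcal{C}_\zeta + \mathcal{C}_{\bar\zeta}) \Big) V^* \Lambda \varphi,
\]
as claimed. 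The main obstacle is making Green's identity rigorous, because the traces of $\dom S^*$-functions only live in $H^{-1/2}$: all the pairings above must therefore be interpreted as suitable $H^{-1/2}$--$H^{1/2}$ dualities, and the passage from an honest integration by parts for $H^1$-representatives to the general statement has to be argued carefully via the decomposition $f = f_0 + \Phi_\zeta \psi_f$ and the duality relation \eqref{eq:Lambda.duality}.
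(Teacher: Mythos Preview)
Your approach is correct, but it differs substantially from the paper's. The paper does not verify the axioms of \Cref{defn:boundary.triple} from scratch; instead it invokes \cite[Proposition~3.6]{behrndt2019two}, which already supplies a boundary triple $\{L^2(\Sigma;\C^2),\widetilde\Gamma_0,\widetilde\Gamma_1\}$ for $S^*$ (the one without the matrix $V$), together with its $\gamma$-field $\widetilde G_z = \Phi_z\Lambda$ and Weyl function $\widetilde M_z$. The whole argument then reduces to the single observation
\[
  \Gamma_0 = \Lambda^{-1}V\Lambda\,\widetilde\Gamma_0,\qquad
  \Gamma_1 = \Lambda V\Lambda^{-1}\,\widetilde\Gamma_1,
\]
with $\Lambda^{-1}V\Lambda$ and $\Lambda V\Lambda^{-1}$ bounded and boundedly invertible on $L^2(\Sigma;\C^2)$. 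Surjectivity is then immediate, and Green's identity follows from a two-line computation using \eqref{eq:Lambda.duality} and $V^*V=\bI_2$ to show $\langle\Gamma_0\phi,\Gamma_1\psi\rangle_{L^2}=\langle\widetilde\Gamma_0\phi,\widetilde\Gamma_1\psi\rangle_{L^2}$. The formulas for $G_z$ and $M_z$ drop out by composing $\widetilde G_z$, $\widetilde M_z$ with the transforming operators. This sidesteps entirely the delicate passage from $H^1$ integration by parts to the $H^{-1/2}$--$H^{1/2}$ duality that you flag as the main obstacle.

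Your direct route is more self-contained and makes the mechanism transparent, at the cost of reproving what \cite{behrndt2019two} already handled. One small correction: $\mathcal{C}_\zeta-\mathcal{C}_{\bar\zeta}$ is not in $\Psi_\Sigma^{-\infty}$, and \eqref{eq:psidiff_inf} concerns $C_\Sigma,C_\Sigma'$ rather than $\mathcal{C}_z$. From \eqref{Lambda_C_z_Lambda} you get only $\mathcal{C}_\zeta-\mathcal{C}_{\bar\zeta}\in\Psi_\Sigma^{-1}$, but that already suffices to map $H^{-1/2}$ into $H^{1/2}$ and hence to conclude $\Gamma_1 f\in L^2(\Sigma;\C^2)$.
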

\begin{proof}
  In \cite[Proposition~3.6]{behrndt2019two} it is proved that $\{
  L^2(\Sigma; \C^2), \widetilde \Gamma_0, \widetilde \Gamma_1 \}$ is a
  boundary triple for $S^*$, with
\begin{equation*} 
  \begin{split}
    \widetilde \Gamma_0 f&= i \Lambda^{-1}  (\sigma\cdot \mathbf{n})\big(\cT_+^D f_+ -\cT_-^D f_-),\\
    \widetilde \Gamma_1 f&=\dfrac{1}{2} \Lambda  \Big((\cT_+^D f_+ +
    \cT_-^D f_-) -(\mathcal{C}_\zeta+\mathcal{C}_{\Bar\zeta}) \Lambda
    \widetilde \Gamma_0 f \Big), 
    \quad\text{where } f = f_+ \oplus f_- \in \dom S^*.
  \end{split}
\end{equation*}
Moreover, $\ccD_0=S^*\upharpoonright\ker\widetilde\Gamma_0$ and the $\gamma$--field $\widetilde G_z$ and the Weyl function
$\widetilde M_z$ associated to the boundary triple 
$\{ L^2(\Sigma; \C^2), \widetilde \Gamma_0, \widetilde \Gamma_1 \}$
are defined by
\begin{equation}
  \begin{split}
    &\widetilde{G}_z : z \in \rho (D_0) \mapsto \big(\widetilde
    \Gamma_0 \upharpoonright \ker(A^*-z)\big)^{-1} = \Phi_z \Lambda,\\
    &\widetilde{M}_z :    z \in \rho(D_0) \mapsto \widetilde \Gamma_1
  \widetilde G_z = \Lambda  \Big(\mathcal{C}_z -\dfrac{1}{2} \big(\mathcal{C}_\zeta+\mathcal{C}_{\Bar\zeta} \big) \Big)\Lambda.
\end{split}
\end{equation}
We define $\Gamma_0,\Gamma_1$ as in \eqref{eq:defn.Gamma}.
The key observation of our proof is that
\begin{equation}\label{eq:gamma.gammatilde}
 \Gamma_0 = \Lambda^{-1}V\Lambda \, \widetilde \Gamma_0, 
\quad  \Gamma_1 = \Lambda V \Lambda^{-1} \, \widetilde
\Gamma_1,
\end{equation}
and $\Lambda^{-1}V\Lambda, \Lambda V \Lambda^{-1}$ are
bounded and boundedly invertible on $L^2(\Sigma;\C^2)$, since 
 $\Lambda : H^{s}(\Sigma) \to H^{s - \frac12}(\Sigma)$ is an
isomorphism for all $s\in \R$ and $V\in C^{\infty}(\Sigma;\C^2)$ is pointwise unitary.
Consequently, the map $f \in \dom S^* \mapsto (\Gamma_0 f, \Gamma_1
f )\in L^2(\Sigma;\C^2)\times L^2(\Sigma;\C^2)$ is surjective, \ie~the condition
\ref{item:boundary.triple.2} in  Definition
\ref{defn:boundary.triple} of boundary triple is fulfilled. In order to verify the condition \ref{item:boundary.triple.1} of the definition, we observe
that, for all $\phi, \psi \in \dom S^*$,
\begin{equation}\label{eq:gamma.gammatilde.1}
  \langle \Gamma_0 \phi , \Gamma_1 \psi  \rangle_{L^2} 
  =
  \langle \Lambda \Gamma_0 \phi , \Lambda^{-1} \Gamma_1 \psi\rangle_{H^{-1/2},H^{1/2}} 
  =
  \langle V \Lambda \, \widetilde \Gamma_0 \phi,
  V \Lambda^{-1} \, \widetilde \Gamma_1 \psi \rangle_{H^{-1/2},H^{1/2}},
\end{equation}
due to \eqref{eq:Lambda.duality} and \eqref{eq:gamma.gammatilde}.
The last term in the previous equation equals
\begin{equation}\label{eq:gamma.gammatilde.2}
  \langle  \Lambda \, \widetilde \Gamma_0 \phi,
  V^* V \Lambda^{-1} \,  \widetilde \Gamma_1 \psi \rangle_{H^{-1/2},
    H^{1/2}}
  =
  \langle  \Lambda \, \widetilde \Gamma_0 \phi,
   \Lambda^{-1} \,  \widetilde \Gamma_1 \psi \rangle_{H^{-1/2},
    H^{1/2}},
\end{equation}
because $V^* V = \bI_2$.
Combining \eqref{eq:gamma.gammatilde.1} and \eqref{eq:gamma.gammatilde.2}, and using \eqref{eq:Lambda.duality} again, we get
\begin{equation}\label{eq:last.boundary}
  \langle \Gamma_0 \phi , \Gamma_1 \psi  \rangle_{L^2}
  =
  \langle  \Lambda \, \widetilde \Gamma_0 \phi,
   \Lambda^{-1} \,  \widetilde \Gamma_1 \psi \rangle_{H^{-1/2},
     H^{1/2}}
   =
   \langle  \widetilde \Gamma_0 \phi,
   \widetilde \Gamma_1 \psi \rangle_{L^2}.
 \end{equation}
This yields \ref{item:boundary.triple.1}
 in Definition \ref{defn:boundary.triple}. Therefore,
we conclude that $\{ L^2(\Sigma; \C^2), \Gamma_0, \Gamma_1 \}$
is a boundary triple for $S^*$. Since $\ker \Gamma_0 = \ker \widetilde
\Gamma_0$, it is true that $\ccD_0=S^*\upharpoonright\ker\Gamma_0$. From
\eqref{eq:abstract.gamma.function}, we get that, for all $z \in \rho(\ccD_0)$,
\begin{equation*}
  G_z :=  \big(\Gamma_0 \upharpoonright \ker(A^*-z)\big)^{-1}
  = \big(\widetilde \Gamma_0 \upharpoonright \ker(A^*-z)\big)^{-1}
  \Lambda^{-1} V^* \Lambda
  = \widetilde G_z   \Lambda^{-1} V^* \Lambda = \Phi_z V^* \Lambda,
\end{equation*}
\ie \eqref{eq:defn.Gz}.
Plugging this result together with \eqref{eq:gamma.gammatilde} into \eqref{eq:abstract.weyl.function}, we obtain
\begin{equation*}
  \begin{split}
  M_z := & \, \Gamma_1 \,G_z = \Lambda V \Lambda^{-1} \widetilde \Gamma_1
  \widetilde G_z   \Lambda^{-1} V^* \Lambda
  =  \Lambda V \Lambda^{-1} 
  \widetilde{M}_z   \Lambda^{-1} V^* \Lambda\\
  = & \, \Lambda V \Big(\mathcal{C}_z -\dfrac{1}{2}
  \big(\mathcal{C}_\zeta+\mathcal{C}_{\Bar\zeta} \big) \Big)V^*
  \Lambda,
\end{split}
\end{equation*}
for all $z \in \rho(\ccD_0)$. This is just \eqref{eq:defn.Mz}. 
\end{proof}

The following lemma is a regularity result concerning the boundary
triple defined in \Cref{prop:boundary.triples.for.Dirac}.
\begin{lemma}\label{lem:regularity}
  Let $f \in \dom S^*$. Then $f \in H^1(\R^2\setminus\Sigma;\C^2)$ if
  and only if $\Gamma_0 f \in H^1(\Sigma;\C^2)$.  
\end{lemma}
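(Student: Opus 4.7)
The plan is to translate the $H^1(\Sigma;\C^2)$ condition on $\Gamma_0 f$ into an $H^{1/2}(\Sigma;\C^2)$ condition on the jump $\mathcal{T}^D_+ f_+ - \mathcal{T}^D_- f_-$, and then use Proposition \ref{prop:trace.H12} together with the Plemelj–Sokhotski-type formula to handle each trace separately.

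First, I would observe that $V \in C^\infty(\Sigma;\C^{2\times 2})$ is pointwise unitary, $\sigma \cdot \mathbf{n} \in C^\infty(\Sigma;\C^{2\times 2})$ is pointwise involutive (hence invertible), and $\Lambda^{-1}:H^{1/2}(\Sigma;\C^2)\to H^{1}(\Sigma;\C^2)$ is an isomorphism (see the discussion around \eqref{eq:defn.Lambda}). Multiplication by $V$ and by $\sigma\cdot\mathbf{n}$ preserves $H^{1/2}(\Sigma;\C^2)$ since both matrix functions are smooth. Consequently, from the definition
\[
  \Gamma_0 f = i \Lambda^{-1} V (\sigma\cdot \mathbf{n})\big(\cT_+^D f_+ -\cT_-^D f_-\big),
\]
I conclude that $\Gamma_0 f \in H^1(\Sigma;\C^2)$ if and only if the jump $\cT_+^D f_+ -\cT_-^D f_- \in H^{1/2}(\Sigma;\C^2)$.

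For the implication $f\in H^1(\R^2\setminus\Sigma;\C^2)\Rightarrow \Gamma_0 f\in H^1(\Sigma;\C^2)$, Proposition \ref{prop:trace.H12} yields $\cT_\pm^D f_\pm \in H^{1/2}(\Sigma;\C^2)$, so the jump lies in $H^{1/2}$, and the observation above gives the claim.

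For the converse, I would use the direct sum decomposition $\dom S^* = \dom \ccD_0 \dotplus \ker(S^* - z)$ for some fixed $z\in\rho(\ccD_0)$: write $f = f_0 + \Phi_z \varphi$ with $f_0 \in H^1(\R^2;\C^2)$ and $\varphi \in H^{-1/2}(\Sigma;\C^2)$. The Plemelj–Sokhotski-type formula recalled in Section \ref{sec:auxiliary} gives
\[
  \cT_\pm^D \Phi_z \varphi = \mp \tfrac{i}{2}(\sigma\cdot\mathbf{n})\varphi + \mathcal{C}_z\varphi,
\]
so $\cT_+^D f_+ - \cT_-^D f_- = -i(\sigma\cdot\mathbf{n})\varphi$. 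Since $(\sigma\cdot\mathbf{n})^2 = \bI_2$, the assumption $\cT_+^D f_+ - \cT_-^D f_- \in H^{1/2}(\Sigma;\C^2)$ forces $\varphi \in H^{1/2}(\Sigma;\C^2)$. Because $\mathcal{C}_z\in\Psi_\Sigma^0$ is bounded on $H^{1/2}(\Sigma;\C^2)$ and $\cT^D f_0 \in H^{1/2}(\Sigma;\C^2)$, the individual boundary values
\[
  \cT_\pm^D f_\pm = \cT^D f_0 \mp \tfrac{i}{2}(\sigma\cdot\mathbf{n})\varphi + \mathcal{C}_z\varphi
\]
belong to $H^{1/2}(\Sigma;\C^2)$. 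Applying Proposition \ref{prop:trace.H12} on each side gives $f_\pm \in H^1(\Omega_\pm;\C^2)$, i.e.\ $f\in H^1(\R^2\setminus\Sigma;\C^2)$.

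The only slightly delicate point is the converse direction, where one must upgrade the regularity of the \emph{jump} to regularity of each trace individually; this is precisely where the Plemelj–Sokhotski decomposition is essential, since a priori one only controls the difference of the two traces. Everything else reduces to boundedness properties of the smooth matrix multipliers $V$, $\sigma\cdot\mathbf{n}$ and of the pseudodifferential operators $\Lambda^{-1}$ and $\mathcal{C}_z$.
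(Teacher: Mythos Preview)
Your proof is correct. The paper's own proof is merely the sentence ``analogous to \cite[Lemma~3.7]{behrndt2019two}, reasoning as in the proof of \Cref{prop:boundary.triples.for.Dirac}''; the intended argument is to use the relation $\Gamma_0 = \Lambda^{-1}V\Lambda\,\widetilde\Gamma_0$ from \eqref{eq:gamma.gammatilde}, observe that $\Lambda^{-1}V\Lambda$ is an isomorphism of $H^1(\Sigma;\C^2)$ (since $V$ is smooth and invertible), and then invoke \cite[Lemma~3.7]{behrndt2019two} for $\widetilde\Gamma_0$. You instead give a direct, self-contained proof via the decomposition $f = f_0 + \Phi_z\varphi$ and the Plemelj--Sokhotski formula, which is essentially what the cited lemma in \cite{behrndt2019two} does for $\widetilde\Gamma_0$. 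The paper's route is shorter because it outsources the substance to the reference; yours makes the mechanism transparent and avoids the external citation, at the cost of a few more lines.
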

\begin{proof}
  The proof is analogous to \cite[Lemma~3.7]{behrndt2019two}, reasoning as in the proof of \Cref{prop:boundary.triples.for.Dirac}.
\end{proof}

The following proposition is a modification of \cite[Proposition
4.3]{behrndt2019two} taking into account the magnetic $\delta$--shell
interaction. 

\begin{proposition}\label{prop:boundary.to.dirac}
  Let $\eta,\tau,\lambda \in C^{\infty}(\Sigma ; \R)$,
  and let $B$ be defined as in \eqref{eq:defn.V.and.B}. Let the operator $\ccD_{\eta,\tau,\lambda}$ be defined as in~\eqref{eq:defn.D.shell.domain},~\eqref{eq:defn.D.shell}. Then the following hold:
  \begin{enumerate}[label=$({\roman*})$]
  \item\label{itm:different.from.zero}
    Assume $d(x)\neq 0$ for all $x\in \Sigma$. Let $\theta \in \Psi_{\Sigma}^1$ be given by
    \begin{equation}\label{eq:defn.theta}
      \theta:=
      -\Lambda\bigg[B^{-1}
      + \frac12\,V(\mathcal{C}_\zeta + \mathcal{C}_{\bar\zeta})V^* \bigg]\Lambda,
    \end{equation}
    and let $\Theta$ be its maximal realization, \ie
    \begin{equation*}
      \Theta \varphi := \theta \varphi, \quad
      \dom \Theta := \{ \varphi \in L^2(\Sigma;\C^2) \mid \theta\varphi
      \in L^2(\Sigma;\C^2)\}.
    \end{equation*}
    Then
    \begin{equation}\label{eq:domain.D.invertible.bB}
      \dom \ccD_{\eta,\tau,\lambda} =
      \{f \in \dom S^* \mid \Gamma_0 f \in \dom \Theta, \, \Gamma_1 f = \Theta
      \Gamma_0 f\}.
    \end{equation}
  \item\label{item:non.invertible.bB.+} 
    Assume $\eta,\tau,\lambda \in \R$ and $\eta = \pm \sqrt{\tau^2 + \lambda^2} \neq 0$.
    Then there exist 
    \begin{equation*}
      \Pi_\pm: L^2(\Sigma; \mathbb{C}^2) \to L^2(\Sigma),
      \quad
      \Pi_\pm^* : L^2(\Sigma) \to L^2(\Sigma; \mathbb{C}^2),
    \end{equation*}
    such that $\Pi_\pm^* \Pi_\pm$ are orthogonal projectors and,
  defining $\theta_{\pm} \in \Psi_{\Sigma}^1$ by
  \begin{equation*}
    \theta_{\pm} :=
    - \Lambda \bigg[ \frac{1}{2\eta} \bI
    + \Pi_{\pm} \,\frac12\,
    V(\mathcal{C}_\zeta + \mathcal{C}_{\bar\zeta})V^*
    \Pi_{\pm}^*\bigg]\Lambda,
  \end{equation*}
  and letting $\Theta_{\pm}$ be its maximal realization, \ie
  \begin{equation*}
    \Theta_{\pm} \varphi := \theta_{\pm} \varphi, \quad
    \dom \Theta_{\pm} := \{ \varphi \in L^2(\Sigma) \mid
    \theta_{\pm}\varphi \in L^2(\Sigma)\},
  \end{equation*}
  we have
  \begin{equation}\label{eq:domain.D.singular.bB}
    \begin{split}
    \dom \ccD_{\eta,\tau,\lambda} =
    \{ f \in \dom S^* \mid     
    \Pi_{\pm} \Gamma_0 f
    \in\dom\Theta_{\pm},
    \, \Pi_{\pm} \Gamma_1 f = \Theta_{\pm}
    \Pi_{\pm} \Gamma_0 f, \, \, \, \, 
    \\
    (\bI  - \Pi_{\pm}^* \Pi_{\pm}) \Gamma_0 f=0 \}. 
  \end{split}
\end{equation}
\end{enumerate}
\end{proposition}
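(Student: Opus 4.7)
The plan is to translate the transmission boundary condition of~\eqref{eq:defn.D.shell.domain} into an equation relating $\Gamma_0 f$ and $\Gamma_1 f$, and then solve it for $\Gamma_1 f$ in terms of $\Gamma_0 f$. Using~\eqref{eq:Pauli.squares} and~\eqref{eq:propr.V}, the transmission condition in~\eqref{eq:defn.D.shell.domain} is equivalent to
\begin{equation*}
-iV(\sigma\cdot\mathbf{n})(\cT_+^D f_+ - \cT_-^D f_-) = \tfrac12 BV(\cT_+^D f_+ + \cT_-^D f_-).
\end{equation*}
The left-hand side equals $-\Lambda\Gamma_0 f$ by~\eqref{eq:defn.Gamma}, and solving the definition of $\Gamma_1 f$ for $V(\cT_+^D f_+ + \cT_-^D f_-)$ produces
\begin{equation*}
V(\cT_+^D f_+ + \cT_-^D f_-) = 2\Lambda^{-1}\Gamma_1 f + V(\mathcal{C}_\zeta+\mathcal{C}_{\bar\zeta})V^*\Lambda\Gamma_0 f.
\end{equation*}
Substituting yields the master identity
\begin{equation*}
-\Lambda\Gamma_0 f = B\Lambda^{-1}\Gamma_1 f + \tfrac12 BV(\mathcal{C}_\zeta+\mathcal{C}_{\bar\zeta})V^*\Lambda\Gamma_0 f,
\end{equation*}
and every step above is reversible, so this identity is equivalent to the transmission condition.

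For part~\ref{itm:different.from.zero}, the assumption $d\neq 0$ on $\Sigma$ yields $B^{-1}\in C^{\infty}(\Sigma;\C^{2\times 2})$. Multiplying the master identity on the left by $-B^{-1}$ and then by $\Lambda$ produces directly $\Gamma_1 f = \theta\Gamma_0 f$ with $\theta$ as in~\eqref{eq:defn.theta}; the converse follows by retracing the steps. Since $B^{-1}$ is a smooth matrix multiplication (thus of order zero), $V(\mathcal{C}_\zeta+\mathcal{C}_{\bar\zeta})V^*\in\Psi_\Sigma^0$ by~\eqref{Lambda_C_z_Lambda}, and $\Lambda\in\Psi_\Sigma^{1/2}$, \Cref{prop:pseudoimpo}\ref{itm:22} gives $\theta\in\Psi_\Sigma^1$. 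This proves~\eqref{eq:domain.D.invertible.bB}.

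For part~\ref{item:non.invertible.bB.+}, the condition $\eta = \pm\sqrt{\tau^2+\lambda^2} \neq 0$ forces $d=0$ while $B\neq 0$, so $B$ is a real symmetric rank-one matrix with $\tr B = 2\eta$; equivalently, $B = 2\eta P$ where $P$ is the orthogonal projector in $\C^2$ onto the (one-dimensional) range of $B$. Choosing a unit vector $e\in\C^2$ spanning this range, we set $\Pi_\pm u := \langle u, e\rangle_{\C^2}$ for $u\in L^2(\Sigma;\C^2)$, the subscript distinguishing the two signs of $\eta$. Then $\Pi_\pm\Pi_\pm^* = \bI_{L^2(\Sigma)}$, $\Pi_\pm^*\Pi_\pm = P$, hence $B = 2\eta\,\Pi_\pm^*\Pi_\pm$ and $\Pi_\pm B = 2\eta\,\Pi_\pm$. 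Because $P$ is a constant matrix it commutes with $\Lambda$ (and so do $\Pi_\pm$, $\Pi_\pm^*$). The master identity forces $\Lambda\Gamma_0 f$ to lie pointwise in the range of $B$, which translates to the compatibility $(\bI-\Pi_\pm^*\Pi_\pm)\Gamma_0 f = 0$. Projecting the master identity by $\Pi_\pm$ and using this compatibility to write $\Lambda\Gamma_0 f = \Pi_\pm^*\Lambda\Pi_\pm\Gamma_0 f$ converts it, after dividing by $2\eta$ and multiplying by $-\Lambda$, into $\Pi_\pm\Gamma_1 f = \theta_\pm\Pi_\pm\Gamma_0 f$. Conversely, applying $\Pi_\pm^*$ to this identity and invoking the compatibility together with $B = 2\eta\,\Pi_\pm^*\Pi_\pm$ reassembles the master identity, yielding~\eqref{eq:domain.D.singular.bB}.

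The principal obstacle is the rank-one analysis in part~\ref{item:non.invertible.bB.+}: one must pick the correct scalar-valued projectors $\Pi_\pm$ (whose commutation with $\Lambda$ relies crucially on the hypothesised constancy of $\eta,\tau,\lambda$) and verify that the splitting of the master identity into its range component (governed by $\Pi_\pm$) and its kernel component (governed by $\bI - \Pi_\pm^*\Pi_\pm$) is genuinely equivalent to the original equation, not merely a consequence of it. Finally, all identities extend to the appropriate $H^{-1/2}$ and $L^2$ scales by~\Cref{prop:pseudoimpo}\ref{item:extension}, so no further distributional subtleties enter.
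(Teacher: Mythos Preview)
Your proof is correct and follows essentially the same approach as the paper: you derive the same ``master identity'' relating $\Gamma_0 f$ and $\Gamma_1 f$ from the transmission condition, then invert $B$ in part~\ref{itm:different.from.zero} and, in part~\ref{item:non.invertible.bB.+}, project onto the one-dimensional range of the rank-one matrix $B=2\eta P$. The only cosmetic difference is that the paper constructs $\Pi_\pm$ via an explicit diagonalizing unitary $\bU$ for $B$ rather than directly as the projector onto $\mathrm{ran}\,B$, but the resulting objects and the logical flow are identical.
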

\begin{remark}
 In the case that $\eta = \pm \sqrt{\tau^2 + \lambda^2} = 0$, 
 $\ccD_{\eta,\tau,\lambda}$ is in fact the free Dirac operator $\ccD_0$, defined in \Cref{sec:free.dirac}.
\end{remark}

\begin{proof}
  From \eqref{eq:defn.Gamma}, we see that
  \begin{align*}
i (\sigma\cdot \mathbf{n})\,\big( \cT_+^D f_+ - \cT_-^D f_-\big)&=V^* \Lambda \Gamma_0 f,\\
\dfrac{1}{2}\, \big( \cT_+^D f_+ + \cT_-^D f_-\big)&=V^* \Lambda^{-1}\Gamma_1 f
+\dfrac{1}{2}\,(\mathcal{C}_\zeta+\mathcal{C}_{\Bar\zeta})V^* \Lambda \Gamma_0 f,
\end{align*}
so the transmission condition in \eqref{eq:defn.D.shell.domain}
rewrites as follows:
\begin{equation}
-V^* \Lambda\Gamma_0 f = (\eta\bI_2 + \tau\sigma_3 +\lambda(\sigma\cdot\mathbf{t})) \Big(V^*\Lambda^{-1}\Gamma_1 f
+\dfrac{1}{2}\,(\mathcal{C}_\zeta+\mathcal{C}_{\Bar\zeta}) V^* \Lambda \Gamma_0 f\Big).
\end{equation}
Multiplying the last equation  by $V$ and then using \eqref{eq:propr.V} together with the identity $V^* V = V V^* = \bI_2$, we get
\begin{equation}\label{eq:transmission.rewritten}
B \Lambda^{-1}\Gamma_1 f = - \Big( \bI_2
+\dfrac{1}{2}B\,V(\mathcal{C}_\zeta+\mathcal{C}_{\Bar\zeta}) V^* \Big) \Lambda \Gamma_0 f.
\end{equation}
We prove now \ref{itm:different.from.zero}.
In the case that $d(x) \neq 0$ 
 the matrix $\bB = \bB(x)$ is invertible for all $x\in\Sigma$. 
Thanks to \eqref{eq:transmission.rewritten},  
we obtain the representation in \eqref{eq:domain.D.invertible.bB}.

Next, we pass to the proof of  \ref{item:non.invertible.bB.+}.
  Let
  \begin{equation*}
    \Xi_+, \Xi_-: L^2(\Sigma; \mathbb{C}^2) \to L^2(\Sigma),
    \quad \Xi_+\begin{pmatrix} \varphi_1\\ \varphi_2\end{pmatrix}
    := \varphi_1,
    \quad
    \Xi_-\begin{pmatrix} \varphi_1\\ \varphi_2\end{pmatrix}
    := \varphi_2,
  \end{equation*}
  and $\bU$ be the unitary matrix such that
  \begin{equation}\label{eq:UBU}
    \bB = \bU^*
    \begin{pmatrix}
      \eta + \sqrt{\tau^2 + \lambda^2 } & 0 \\
      0 &       \eta - \sqrt{\tau^2 + \lambda^2 }
    \end{pmatrix}
    \bU.
  \end{equation}
  Finally, let $\Pi_{\pm}:= \Xi_\pm \bU$.
  One sees immediately that $\Pi_{\pm}^* \Pi_{\pm}$ are orthogonal
  projectors in $L^2(\Sigma;\C^2)$, and that $\Pi_{\pm}^*
  \Pi_{\pm}(L^2(\Sigma;\C^2))\subset L^2(\Sigma;\C^2)$ is
  isometrically isomorphic
  to $\Pi_\pm(L^2(\Sigma;\C^2)) \subset L^2(\Sigma)$.

  We give  only a proof in the case that $\eta = \sqrt{\tau^2 +
  \lambda^2}$, the other case being analogous.
  From \eqref{eq:transmission.rewritten},  we infer that
\begin{equation}\label{eq:boundary.1}
  \begin{split}
  -\Lambda \Gamma_0 f = &
  \bU^*
  \begin{pmatrix}
    2\eta & 0 \\
    0 & 0
  \end{pmatrix}
  \bU
  \Big(\Lambda^{-1}\Gamma_1 f
  +\frac{1}{2}\,V(\mathcal{C}_\zeta+\mathcal{C}_{\Bar\zeta})V^*
  \Lambda \Gamma_0 f\Big)
  \\
  = &
  2 \eta \, \Pi_+^* \Pi_+ 
  \Big(\Lambda^{-1}\Gamma_1 f
  +\frac{1}{2}\,V(\mathcal{C}_\zeta+\mathcal{C}_{\Bar\zeta})V^*
  \Lambda \Gamma_0 f\Big).
\end{split}
\end{equation}
We will show that this equation is equivalent to
\eqref{eq:domain.D.singular.bB}.
Multiplying \eqref{eq:boundary.1} by $\bI-\Pi_+^* \Pi_+$ we get
\begin{equation}\label{eq:project.1}
  (\bI-\Pi_+^* \Pi_+) \Lambda \Gamma_0 f = 0.
\end{equation}
Since $\Pi_+ \Pi_+^* = \bI$, multiplying \eqref{eq:boundary.1} by $\Pi_+$
we get
\begin{equation*}
  \Pi_+ \left[ \frac{1}{2\eta}\bI 
    +\frac{1}{2}\,V(\mathcal{C}_\zeta+\mathcal{C}_{\Bar\zeta})V^*
  \right] \Lambda \Gamma_0 f
  =
  - \Pi_+ \Lambda^{-1} \Gamma_1 f.
\end{equation*}
Thanks to \eqref{eq:project.1}, we have
\begin{equation}\label{eq:project.2}
   \left[ \frac{1}{2\eta} \bI
     +\Pi_+ \frac{1}{2}\,V(\mathcal{C}_\zeta+\mathcal{C}_{\Bar\zeta})V^*
     \Pi_+^*
  \right] \Pi_+ \Lambda \Gamma_0 f
  =
  - \Pi_+ \Lambda^{-1} \Gamma_1 f.
\end{equation}
Since $\eta,\tau,\lambda \in \R$, $\Lambda$ commutes with $\Pi_+$ and $\Pi_+^*$. Therefore, taking bijectivity of $\Lambda$ into account, \eqref{eq:project.1} and \eqref{eq:project.2} yield
\begin{equation*}
  \begin{split}
     & (\bI-\Pi_+^* \Pi_+)  \Gamma_0 f = 0, \\
     & \Pi_+  \Gamma_1 f = -\Lambda \left[ \frac{1}{2\eta} \bI
       +\Pi_+ \frac{1}{2}\,V(\mathcal{C}_\zeta+\mathcal{C}_{\Bar\zeta})V^*
       \Pi_+^*
     \right]  \Lambda  \, \Pi_+\Gamma_0 f,
  \end{split}
\end{equation*}
that is, we get the conditions in \eqref{eq:domain.D.singular.bB}.
\end{proof}

In the \emph{non-critical} case, $\mathfrak{C}(\eta,\tau,\lambda)\neq 0$ everywhere on $\Sigma$, with $\mathfrak{C}(\eta,\tau,\lambda)$ defined as in~\eqref{eq:non-critical.condition},
we can show the self-adjointness of
$\ccD_{\eta,\tau,\lambda}$ using  \Cref{thm:boundary.triple.abstract} and
\Cref{prop:boundary.to.dirac},  together with the self-adjointness of
the operator $\Theta$.   The proof is an adaptation of the proof of \cite[Lemma
  4.5]{behrndt2019two} {that takes the new interaction into account}.
\begin{lemma} \label{lem:dom_Theta}
  Let $\eta,\tau,\lambda \in C^{\infty}(\Sigma;\R)$ {be such that}
  $\mathfrak{C}(\eta,\tau,\lambda)\neq 0$ for all $x \in \Sigma$.
  Then the following hold:
  \begin{enumerate}[label=$({\roman*})$]
  \item\label{itm:non-critical.invertible} if $d(x)
    \neq 0$ for all $x \in \Sigma$,
    then $\dom \Theta =
      H^1(\Sigma;\C^2)$ and $\Theta$ is self-adjoint in
      $L^2(\Sigma;\C^2)$,
    \item\label{itm:other.case}
      if $\eta,\tau,\lambda \in \R$ and $\eta = \pm \sqrt{\tau^2 + \lambda^2}\neq 0$, then $\dom \Theta_\pm
      = H^1 (\Sigma)$ and $\Theta_\pm$ is self-adjoint in $L^2 (\Sigma)$.
\end{enumerate}
\end{lemma}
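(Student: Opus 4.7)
The plan is to realise $\theta$ (resp.\ $\theta_\pm$) as an elliptic periodic pseudo-differential operator of order one on $\Sigma$, deduce the domain identifications from elliptic regularity, and then conclude self-adjointness from formal symmetry combined with this maximality.

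For part $(i)$, the assumption $d\neq 0$ everywhere on $\Sigma$ ensures that $B$ is pointwise invertible with smooth inverse, so multiplication by $B^{-1}$ belongs to $\Psi^0_\Sigma$. Combining this with $V,V^*\in \Psi^0_\Sigma$, $\mathcal{C}_\zeta+\mathcal{C}_{\bar\zeta}\in \Psi^0_\Sigma$ (by \eqref{Lambda_C_z_Lambda}), $\Lambda\in \Psi^{1/2}_\Sigma$, and Proposition~\ref{prop:pseudoimpo}\ref{itm:22}, one has $\theta\in \Psi^1_\Sigma$. To establish ellipticity I would compute the principal symbol of $\theta$: by \eqref{Lambda_C_z_Lambda}, $\tfrac12(\mathcal{C}_\zeta+\mathcal{C}_{\bar\zeta})$ agrees modulo $\Psi^{-1}_\Sigma$ with the off-diagonal operator having entries $\tfrac12 C_\Sigma\overline{T}$ and $\tfrac12 T C'_\Sigma$, and since $C_\Sigma$ and $C'_\Sigma$ have principal symbol $\sgn(\xi)$ while $|T|=1$, an explicit conjugation by $V$ collapses its matrix part to $\sigma_1$. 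The principal symbol of $-\theta$ at $(x,\xi)$ therefore equals $|\xi|\bigl(B^{-1}(x)+\tfrac{\sgn(\xi)}{2}\sigma_1\bigr)$.

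Using $B^{-1}=d^{-1}\begin{pmatrix}\eta-\tau & -\lambda\\ -\lambda & \eta+\tau\end{pmatrix}$, a direct calculation gives $\det\bigl(B^{-1}+\tfrac{s}{2}\sigma_1\bigr)=\frac{4(1+s\lambda)-d}{4d}$ for $s\in\{-1,+1\}$, so non-vanishing for both signs is equivalent to $(4-d)^2-16\lambda^2\neq 0$, i.e.\ to $\mathfrak{C}(\eta,\tau,\lambda)\neq 0$. Hence $\theta$ is elliptic on the compact curve $\Sigma$, and standard elliptic regularity yields $\dom\Theta=H^1(\Sigma;\C^2)$. Symmetry of $\Theta$ in $L^2(\Sigma;\C^2)$ follows from the fact that $B^{-1}$ is Hermitian, $V$ is pointwise unitary, $\Lambda$ is self-adjoint, and $\mathcal{C}_{\bar\zeta}=(\mathcal{C}_\zeta)^*$; since ellipticity implies that $\Theta$ coincides with its maximal realisation, one concludes $\Theta=\Theta^*$.

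Part $(ii)$ follows the identical scheme: on the range of the projection $\Pi_\pm$ the role of $B^{-1}$ is played by the scalar $\frac{1}{2\eta}$ (the non-trivial eigenvalue of $B$ being $2\eta$ in view of \eqref{eq:UBU}), the analogous scalar principal-symbol computation yields ellipticity again precisely under $\mathfrak{C}(\eta,\tau,\lambda)\neq 0$, and maximality plus symmetry give $\Theta_\pm=\Theta_\pm^*$ with $\dom\Theta_\pm=H^1(\Sigma)$. The only genuinely non-routine step is the principal-symbol computation in part $(i)$, together with the recognition that its determinant reproduces exactly the non-critical quantity $\mathfrak{C}(\eta,\tau,\lambda)$; everything else is direct pseudo-differential calculus combined with the symmetry properties recorded in the preliminaries.
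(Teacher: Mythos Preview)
Your proposal is correct and follows essentially the same approach as the paper: identify $\theta$ (resp.\ $\theta_\pm$) as an elliptic first-order periodic pseudodifferential operator precisely under the non-critical condition, deduce $\dom\Theta=H^1$ from this, and combine with formal symmetry to conclude self-adjointness. The only difference is packaging---you compute the principal symbol abstractly and invoke elliptic regularity as a black box, whereas the paper builds the parametrix by hand (multiplying $P$ in \eqref{eq:P} by an explicit matrix and using $C_\Sigma C'_\Sigma-\bI,\,C_\Sigma-C'_\Sigma\in\Psi^{-\infty}_\Sigma$)---but both routes lead to the same determinant $\tfrac{1}{d^2}\mathfrak C(\eta,\tau,\lambda)$ in \eqref{eq:condition.sa}.
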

\begin{proof}
  Since the multiplication by $V$ is bounded in $L^2(\Sigma;\C^2)$, from \eqref{Lambda_C_z_Lambda} we get
\begin{equation}\label{eq:self-adjointness.1}
  V(\mathcal{C}_\zeta+\mathcal{C}_{\Bar\zeta})V^* =
  V
  \begin{pmatrix}
    0 & C_\Sigma \overline{T} \\
    T C_\Sigma' & 0
  \end{pmatrix}
  V^* + \widehat\Psi_1 =
  \begin{pmatrix}
    0 & C_\Sigma \\
    C_\Sigma' & 0
  \end{pmatrix}
  + \widehat\Psi_1,
\end{equation}
with $\widehat \Psi_1 \in \Psi_{\Sigma}^{-1}$.

We start the proof of \ref{itm:non-critical.invertible} by putting $\Theta_1:=\Theta\upharpoonright
H^1(\Sigma;\C^2)$.
Since $\theta \in \Psi_{\Sigma}^1$, the operator $\Theta_1$ is
well defined as an operator in $L^2(\Sigma;\C^2)$. To show
\ref{itm:non-critical.invertible} we {prove} that $\Theta_1 = \Theta$ and
$\Theta_1$ is self-adjoint in $L^2(\Sigma;\C^2)$.
Since $(V(\mathcal{C}_\zeta+\mathcal{C}_{\Bar\zeta})V^*)^* =
V(\mathcal{C}_{\Bar\zeta}+ \mathcal{C}_\zeta)V^*$ and $\Lambda$ is
self-adjoint as an operator in $L^2(\Sigma)$, $\Theta_1$ is symmetric.
Moreover, since $(\Theta\upharpoonright C^{\infty}) \subset {\Theta_1}$,
we have $\Theta_1 \subset \Theta_1^* \subset (\Theta\upharpoonright
C^{\infty})^*$. We conclude that $\Theta_1 \subset \Theta$ since
$\Theta$ is the maximal realization of 
$\theta$, and so $(\Theta\upharpoonright
C^{\infty})^* = \Theta$.

Hence, to show that $\Theta_1 = \Theta$, it is now sufficient to prove
that $\Theta \subset \Theta_1$, that
is to say $\dom \Theta \subset \dom \Theta_1 = H^1(\Sigma;\C^2)$.
We fix $\varphi \in \dom \Theta$.
Thanks to 
\Cref{prop:boundary.to.dirac} \ref{itm:different.from.zero}  and
\eqref{eq:self-adjointness.1} we have
\begin{equation*}
  \theta \varphi =
  - \Lambda P \Lambda \varphi + \widehat \Psi_2 \varphi,
\end{equation*}
with $\widehat \Psi_2 \in \Psi_{\Sigma}^0$ and  
\begin{equation}\label{eq:P}
  P =  \frac{1}{d}
  \begin{pmatrix}
    \eta - \tau & -\lambda  \\
    -\lambda & \eta +\tau
  \end{pmatrix} +
  \frac12
  \begin{pmatrix}
    0 & C_{\Sigma} \\
     C_{\Sigma}' & 0
  \end{pmatrix}
  =
  \frac12 \begin{pmatrix}
    \frac{2(\eta - \tau)}{d} & C_{\Sigma} - \frac{2\lambda}{d} \\
    C_{\Sigma}' - \frac{2\lambda}{d} & \frac{2(\eta + \tau)}{d}
    \end{pmatrix}.
\end{equation}
We have $\Lambda P \Lambda \varphi \in L^2(\Sigma;\C^2)$ and $P
\Lambda \varphi \in H^{\frac12}(\Sigma;\C^2)$ because $\Lambda :
H^{\frac12}(\Sigma;\C^2) \to L^{2}(\Sigma;\C^2)$ is an isomorphism.
Since $C_{\Sigma}, C_{\Sigma}'\in \Psi_{\Sigma}^0$, these
pseudodifferential operators give rise to bounded operators in
$H^{\frac12}(\Sigma;\C^2)$, and this implies that
\begin{equation*}
  \begin{split}
    & \frac12
    \begin{pmatrix}
      \frac{2(\eta + \tau)}{d} & - C_{\Sigma} - \frac{2\lambda}{d} \\
     - C_{\Sigma}' - \frac{2\lambda}{d} & \frac{2(\eta - \tau)}{d}
    \end{pmatrix}
    P
    \Lambda \varphi
    \\
    &
    = \frac{1}{d^2}
    \begin{pmatrix}
      d + 2\lambda^2 - \frac{d^2}{4} C_\Sigma C_{\Sigma}'+\frac{\lambda d}{2}(C_\Sigma-C_\Sigma') &
      -2(\eta + \tau)\lambda \\
      -2(\eta - \tau)\lambda &       d + 2\lambda^2 - \frac{d^2}{4}  C_\Sigma' C_\Sigma +\frac{\lambda d}{2}(C_\Sigma'-C_\Sigma)
    \end{pmatrix}
    \Lambda \varphi
  \end{split}
\end{equation*}
belongs to $H^{\frac12}(\Sigma;\C^2)$.
Using \eqref{eq:psidiff_inf}, we conclude that
\begin{equation*}
  M \Lambda \varphi
  :=
  \frac{1}{d^2}
  \begin{pmatrix}
      \eta^2 - \tau^2 + \lambda^2 - \frac{d^2}{4} &
      {-}2(\eta + \tau)\lambda \\
      {-}2(\eta - \tau)\lambda &       \eta^2 - \tau^2 + \lambda^2 - \frac{d^2}{4}
    \end{pmatrix}
    \Lambda \varphi \in H^{\frac12}(\Sigma;\C^2).
  \end{equation*}
  Note that
  \begin{equation}\label{eq:condition.sa}
    \det M =\frac{1}{d^4}\left[\left(
        \eta^2-\tau^2+\lambda^2-\frac{d^2}{4}
      \right)^2
      -4(\eta^2-\tau^2)\lambda^2\right]
    =
    \frac{1}{d^2}
    \mathfrak{C}(\eta,\tau,\lambda) 
  \end{equation}
 and, by our hypothesis, $M$ is invertible.   Therefore, we get $\Lambda \varphi \in
H^{\frac12}(\Sigma;\C^2)$ and $\varphi  \in H^1(\Sigma;\C^2)$, because $\Lambda :
H^{1}(\Sigma;\C^2) \to H^{\frac12}(\Sigma;\C^2)$ is an isomorphism.
This completes the proof of the case \ref{itm:non-critical.invertible}. 

Now we pass to the proof of  \ref{itm:other.case}.
Arguing as in the proof of \ref{itm:non-critical.invertible}, let
$\Theta_{\pm,1}:=\Theta_\pm\upharpoonright H^1(\Sigma)$. It is true that
$\Theta_{\pm,1} \subset \Theta_{\pm,1}^* \subset \Theta_\pm$, and  so we conclude
the proof if we show that $\dom \Theta_\pm \subset \dom \Theta_{\pm,1} =
H^1(\Sigma)$. By \Cref{prop:boundary.to.dirac}
\ref{item:non.invertible.bB.+} and \eqref{eq:self-adjointness.1}, we have
\begin{equation*}
  \Theta_\pm \varphi = -\Lambda \bigg[ \frac{1}{2 \eta} \bI + \dfrac{1}{2} \Pi_\pm \begin{pmatrix}
     0 & C_\Sigma \\
       C_\Sigma' & 0
    \end{pmatrix}  \Pi_\pm^* \bigg]\Lambda \varphi  + \widehat{\Psi}
    \varphi\quad
    \text{ for all }\varphi \in \dom \Theta_\pm,
\end{equation*}
with some symmetric operator $\widehat{\Psi} \in \Psi_\Sigma^0$. Since
$\Lambda : H^{\frac12}(\Sigma)\to L^2(\Sigma)$ is an isomorphism, the
last equation implies that
\begin{equation}\label{eq:d.zero.1}
  \bigg[ \frac{1}{2 \eta}\bI + \dfrac{1}{2} \Pi_\pm \begin{pmatrix}
     0 & C_\Sigma \\
       C_\Sigma' & 0
    \end{pmatrix}  \Pi_\pm^* \bigg]\Lambda \varphi \in H^{\frac12}(\Sigma).
\end{equation}
For the unitary matrix $\bU$ in \eqref{eq:UBU}, we may choose
\begin{equation*}
  \bU =
  \begin{cases}
    \bI_2
    \quad &\text{ if } \lambda = 0,
    \\
    ((\tau+\sqrt{\tau^2 +\lambda^2})^2+\lambda^2)^{-\frac12}
    \begin{pmatrix}
      \tau + \sqrt{\tau^2 +\lambda^2} &  \lambda \\
      -\lambda  &  \tau + \sqrt{\tau^2 +\lambda^2}
    \end{pmatrix}
    \quad &\text{ if } \lambda \neq 0.
  \end{cases}
\end{equation*}

When $\lambda = 0$ our choice of $\bU$ gives $\Pi_\pm =
\Xi_\pm$, and from \eqref{eq:d.zero.1} we get
\begin{equation}\label{eq:2}
   \bigg[ \frac{1}{2 \eta} \bI + \dfrac{1}{2} \Pi_\pm \begin{pmatrix}
     0 & C_\Sigma \\
       C_\Sigma' & 0
    \end{pmatrix}  \Pi_\pm^* \bigg]\Lambda \varphi
=
\frac{1}{2\eta} \Lambda \varphi \in H^{\frac12}(\Sigma).
\end{equation}
Since $\Lambda : H^{1}(\Sigma) \to
H^{\frac12}(\Sigma)$ is an isomorphism, we get $\varphi \in
H^1(\Sigma)$.
If $\lambda \neq 0$ then \eqref{eq:d.zero.1} yields
\begin{equation*}
  \bigg[ \frac{1}{2 \eta} \bI + \dfrac{1}{2} \Pi_\pm \begin{pmatrix}
    0 & C_\Sigma \\
    C_\Sigma' & 0
  \end{pmatrix}  \Pi_\pm^* \bigg]\Lambda \varphi
  =
  \left[
  \frac{1}{2\eta}\bI \pm \frac{\lambda}{4\eta} (C_\Sigma + C_\Sigma')\right] \Lambda \varphi
  \in H^{\frac12}(\Sigma).
\end{equation*}
Since $C_\Sigma,C_\Sigma \in \Psi_\Sigma^0$, we get
\begin{equation*}
  \Big(\bI \mp \frac{\lambda}{2} (C_\Sigma + C_\Sigma')\Big) 
  \Big(\bI \pm \frac{\lambda}{2} (C_\Sigma + C_\Sigma')\Big) \Lambda \varphi
  =
  \left[\bI -\frac{\lambda^2}{4}(C_\Sigma + C_\Sigma')^2\right] \Lambda \varphi
  \in H^{\frac12}(\Sigma).
\end{equation*}
 
Taking \eqref{eq:psidiff_inf} into account, we finally obtain
\begin{equation*}
    (\bI -{\lambda^2}) \Lambda \varphi \in H^{\frac12}(\Sigma).
\end{equation*}
We conclude that $\Lambda \varphi \in H^{\frac12}(\Sigma)$ since for $d=0$
the condition $\mathfrak{C}(\eta,\tau,\lambda)\neq 0$
forces $\lambda^2 \neq 1$. Since $\Lambda : H^{1}(\Sigma) \to
H^{\frac12}(\Sigma)$ is an isomorphism, we get $\varphi \in
H^1(\Sigma)$ also in this case.
\end{proof}

We are now ready to show the
self-adjointness of $\ccD_{\eta,\tau,\lambda}$ in the non-critical case.
\begin{theorem}\label{thm:self-adjointess.noncritical}
  Let $\eta,\tau,\lambda \in C^{\infty}(\Sigma;\R)$ be such that
  $\mathfrak{C}(\eta,\tau,\lambda)(x)\neq 0$. Moreover, let either $d(x)\neq 0$ for all $x\in \Sigma$,
  or let $\eta,\tau,\lambda$ be constant and such that $d = 0$.
  Then
  $\ccD_{\eta,\tau,\lambda}$, defined by \eqref{eq:defn.D.shell.domain} and \eqref{eq:defn.D.shell}, is self-adjoint in $L^2(\R^2;\C^2)$ with
  domain $\dom \ccD_{\eta,\tau,\lambda} \subset H^1(\R^2 \setminus \Sigma;\C^2)$.
  Moreover, for all $z \in \rho(\ccD_{\eta,\tau,\lambda}) \cap
  \rho(\ccD_0)$ the operator $\bI_2 + (\eta\bI_2 + \tau\sigma_3
  +\lambda(\sigma\cdot\mathbf{t}))\mathcal{C}_z$ is bounded and
  boundedly invertible in $H^{\frac12}(\Sigma;\C^2)$ and
  \begin{equation} \label{krein_noncritical}
    \begin{split}
      &(\ccD_{\eta,\tau,\lambda} - z)^{-1} \\
      &= (\ccD_0 - z)^{-1} - \Phi_z \big(
      \bI_2 + (\eta \bI_2 + \tau \sigma_3 + \lambda(\sigma\cdot\mathbf{t})
      ) \mathcal{C}_z \big)^{-1} (\eta \bI_2 + \tau \sigma_3 +
      \lambda(\sigma\cdot\mathbf{t})) \Phi_{\Bar{z}}'.
    \end{split}
  \end{equation}
\end{theorem}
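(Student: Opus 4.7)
The plan is to realize $\ccD_{\eta,\tau,\lambda}$ within the boundary-triple framework of Proposition~\ref{prop:boundary.triples.for.Dirac} and then read off all three assertions from Theorem~\ref{thm:boundary.triple.abstract}. First, in the generic case $d(x)\neq 0$ I would invoke Proposition~\ref{prop:boundary.to.dirac}\ref{itm:different.from.zero} to identify $\ccD_{\eta,\tau,\lambda}$ with the extension $S^*_{\bI,\Theta}$, where $\Theta$ is the maximal realization of the symbol~\eqref{eq:defn.theta}; in the constant case $d=0$ I would use Proposition~\ref{prop:boundary.to.dirac}\ref{item:non.invertible.bB.+} with $\Pi_{\pm}$ and $\Theta_{\pm}$ instead. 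Lemma~\ref{lem:dom_Theta} supplies the crucial ingredient: $\Theta$ (respectively $\Theta_{\pm}$) is self-adjoint on its $H^1$-domain, and Theorem~\ref{thm:boundary.triple.abstract} immediately upgrades this to self-adjointness of $\ccD_{\eta,\tau,\lambda}$ in $L^2(\R^2;\C^2)$.

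For the $H^1$-regularity, the case $d\neq 0$ is immediate: $\dom\Theta=H^1(\Sigma;\C^2)$ forces $\Gamma_0 f\in H^1(\Sigma;\C^2)$ for $f\in\dom\ccD_{\eta,\tau,\lambda}$, and Lemma~\ref{lem:regularity} concludes. In the constant case $d=0$ the combined conditions in~\eqref{eq:domain.D.singular.bB}, $(\bI-\Pi_\pm^*\Pi_\pm)\Gamma_0 f=0$ and $\Pi_\pm\Gamma_0 f\in\dom\Theta_\pm=H^1(\Sigma)$, still imply $\Gamma_0 f=\Pi_\pm^*\Pi_\pm\Gamma_0 f\in H^1(\Sigma;\C^2)$ (since $\Pi_\pm^*$ is multiplication by smooth matrix coefficients), so Lemma~\ref{lem:regularity} again yields $f\in H^1(\R^2\setminus\Sigma;\C^2)$.

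The Krein formula is obtained by direct manipulation of Theorem~\ref{thm:boundary.triple.abstract}\ref{item:abstract.Krein}. Introducing $\tilde B:=\eta\bI_2+\tau\sigma_3+\lambda(\sigma\cdot\mathbf{t})$, relation~\eqref{eq:propr.V} reads $V^*BV=\tilde B$, and the unitarity $VV^*=\bI_2$ together with the definitions~\eqref{eq:defn.theta} and~\eqref{eq:defn.Mz} yields the clean identity
\begin{equation*}
\Theta-M_z=-\Lambda V\bigl(\tilde B^{-1}+\mathcal{C}_z\bigr)V^*\Lambda,
\end{equation*}
in which the $\tfrac12(\mathcal{C}_\zeta+\mathcal{C}_{\bar\zeta})$ contributions cancel. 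Combining this with $G_z=\Phi_z V^*\Lambda$ from~\eqref{eq:defn.Gz} and the identity $G_{\bar z}^*=\Lambda V\Phi_{\bar z}'$, derived from $G_z^*=\Gamma_1(\ccD_0-\bar z)^{-1}$ together with~\eqref{eq:defn.Gamma} (noting that $\Gamma_0$ vanishes on $\dom\ccD_0$), the intermediate $\Lambda$-factors telescope and one obtains
\begin{equation*}
G_z(\Theta-M_z)^{-1}G_{\bar z}^*=-\Phi_z\bigl(\tilde B^{-1}+\mathcal{C}_z\bigr)^{-1}\Phi_{\bar z}'=-\Phi_z\bigl(\bI_2+\tilde B\mathcal{C}_z\bigr)^{-1}\tilde B\,\Phi_{\bar z}',
\end{equation*}
which is precisely~\eqref{krein_noncritical}. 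The $H^{1/2}$-boundedness of $\bI_2+\tilde B\mathcal{C}_z$ is clear because $\mathcal{C}_z\in\Psi_\Sigma^0$; its invertibility on $H^{1/2}(\Sigma;\C^2)$ follows because Theorem~\ref{thm:boundary.triple.abstract}\ref{item:abstract.spectrum} forces $\Theta-M_z\colon H^1(\Sigma;\C^2)\to L^2(\Sigma;\C^2)$ to be an isomorphism for $z\in\rho(\ccD_{\eta,\tau,\lambda})\cap\rho(\ccD_0)$, and the isomorphisms induced by $\Lambda$ and $V$ transfer this to an isomorphism $\tilde B^{-1}+\mathcal{C}_z$ on $H^{1/2}(\Sigma;\C^2)$, after which $\bI_2+\tilde B\mathcal{C}_z=\tilde B(\tilde B^{-1}+\mathcal{C}_z)$ is the product of two isomorphisms.

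The delicate step I expect to be the main obstacle is the constant case $d=0$: since $\tilde B$ is then singular, $\tilde B^{-1}$ no longer exists and the telescoping identity must be redone inside $\mathrm{Ran}\,\Pi_\pm^*$. Using~\eqref{eq:UBU} to diagonalize $\tilde B$, one sees that $\Pi_\pm^*\Pi_\pm$ projects precisely onto the $2\eta$-eigenspace of $\tilde B$, on which $\tilde B$ acts as the invertible scalar $2\eta$; the very same algebraic manipulation then goes through on this subspace and again produces~\eqref{krein_noncritical}, with $\bI_2+\tilde B\mathcal{C}_z$ invertible after the corresponding restriction.
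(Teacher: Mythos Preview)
Your proposal is correct and follows essentially the same route as the paper: identify $\ccD_{\eta,\tau,\lambda}$ via Proposition~\ref{prop:boundary.to.dirac}, apply Lemma~\ref{lem:dom_Theta} and Theorem~\ref{thm:boundary.triple.abstract} for self-adjointness, use Lemma~\ref{lem:regularity} for the $H^1$-inclusion, and unwind the abstract Krein formula using the identity $\Theta-M_z=-\Lambda V(\tilde B^{-1}+\mathcal{C}_z)V^*\Lambda$ (equivalently written in the paper as~\eqref{eq:to.be.used}). The paper in fact omits the $d=0$ case entirely (``analogous and will be omitted''), so your sketch there is already more explicit.

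One imprecision in your last paragraph: the theorem asserts bounded invertibility of $\bI_2+\tilde B\mathcal{C}_z$ on all of $H^{1/2}(\Sigma;\C^2)$, not merely ``after the corresponding restriction'' to $\mathrm{Ran}\,\Pi_\pm^*$. This is easily repaired: writing $\tilde B=2\eta P$ with $P=V^*\Pi_+^*\Pi_+V$, the equation $(\bI_2+2\eta P\mathcal{C}_z)y=f$ decouples as $(\bI-P)y=(\bI-P)f$ and $(\bI+2\eta P\mathcal{C}_zP)Py=P(f-2\eta P\mathcal{C}_z(\bI-P)f)$; the second equation is solvable precisely because $\Theta_+-\Pi_+M_z\Pi_+^*$ is bijective, and one checks that the resulting resolvent correction $G_z\Pi_+^*(\Theta_+-\Pi_+M_z\Pi_+^*)^{-1}\Pi_+G_{\bar z}^*$ coincides with $-\Phi_z(\bI_2+\tilde B\mathcal{C}_z)^{-1}\tilde B\,\Phi_{\bar z}'$, as claimed.
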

\begin{proof}
 The proof is analogous to the proof of \cite[Theorem
  4.6]{behrndt2019two}, now when we have set the right
  framework. 
  
  The self-adjointness of $\ccD_{\eta,\tau,\lambda}$  follows from the
  self-adjointness of $\Theta$ and $\Theta_\pm$ in $L^2(\Sigma;\C^2)$
  and $L^2(\Sigma)$ respectively, thanks to
  \Cref{thm:boundary.triple.abstract}.
  Moreover, \Cref{lem:regularity} implies that $\dom
  \ccD_{\eta,\tau,\lambda} \subset H^1(\R^2 \setminus \Sigma;\C^2)$
  since, by Lemma~\ref{lem:dom_Theta}, $\dom \Theta = H^1(\Sigma;\C^2)$ and
  $\dom \Theta_\pm = H^1(\Sigma)$.
  
  We show \eqref{krein_noncritical} in the case that $d(x)\neq 0$ for
  all $x \in \Sigma$.
  By \Cref{thm:boundary.triple.abstract} \ref{item:abstract.Krein},
  $\Theta - M_z$ is boundedly invertible in $L^2(\Sigma;\C^2)$, for $z \in \rho(\ccD_{\eta,\tau,\lambda})\cap \rho(\ccD_0)$, and
  \begin{equation*}
      (\ccD_{\eta,\tau,\lambda} - z)^{-1} = (\ccD_0 - z)^{-1} + G_z \big( \Theta - M_z \big)^{-1} G_{\Bar{z}}^*.
  \end{equation*}
  From the definition of $M_z$ \eqref{eq:defn.Mz}, we get
  \begin{equation}\label{eq:to.be.used}
    \Theta - M_z = -\Lambda V(\eta \bI_2 + \tau \sigma_3 +
    \lambda(\sigma\cdot\mathbf{t}))^{-1}
    (\bI_2 + (\eta \bI_2 + \tau \sigma_3 +
    \lambda(\sigma\cdot\mathbf{t}))\mathcal{C}_z)
    V^* \Lambda.
  \end{equation}
  The operator $\Theta - M_z$ is bijective in $L^2(\Sigma;\C^2)$ when defined on $\dom
  \Theta = H^1(\Sigma;\C^2)$, and $(\bI_2 + (\eta \bI_2 + \tau \sigma_3 +
    \lambda(\sigma\cdot\mathbf{t}))\mathcal{C}_z)$ is well defined and
    bounded in $H^{\frac12}(\Sigma;\C^2)$. Recalling the definition of
    $G_z$ from \eqref{eq:defn.Gz} we get \eqref{krein_noncritical}.
  The case $d=0$ is analogous and will be omitted.
\end{proof}

In the next proposition we gather some basic results on the spectrum
of $\ccD_{\eta,\tau,\lambda}$.
\begin{proposition} \label{prop:spectral.properties.noncritical}
  Let $\eta,\tau,\lambda \in C^{\infty}(\Sigma;\R)$ be such that
  $\mathfrak{C}(\eta,\tau,\lambda)(x)\neq 0$ everywhere on $\Sigma$. Moreover, let either $d(x)\neq 0$ for all $x\in \Sigma$,
  or let $\eta,\tau,\lambda $ be constant and such that $d = 0$.
  Let $\ccD_{\eta,\tau,\lambda}$ be defined as in
  \eqref{eq:defn.D.shell.domain},\eqref{eq:defn.D.shell}.
  Then the following hold:
  \begin{enumerate}[label=$({\roman*})$]
  \item\label{item:spectrum.noncritical.1} We have $\sigma_{ess} (\ccD_{\eta,\tau,\lambda})=(-\infty,
    -\abs{m}] \cup [\abs{m},+\infty\big)$;
    if in particular $m=0$, then $\sigma (\ccD_{\eta,\tau,\lambda}) = \sigma_{ess} (\ccD_{\eta,\tau,\lambda})= \R$.
  \item\label{item:spectrum.noncritical.2} If $m\neq 0$, then $\ccD_{\eta,\tau,\lambda}$ has at most
    finitely many eigenvalues in $\big(-|m|,|m|\big)$.
  \item\label{item:spectrum.noncritical.3} Assume $m\neq 0$. Then $z \in (-|m|, |m|)$ is a discrete
    eigenvalue of  $\ccD_{\eta,\tau,\lambda}$ if and only if
    there exists $\varphi \in H^{\frac{1}{2}}(\Sigma; \mathbb{C}^2)$
    such that $\big(\bI + (\eta \bI + \tau \sigma_3 +
    \lambda(\sigma\cdot\mathbf{t})) \mathcal{C}_z \big) \varphi = 0$.
  \end{enumerate}
\end{proposition}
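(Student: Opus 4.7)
My plan is to prove the three parts in order, leaning heavily on the boundary triple from Proposition~\ref{prop:boundary.triples.for.Dirac} together with the self-adjointness and domain regularity established in Theorem~\ref{thm:self-adjointess.noncritical}. The overarching strategy is: reduce questions about the spectrum of $\ccD_{\eta,\tau,\lambda}$ to questions about the operator $\Theta-M_z$ (or its projected version), then exploit the explicit factorization \eqref{eq:to.be.used}.

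For part \ref{item:spectrum.noncritical.1}, the inclusion $(-\infty,-|m|]\cup[|m|,+\infty)\subset\sigma_{ess}(\ccD_{\eta,\tau,\lambda})$ is immediate from Proposition~\ref{prop:extensions.spectral.properties}\ref{item:extensions.spectral.properties.1}, since $\ccD_{\eta,\tau,\lambda}$ is a self-adjoint extension of $S$ by Theorem~\ref{thm:self-adjointess.noncritical}. For the reverse inclusion, the same theorem gives $\dom\ccD_{\eta,\tau,\lambda}\subset H^1(\R^2\setminus\Sigma;\C^2)$, so Proposition~\ref{prop:extensions.spectral.properties}\ref{item:extensions.spectral.properties.2} (applied with $s=1$) forces the spectrum inside $(-|m|,|m|)$ to be purely discrete and finite; in particular this interval contributes no essential spectrum. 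If $m=0$, then the essential spectrum already covers all of $\R$, hence so does the spectrum. Part \ref{item:spectrum.noncritical.2} is likewise a direct consequence of the same $H^1$-regularity and Proposition~\ref{prop:extensions.spectral.properties}\ref{item:extensions.spectral.properties.2}.

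For part \ref{item:spectrum.noncritical.3}, I would apply Theorem~\ref{thm:boundary.triple.abstract}\ref{item:abstract.point.spectrum} to the boundary triple of Proposition~\ref{prop:boundary.triples.for.Dirac}, which gives $z\in\sigma_p(\ccD_{\eta,\tau,\lambda})$ iff $0\in\sigma_p(\Theta-M_z)$ (in the case $d\neq 0$; in the constant-coefficient case $d=0$, the projected version with $\Pi_\pm$ is used). By part \ref{item:spectrum.noncritical.2}, any such eigenvalue in $(-|m|,|m|)$ is automatically discrete. The key step is then to read off the kernel of $\Theta-M_z$ from the identity
\begin{equation*}
\Theta - M_z = -\Lambda V(\eta \bI_2 + \tau \sigma_3 + \lambda(\sigma\cdot\mathbf{t}))^{-1}\bigl(\bI_2 + (\eta \bI_2 + \tau \sigma_3 + \lambda(\sigma\cdot\mathbf{t}))\mathcal{C}_z\bigr) V^*\Lambda
\end{equation*}
established in the proof of Theorem~\ref{thm:self-adjointess.noncritical}. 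Since $\Lambda:H^1(\Sigma;\C^2)\to H^{1/2}(\Sigma;\C^2)$ and $V$ are isomorphisms, and pointwise multiplication by $(\eta\bI_2+\tau\sigma_3+\lambda(\sigma\cdot\mathbf{t}))^{-1}$ is an isomorphism of $H^{1/2}(\Sigma;\C^2)$ (because $d\neq 0$, or, in the constant $d=0$ case, via a careful reduction through the projections), the substitution $\varphi:=V^*\Lambda\psi\in H^{1/2}(\Sigma;\C^2)$ establishes a bijection between $\ker(\Theta-M_z)\subset\dom\Theta=H^1(\Sigma;\C^2)$ and the $H^{1/2}$-solutions of the equation in the statement.

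The main obstacle is the singular constant-coefficient sub-case $\eta=\pm\sqrt{\tau^2+\lambda^2}\neq 0$ (so $d=0$), in which $B$ is non-invertible and one must work with the projected objects $\Pi_\pm\Gamma_0 f$, $\Theta_\pm$ as in Proposition~\ref{prop:boundary.to.dirac}\ref{item:non.invertible.bB.+}. Here one has to verify that the combined boundary conditions ``$\Pi_\pm\Gamma_0 f\in\dom\Theta_\pm$ and $(\bI-\Pi_\pm^*\Pi_\pm)\Gamma_0 f=0$'' together with $\Pi_\pm\Gamma_1 f=\Theta_\pm\Pi_\pm\Gamma_0 f$ collapse back to the unprojected Birman--Schwinger equation of statement \ref{item:spectrum.noncritical.3}; this is a bookkeeping exercise rather than a conceptual difficulty, using that $\bU$ diagonalizes $B$ and that the kernel direction contributes trivially to the transmission condition. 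Apart from this, one also needs the mapping property that $\mathcal{C}_z$ is bounded on $H^{1/2}(\Sigma;\C^2)$, which is already recorded after \eqref{eq:defn.Cz}, so the equation in \ref{item:spectrum.noncritical.3} makes sense a priori.
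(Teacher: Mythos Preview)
Your proposal is correct and follows essentially the same approach as the paper: both invoke Proposition~\ref{prop:extensions.spectral.properties} together with the $H^1$-regularity from Theorem~\ref{thm:self-adjointess.noncritical} for \ref{item:spectrum.noncritical.1}--\ref{item:spectrum.noncritical.2}, and for \ref{item:spectrum.noncritical.3} both apply Theorem~\ref{thm:boundary.triple.abstract} to reduce to $\ker(\Theta-M_z)$ and then use the factorization \eqref{eq:to.be.used} with the substitution $\varphi=V^*\Lambda\psi$. The paper likewise treats only the case $d\neq 0$ in detail and declares the $d=0$ case ``similar,'' so your additional remarks on the projected setting are a welcome elaboration rather than a departure.
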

\begin{proof}
  The proof is analogous to the proof of
  \cite[Theorem~4.7]{behrndt2019two}.
  Firstly, let us show
  \ref{item:spectrum.noncritical.1} and
  \ref{item:spectrum.noncritical.2}.
  Thanks to \Cref{prop:extensions.spectral.properties}
  \ref{item:extensions.spectral.properties.1},
  $(-\infty,-\abs{m}] \cup [\abs{m},+\infty\big) \subset
  \sigma_{ess}(\ccD_{\eta,\tau,\lambda})$.
  Moreover, since $\dom \ccD_{\eta,\tau,\lambda} \subset H^1(\R^2\setminus \Sigma;\C^2)$, due to \Cref{thm:self-adjointess.noncritical},
  \Cref{prop:extensions.spectral.properties}
  \ref{item:extensions.spectral.properties.2} implies that the
  spectrum of $\ccD_{\eta,\tau,\lambda}$ in $(-\abs{m}, \abs{m})$ is
  discrete and finite. 

  We show  \ref{item:spectrum.noncritical.3} only in the case that
  $d(x)\neq 0$ for all $x \in \Sigma$, the case $d= 0$ being
  similar. 
  By \Cref{thm:boundary.triple.abstract}
  \ref{item:abstract.spectrum} combined with \eqref{eq:to.be.used}, $z \in \rho(\ccD_0)$ is an eigenvalue of
  $\ccD_{\eta,\tau,\lambda}$ if and only if there exists $\psi \in \dom
  \Theta =  H^1(\Sigma;\C^2)$ such that
  \begin{equation*}
    -\Lambda V (\eta \bI_2 + \tau \sigma_3 +
    \lambda(\sigma\cdot\mathbf{t}))^{-1}
    (\bI_2 + (\eta \bI_2 + \tau \sigma_3 +
    \lambda(\sigma\cdot\mathbf{t}))\mathcal{C}_z)
    V^* \Lambda \psi = 0,
  \end{equation*}
  \ie if and only if $\varphi = V^* \Lambda \psi \in
  H^{\frac12}(\Sigma;\C^2)$ satisfies
  \begin{equation*}
    (\bI_2 + (\eta \bI_2 + \tau \sigma_3 +
    \lambda(\sigma\cdot\mathbf{t}))\mathcal{C}_z)
    \varphi = 0.\qedhere
  \end{equation*}
\end{proof}

\section{The purely magnetic critical interaction}
\label{sec:critical}
In this section we give the proof of \Cref{thm:critical}: we consider
the case $\lambda = 2$ only, since the case $\lambda= -2$ can be treated analogously.
Recall that by Theorem~\ref{thm:confinement.introduction} the operator $\ccD_{0,0,2}$ can be decomposed into the
orthogonal sum
\begin{equation*}
	\ccD_{0,0,2} = \ccD_{0,0,2}^+ \oplus \ccD_{0,0,2}^- =: \ccD^+ \oplus \ccD^-,
\end{equation*}
with
\begin{equation*}
\begin{split}
	&\dom \ccD^\pm := 
	\big\{ f_\pm \in H(\sigma,\Omega_\pm) \mid
	\left[\pm i (\sigma \cdot \mathbf{n}) +
	 (\sigma \cdot \mathbf{t})) \right] 
	\mathcal{T}_\pm^D f_{\pm} = 0
	\big\}, \\
	& \ccD^\pm f_\pm := \ccD_0 f_\pm,
	\quad \text{ for all }f_\pm \in \dom \ccD^\pm.
\end{split}
\end{equation*}
Using that $n_1 = t_2$, $n_2 = -t_1$ we find
\begin{equation*}
	i(\sigma\cdot\mathbf{n}) + (\sigma\cdot \mathbf{t}) 
	= 
	\begin{pmatrix} 0 & 0 \\ 2T & 0 \end{pmatrix},
        \quad
        -i(\sigma\cdot\mathbf{n}) + (\sigma\cdot \mathbf{t}) 
	=  \begin{pmatrix} 0 & 2\overline{T}\\ 0 & 0 \end{pmatrix}.
\end{equation*}
Hence, $\ccD^\pm$ have the following representations
\[
\begin{aligned}
	\ccD^+ f = \ccD_0f,&\qquad \dom \ccD^+ = \{f = (f_1,f_2)^\top
        \in H(\sigma, \Omega_+)\mid
	\mathcal{T}_+^D f_{1} = 0\},\\
	\ccD^- f = \ccD_0f,&\qquad \dom \ccD^- = \{f = (f_1,f_2)^\top
        \in H(\sigma, \Omega_-)\mid
        \mathcal{T}_-^D f_{2} = 0\}.
\end{aligned}	
\]
Using the Cauchy-Riemann differential expressions
\[
	\partial_z := \frac12(\partial_1 -i \partial_2)\qquad\text{and}\qquad
	\partial_{\bar z} :=  \frac12(\partial_1 +i \partial_2),
\]
we can represent $\ccD^\pm$ as follows
\[
\begin{aligned}
	\ccD^\pm f & = 
	\begin{pmatrix} m & -2i\partial_z\\
	-2i\partial_{\bar z} & -m\end{pmatrix}
	\begin{pmatrix}f_1\\ f_2\end{pmatrix},\\
	\dom \ccD^+ &= \{f = (f_1,f_2)^\top\mid
	f_1,f_2,\partial_z f_2, \partial_{\bar z} f_1\in L^2(\Omega_+), \mathcal{T}_+^D f_{1} = 0\},\\
	%
	\dom \ccD^- &= \{f = (f_1,f_2)^\top \mid f_1,f_2,\partial_z
        f_2, \partial_{\bar z} f_1\in L^2(\Omega_-),
        \mathcal{T}_-^D f_{2} = 0\}.
\end{aligned}
\]
In view of~\cite[Lemma 18]{antunes2020variational} (see also~\cite[Lemma 3.1]{behrndt2019two}) the domains of $\ccD^\pm$ can alternatively be given by
\[
\begin{aligned}
	\dom \ccD^+ & = 
	\{f = (f_1,f_2)^\top\mid
	f_2, \partial_{z} f_2\in L^2(\Omega_+), f_1\in H^1_0(\Omega_+)\},\\
	\dom \ccD^-& = \{f = (f_1,f_2)^\top\mid f_1,\partial_{\bar z} f_1\in L^2(\Omega_-), f_2\in H^1_0(\Omega_-)\}.
\end{aligned}
\]
The operators $\ccD^\pm$ can be viewed as bounded symmetric perturbations of the respective massless Dirac operators. Since by~\cite[Proposition 1]{schmidt1995remark}
the unperturbed massless Dirac operators are self-adjoint, we conclude that $\ccD^\pm$ are self-adjoint as well.

Next, we will show the symmetry of the spectrum of $\ccD^\pm$. For $\nu\in\C$ such that $\nu^2-m^2\ne 0$ we introduce the matrix
\[
	T_\nu = \begin{pmatrix} \sqrt{\frac{m+\nu}{m-\nu}} & 0 \\
	0& \sqrt{\frac{m-\nu}{m+\nu}}\end{pmatrix}.
\]
Clearly, $T_\nu$ is invertible and $T_\nu^{-1} = T_{-\nu}$. Moreover, for any $\nu\in\mathbb{C}\setminus \{-m,m\}$ we have $T_\nu(\dom \ccD^\pm) = \dom \ccD^\pm$ and
\begin{equation}\label{eq:commutation}
	(\ccD^\pm +\nu)T_{-\nu} = T_\nu(\ccD^\pm - \nu).
\end{equation}
Let $\nu\in\sigma(\ccD^\pm)\setminus\{-m,m\}$, then there exists a sequence $(\psi_n)_n$ in $\dom \ccD^\pm$
such that
\[
	\lim_{n\rightarrow\infty} \frac{\|(\ccD^\pm -\nu)\psi_n\|}{\|\psi_n\|} =0.
\]
Let $\phi_n := T_\nu^{-1}\psi_n = T_{-\nu}\psi_n$. Then we get 
\[
\begin{aligned}
	\frac{\|(\ccD^\pm +\nu)\phi_n\|}{\|\phi_n\|} & = \frac{\|(\ccD^\pm +\nu)T_{-\nu}\psi_n\|}{\|T_{-\nu}\psi_n\|}=
	\frac{\|T_{\nu}(\ccD^\pm -\nu)\psi_n\|}{\|T_{\nu}^{-1}\psi_n\|}\\
	& \le\|T_\nu\|^2 \frac{\|(\ccD^\pm -\nu)\psi_n\|}{\|\psi_n\|}\rightarrow 0,\qquad\text{as}~ 
	n\rightarrow\infty.
\end{aligned}
\]
Hence, we conclude that $-\nu\in\sigma(\ccD^\pm)$. Moreover, if $\nu\neq\pm m$ is an eigenvalue of $\ccD^\pm$, then in view of identity~\eqref{eq:commutation} $-\nu$ is also an eigenvalue of $\ccD^\pm$.

Now we perform the spectral analysis of $\ccD^+$. 
First of all, we notice the inclusion 
\[
	\ker(\ccD^+ + m) \supset\left\{\begin{pmatrix}0 \\
            f_2\end{pmatrix} \, \middle\vert \, f_2 \in L^2(\Omega_+),
	\partial_{z} f_2 = 0\right\}. 
\]
Indeed,
\[
	\ccD^+\begin{pmatrix} 0 \\ f_2 \end{pmatrix} = \begin{pmatrix} -2i \partial_{z} f_2\\ -mf_2\end{pmatrix} = -m\begin{pmatrix} 0\\ f_2 \end{pmatrix}. 
\]
Since the space of square-integrable anti-holomorphic functions on
$\Omega_+$ is infinite-dimensional, $-m$ is an eigenvalue of infinite
multiplicity in the spectrum of $\ccD^+$. In particular, $\dom \ccD^+ \not\subset
H^s(\Omega_+;\mathbb{C}^2)$ for any $s > 0$, as otherwise the spectrum of $\ccD^+$  would be purely discrete, due to the compactness of embedding of the Sobolev spaces $H^s(\Omega_+;\C^2)$, $s > 0$, into $L^2(\Omega_+;\C^2)$. Thus, $\dom \ccD_{0,0,2} \not\subset H^s(\mathbb{R}^2\setminus\Sigma;\mathbb{C}^2)$ for any $s > 0$. 

Consider the auxiliary operators 
\begin{equation}\label{eq:Apm}
\begin{aligned}
A_+\psi & = -2i\partial_{\bar z} \psi, \qquad \dom A_+ = H^1_0(\Omega_+),\\
A_-\psi & = -2i\partial_{z} \psi, \qquad \dom A_- = H^1_0(\Omega_-).\\
\end{aligned}
\end{equation}
The adjoints of $A_\pm$ are characterised in the spirit of~\cite[Proposition 1]{schmidt1995remark} as
\[
\begin{aligned}
	A_+^*\psi & = -2i\partial_{ z}\psi,\qquad \dom A_+^* = \big\{\psi\in L^2(\Omega_+)\mid \partial_{z}\psi\in L^2(\Omega_+)\big\},\\
	A_-^*\psi & = -2i\partial_{\bar z}\psi,\qquad \dom A_-^* = \big\{\psi\in L^2(\Omega_-)\mid \partial_{\bar z}\psi\in L^2(\Omega_-)\big\}.\\
\end{aligned}
\]
The quadratic form for $(\ccD^+)^2$ is given by
\[
	\mathfrak{h}^+[f] := \|\ccD^+f\|^2_{L^2(\Omega_+;\mathbb{C}^2)},\qquad \dom\mathfrak{h}^+ := \dom \ccD^+.
\] 
Next, we compute
\[
\begin{aligned}
	\mathfrak{h}^+[f]  = & \, \|\ccD^+f\|^2_{L^2(\Omega_+;\mathbb{C}^2)} =
	\|mf_1 - 2i\partial_zf_2\|^2_{L^2(\Omega_+)}
	+
	\|-mf_2 - 2i\partial_{\bar z}f_1\|^2_{L^2(\Omega_+)}\\
	 = & \,
	4\|\partial_zf_2\|^2_{L^2(\Omega_+)}
	+
	4\|\partial_{\bar z}f_1\|^2_{L^2(\Omega_+)} \\
	&
        +
	4m\Re\left[ (f_2,i\partial_{\bar z} f_1)_{L^2(\Omega_+)} -
          (f_1,i\partial_z f_2)_{L^2(\Omega_+)}   \right]
        \\
        & + 
	m^2\|f_1\|_{L^2(\Omega_+)}^2 + m^2\|f_2\|_{L^2(\Omega_+)}^2.
\end{aligned}	
\]
Integrating by parts, we find with the aid of $\dom\mathfrak{h}^+ = \dom \ccD^+$ that
\begin{equation*}
  \begin{split}
	 \Re &\left[ (f_2,i\partial_{\bar z} f_1)_{L^2(\Omega_+)}  -
          (f_1,i\partial_z f_2)_{L^2(\Omega_+)} \right]
        \\
	 & \quad =
	\Re\left[(i \partial_z f_2, f_1)_{L^2(\Omega_+)} -
          (f_1,i\partial_z f_2)_{L^2(\Omega_+)}\right]= 0. 
      \end{split}
    \end{equation*}
Thus, the expression for $\mathfrak{h}^+$ simplifies as
\[
\begin{aligned}
	\mathfrak{h}^+[f] & = 4\|\partial_zf_2\|^2_{L^2(\Omega_+)}
	+
	4\|\partial_{\bar z}f_1\|^2_{L^2(\Omega_+)} + 
	m^2\|f_1\|_{L^2(\Omega_+)}^2 + m^2\|f_2\|_{L^2(\Omega_+)}^2\\
	& = \|A_+^*f_2\|^2_{L^2(\Omega_+)}
	+
	\|A_+f_1\|^2_{L^2(\Omega_+)} + 
	m^2\|f_1\|_{L^2(\Omega_+)}^2 + m^2\|f_2\|_{L^2(\Omega_+)}^2.
\end{aligned}	
\]
The domain of $\mathfrak{h}^+$ can be written as
\[
	\dom\mathfrak{h}^+ =\big\{ f = (f_1,f_2)^\top\mid f_1\in \dom A_+, f_2\in\dom A_+^*\big\}
	=	\dom A_+\oplus \dom A_+^*.
\]
Therefore, we end up with the orthogonal decomposition
\[
	(\ccD^+)^2 = (A_+^*A_+ +m^2)\oplus (A_+A_+^* + m^2),
\]
from which we deduce, using ~\cite[Propositions 2 and 3]{schmidt1995remark}, that 
\[
	\sigma((\ccD^+)^2)\setminus\{m^2\} =\big\{m^2+\mu\mid \mu\in\sigma(-\Delta_{\rm D}^{\Omega_+})\big\},
\]
where $-\Delta_{\rm D}^{\Omega_+}$ is the Dirichlet Laplacian on $\Omega_+$.  
Using the symmetry of the spectrum shown above we obtain that
\[
	\sigma(\ccD^+)\setminus \{-|m|,|m|\} = \big\{\pm\sqrt{m^2+\mu}\mid \mu\in\sigma(-\Delta_{\rm D}^{\Omega_+})\big\}.
\]
Hence,  \ref{rm_(ii)} of \Cref{thm:critical} follows. Moreover, we observe that $\ccD^+\upharpoonright H^1(\Omega_+;\mathbb{C}^2)$ is essentially self-adjoint provided that $H^1_0(\Omega_+)\oplus H^1(\Omega_+)$
is a core for $\ccD^+$.  The latter follows from the density
of $H^1(\Omega_+)$ in $\dom A_+^*$; cf. \cite[Lemma 14]{antunes2020variational}.
 
Now we perform the spectral analysis of $\ccD^-$. As in the analysis of $\ccD^+$  we notice the inclusion 
\[
	\ker(\ccD^- - m) \supset\left\{\begin{pmatrix}f_1 \\
	0\end{pmatrix} \, \middle\vert \, f_1 \in L^2(\Omega_-),
	\partial_{\overline{z}} f_1 = 0\right\}. 
\]
Indeed,
\[
\ccD^-\begin{pmatrix} f_1 \\ 0 \end{pmatrix} = \begin{pmatrix} mf_1\\-2i \partial_{\overline{z}} f_1 \end{pmatrix} = m\begin{pmatrix} f_1\\ 0 \end{pmatrix}. 
\]
We observe that the space of square-integrable holomorphic functions on
$\Omega_-$ is also infinite-dimensional.
Indeed, for an arbitrary $z_0 \in \Omega_+$  the family of linear independent functions
$\{(z-z_0)^{-k}\}_{k\ge 2}$ is square-integrable and holomorphic in $\Omega_-$
Hence, $m$ is an eigenvalue of infinite
multiplicity in the spectrum of $\ccD^-$ and thus combining with the fact that $-m$
is an eigenvalue of infinite multiplicity in the spectrum of $\ccD_+$ shown above the claim of (ii) of Theorem \ref{thm:critical}
follows.
Next we consider the quadratic form
\[
	\mathfrak{h}^-[f] := \|\ccD^-f\|^2_{L^2(\Omega_-;\mathbb{C}^2)}\qquad \dom\mathfrak{h}^- := \dom \ccD^-.
\] 
 for $(\ccD^-)^2$. Repeating the same type of computation as we did for $\ccD^+$ we get
\[
\begin{aligned}
	\mathfrak{h}^-[f] & := \|\ccD^-f\|^2_{L^2(\Omega_-;\mathbb{C}^2)} =
	\|mf_1 - 2i\partial_zf_2\|^2_{L^2(\Omega_-)}
	+
	\|-mf_2 - 2i\partial_{\bar z}f_1\|^2_{L^2(\Omega_-)}\\
	& =
	4\|\partial_zf_2\|^2_{L^2(\Omega_-)}
	+
	4\|\partial_{\bar z}f_1\|^2_{L^2(\Omega_-)} + 
	m^2\|f_1\|_{L^2(\Omega_-)}^2 + m^2\|f_2\|_{L^2(\Omega_-)}^2\\
	& =
	\|A_-f_2\|^2_{L^2(\Omega_-)}
	+
	\|A_-^*f_1\|^2_{L^2(\Omega_-)} + 
	m^2\|f_1\|_{L^2(\Omega_-)}^2 + m^2\|f_2\|_{L^2(\Omega_-)}^2.
\end{aligned}	
\]
The domain of $\mathfrak{h}^-$ can be written as
\[
\dom\mathfrak{h}^- =\dom A_-^*\oplus \dom A_-.
\]
Hence, we have the orthogonal decomposition
\[
	(\ccD^-)^2 = (A_-A^*_- +m^2)\oplus (A^*_-A_- + m^2),
\]
which implies in view of~\cite[Propositions 2 and 3]{schmidt1995remark} that 
\[
\sigma((\ccD^-)^2)\setminus \{m^2\} =\big\{m^2+\mu\mid \mu\in\sigma(-\Delta_{\rm D}^{\Omega_-})\setminus \{0\}\big\},
\]
where $-\Delta_{\rm D}^{\Omega_-}$ is the Dirichlet Laplacian on $\Omega_-$.  
Taking that $\sigma(-\Delta_{\rm D}^{\Omega_-}) = [0,\infty)$ into account we get
$\sigma((\ccD^-)^2) = [m^2,+\infty)$. In view of the symmetry of the spectrum of $\ccD^-$ shown above we necessarily get that $\sigma(\ccD^-) = (-\infty,-|m|]\cup [|m|,+\infty)$. Hence, \ref{rm_(iii)} of \Cref{thm:critical} is shown. Essential self-adjointness of $\ccD^-\upharpoonright H^1(\Omega_-;\mathbb{C}^2)$ follows analogously to that of $\ccD^+\upharpoonright H^1(\Omega_+;\mathbb{C}^2)$.

Essential self-adjointness of $\ccD_{0,0,2}\upharpoonright H^1(\mathbb{R}^2\setminus\Sigma;\mathbb{C}^2)$ follows from essential self-adjointness of $\ccD^\pm \upharpoonright H^1(\Omega_\pm;\mathbb{C}^2)$.
Thus the proof is concluded.

\begin{remark}
	The above spectral analysis of $\ccD^\pm$ is reminiscent of the spectral analysis of three-dimensional Dirac operators with zig-zag boundary conditions on general open sets performed in~\cite{H20}, which has appeared while the present paper was under preparation.
\end{remark}

\section{Approximation of $\delta$--shell interactions by regular
  potentials}
\label{sec:approximation}
In this section we 
prove Theorem~\ref{thm:approximation} on approximation of the Dirac operator with $\delta$-shell interaction by a sequence of Dirac operators with regular scaled potentials. 
\begin{proof}[Proof of Theorem~\ref{thm:approximation}]
  For all $0<\epsilon<\beta$ we define the self-adjoint operators
  $\ccE_{\eta,\tau,\lambda;\epsilon}$  according to
  \eqref{eq:defn.Eepsilon}
and we define the operators
$\ccD_{\hat{\eta,}\hat{\tau},\hat{\lambda}}$ according to
\eqref{eq:defn.D.shell.domain}, \eqref{eq:defn.D.shell}.
Since $\mathfrak{C}(\hat{\eta},\hat{\tau},\hat{\lambda}) \neq 0$ everywhere on $\Sigma$,
\Cref{thm:D.shell.introduction} tells us that these operators are
self-adjoint and $\dom \ccD_{\hat{\eta},\hat{\tau},\hat{\lambda}} \subset H^1(\R^2 \setminus
  \Sigma;\C^2)$.
Thanks to \cite[Theorem VIII.26]{reedsimon1}, since the limiting operators and the limit operator are self-adjoint,
 the family
  $\{\ccE_{\eta,\tau,\lambda;\epsilon}\}_{\epsilon\in(0,\beta)}$ converges in the
  strong resolvent sense to $\ccD_{\hat{\eta,}\hat{\tau},\hat{\lambda}}$ as $\epsilon\to 0$ if and only if it converges in the strong
  graph limit sense. The latter means that,   for all $\psi \in \dom
  \ccD_{\hat{\eta},\hat{\tau},\hat{\lambda}}$, there exists a family of vectors
  $\{\psi_{\epsilon}\}_{\epsilon\in(0,\beta)} \subset
   \dom \ccE_{\eta,\tau,\lambda;\epsilon} =
   H^1(\R^2;\C^2)$ such that
  \begin{equation}\label{eq:strong.convergence}
    \lim_{\epsilon\to 0}\psi_{\epsilon} = \psi
    \quad \text{ and } \quad
    \lim_{\epsilon\to 0}\ccE_{\eta,\tau,\lambda;\epsilon} \psi_{\epsilon}
    =
    \ccD_{\hat{\eta,}\hat{\tau},\hat{\lambda}} \psi
    \quad
    \text{ in }L^2(\R^2;\C^2).
  \end{equation}  
Without loss of generality we can assume $m =0$, because, by its very definition, the strong
graph convergence \eqref{eq:strong.convergence} is stable with respect to bounded symmetric perturbations.

Let $\psi\equiv\psi_+\oplus\psi_- \in \dom \ccD_{\hat{\eta,}\hat{\tau},\hat{\lambda}}$.
From \eqref{eq:statement.going.d>0} -- 
      \eqref{eq:statement.going.d<0},  we observe that $\hat{d}:=\hat{\eta}^2 - \hat{\tau}^2 -
  \hat{\lambda}^2> -4$. Therefore, by Lemma  \ref{lem:confinement}~(i),
  \begin{equation}\label{eq:equality_traces_1}
          \mathcal{T}_+^D \psi_+ =
          R_{\hat{\eta,}\hat{\tau},\hat{\lambda}} \mathcal{T}_-^D
          \psi_-,
 \end{equation}
 where 
 \begin{equation}
     R_{\hat{\eta,}\hat{\tau},\hat{\lambda}}(x_\Sigma)
      =
      \frac{4}{4+\hat{d}}\left(\frac{4-\hat{d}}{4}\,\bI_2+i\hat{\eta}(\sigma\cdot\mathbf{n})+\hat{\tau}
        (\sigma\cdot\mathbf{t}) -\hat{\lambda}\sigma_3 \right)(x_\Sigma).
 \end{equation}
Clearly, $R_{\hat{\eta,}\hat{\tau},\hat{\lambda}} \in C^{\infty}(\Sigma;\C^{2\times 2})$. Moreover, by Proposition~\ref{prop:trace.H12} combined with the fact that $\dom \ccD_{\hat{\eta,}\hat{\tau},\hat{\lambda}}\subset H^1(\R^2\setminus\Sigma;\C^2)$, see Theorem \ref{thm:self-adjointess.noncritical}, $\mathcal{T}_\pm^D \psi_\pm \in
H^{\frac12}(\Sigma;\C^2)$.

Recall that $\ccE_{\eta,\tau,\lambda;\epsilon}=\ccD_0+\bV_{\eta,\tau,\lambda;\epsilon}$, where, for all $x$ in $\Sigma_\epsilon$, \ie in the $\epsilon$--tubular neighborhood of $\Sigma$,  
\begin{equation*}
\bV_{\eta,\tau,\lambda;\epsilon}=  B_{\eta,\tau,\lambda}(x_\Sigma) h_\epsilon(p),
\end{equation*}
where $B_{\eta,\tau,\lambda}$ was introduced in  \eqref{eq:defb.B}, and $\bV_{\eta,\tau,\lambda;\epsilon}=0$ everywhere else, see \eqref{eq:defn.bV}. According~to \Cref{lem:exp.isnB}, 
  \begin{equation}\label{eq:expA=R}
    \exp[i(\sigma\cdot \mathbf{n}(x_\Sigma))
    B_{\eta,\tau,\lambda}(x_\Sigma)]
    =
    R_{\hat{\eta,}\hat{\tau},\hat{\lambda}}(x_\Sigma),
    \quad \text{ for all }x_\Sigma \in \Sigma,
  \end{equation}
Since, by definition, $\int_{-\epsilon}^{\epsilon} h_{\epsilon}(t)\,dt = 1$, we can rewrite \eqref{eq:equality_traces_1} as 
\begin{equation}\label{eq:equality.traces}
  \begin{split}
  & \exp\left[- i\left(\int_{-\epsilon}^0 h_\epsilon(t)\,dt\right)
    (\sigma\cdot \mathbf{n}) B_{\eta,\tau,\lambda} 
  \right]\mathcal{T}_+^D \psi_+
  \\ &\qquad  =
  \exp\left[i \left( \int_0^{\epsilon} h_\epsilon(t)\,dt \right) (\sigma\cdot \mathbf{n}) B_{\eta,\tau,\lambda}
  \right] \mathcal{T}_-^D \psi_-.
\end{split}
\end{equation}

Let us now construct the family $\{\psi_\epsilon\}_{\epsilon\in(0,\beta)}$.
For all $\epsilon\in(0,\beta)$, we put
\begin{equation*}
    H_{\epsilon}: \R\setminus\{0\} \to \R, 
  \quad H_{\epsilon}(p):=
  \begin{cases}
    \int_{p}^{\epsilon} h_{\epsilon}(t)\,dt \quad &0 <  p <\epsilon ,\\
    -\int_{-\epsilon}^p h_{\epsilon}(t)\,dt \quad &-\epsilon < p < 0,\\
    0 \quad &\abs{p}\geq \epsilon.
  \end{cases}
\end{equation*}
Note that $\supp\,H_{\epsilon} \subset (-\epsilon,\epsilon)$ and
$H_\epsilon \in L^\infty(\R)$. Since $\norm{H_\epsilon}_{L^{\infty}(\R)}\leq \norm{h}_{L^1(\R)}$, $\{H_\epsilon\}_{\epsilon}$ is bounded uniformly in $\epsilon$.
For all $\epsilon\in(0,\beta)$, the restrictions of $H_\epsilon$ to
$\R_\pm$  are uniformly continuous, so  finite limits at $p=0$ exist,
and differentiable a.e. with derivative being bounded, since
$h_\epsilon\in L^\infty(\R;\R)$. Furthermore, $H_\epsilon$ has a jump at the origin of
size $\int_{-\epsilon}^\epsilon h_\epsilon(t)\,dt =1$.
Next, we set
\begin{equation}\label{eq:defn.bUepsilon}
  \begin{split}
    &\bU_{\epsilon} : \R^2 \setminus \Sigma \to \C^{2\times 2},
    \\ 
    &  \bU_{\epsilon}(x):=
  \begin{cases}
    \exp[i(\sigma\cdot\mathbf{n})B_{\eta,\tau,\lambda}(\mathscr{P}_\Sigma(x))
    \, H_{\epsilon}(\mathscr{P}_\perp(x))]
    \quad & x \in \Sigma_\epsilon\setminus\Sigma,\\
    \bI_2 \quad & x \in \R^2\setminus \Sigma_\epsilon,
  \end{cases}
\end{split}
\end{equation}
where the mappings $\mathscr{P}_\Sigma$ and $\mathscr{P}_\perp$ are defined as in \eqref{eq:PSigma} and~\eqref{eq:Pperp}, respectively.
The matrix functions $\bU_{\epsilon}$ are bounded, uniformly in $\epsilon$, and  uniformly continuous in $\Omega_\pm$, with a jump discontinuity
across $\Sigma$: for all $x_\Sigma \in\Sigma$ we have
\begin{equation}\label{eq:bUepsilon+}
  \bU_\epsilon(x_\Sigma^+) :=\lim_{\substack{y \to x_\Sigma \\ y \in \Omega_+}} \bU_\epsilon(y)
  =
  \exp\left[-i\left(\int_{-\epsilon}^0 h_\epsilon(t)\,dt\right)(\sigma\cdot\mathbf{n}(x_\Sigma))B_{\eta,\tau,\lambda}(x_\Sigma)\right]
\end{equation}
and 
\begin{equation}\label{eq:bUepsilon-}
  \bU_\epsilon(x_\Sigma^-) :=\lim_{\substack{y \to x_\Sigma \\ y \in \Omega_-}} \bU_\epsilon(y)
  =
  \exp\left[ i\left(\int_0^{\epsilon} h_\epsilon(t)\,dt\right)(\sigma\cdot\mathbf{n}(x_\Sigma))B_{\eta,\tau,\lambda}(x_\Sigma)
  \right].
\end{equation}
Finally, we put 
\begin{equation}\label{eq:defn.psiepsilon}
  \psi_\epsilon = \psi_{\epsilon,+}\oplus \psi_{\epsilon,-} :=
  \bU_\epsilon \psi \in L^2(\R^2;\C^2).
\end{equation}

It is immediate to see,  by the dominated convergence theorem, that 
\begin{equation}\label{eq:conv.L2}
  \psi_\epsilon \xrightarrow[\epsilon \to 0]{} \psi \quad \text{ in }L^2(\R^2;\C^2)
\end{equation}
since $\psi_\epsilon - \psi = (\bU_\epsilon -\bI) \psi$,
$\bU_{\epsilon}\in L^{\infty}(\R^2;\C^{2\times 2})$
with a uniform bound
in $\epsilon\in(0,\beta)$, $\supp (\bU_\epsilon -
\bI) \subset \Sigma_\epsilon$ and $\abs{\Sigma_\epsilon}\to 0$ as
$\epsilon \to 0$. 

We show now that $\psi_\epsilon \in \dom
\ccE_{\eta,\tau,\lambda;\epsilon} = H^1 (\R^2 ;\C^2)$ for all
$\epsilon\in(0,\beta)$. To do so,  we verify that
$\psi_{\epsilon, \pm} \in H^1(\Omega_\pm; \C^2)$ and that
$\mathcal{T}_+^D \psi_{\epsilon,+} = \mathcal{T}_-^D \psi_{\epsilon,-}
\in H^{\frac{1}{2}}(\Sigma;\C^2)$.
Let $\gamma : \R \big/ \ell \Z \to \Sigma
\subset \R^2$ be, as always,  a smooth arc-length parametrization of $\Sigma$ with positive
orientation and 
\begin{equation*}
     A \in C^{\infty}(\R/\ell\Z ; \C^{2\times 2}), \quad
     A(s) := i(\sigma\cdot \mathbf{n}(\gamma(s)))
     B_{\eta,\tau,\lambda}(\gamma(s)).
   \end{equation*}
Thus, we may write \eqref{eq:defn.bUepsilon} as
\begin{equation}\label{eq:defn.bUepsilon.2}
      \bU_{\epsilon}(x)=
  \begin{cases}
    \exp[A(\mathscr{P}_\gamma(x))H_{\epsilon}(\mathscr{P}_\perp(x))]
    \quad & x \in \Sigma_\epsilon\setminus\Sigma,\\
    \bI_2 \quad & x \in \R^2\setminus \Sigma_\epsilon,
  \end{cases}
\end{equation}
where $\mathscr{P}_\gamma$ is defined as in~\eqref{eq:Pgamma}.
For $j=1,2$, $\supp\, \partial_j \bU_\epsilon \subset\Sigma_\epsilon$ and,
thanks to the Wilcox formula, \cf \cite[eq.~(4.1)]{wilcox1967exponential}, for $x\in
\Sigma_\epsilon\setminus\Sigma$ we have
\begin{equation*}  
  \begin{split}
  \partial_j \bU_{\epsilon}(x) =
  \int_0^1 & \Big[
    e^{z A(\mathscr{P}_\gamma(x))H_{\epsilon}(\mathscr{P}_\perp(x))}
    \partial_j
    \big[A(\mathscr{P}_\gamma(x))H_{\epsilon}(\mathscr{P}_\perp(x))
    \big] \\
    & \quad
    e^{(1-z) A(\mathscr{P}_\gamma(x))H_{\epsilon}(\mathscr{P}_\perp(x))}
    \Big]\,dz.
  \end{split}
\end{equation*}
Recall that we have set $s= \mathscr{P}_\gamma(x)$
and $p=\mathscr{P}_\perp(x)$. Using \eqref{eq:L_grad}, we obtain
\begin{equation*}  
    \partial_j
    \big[A(\mathscr{P}_\gamma(x))H_{\epsilon}(\mathscr{P}_\perp(x))
    \big] 
     = 
    \partial_s A(s)
    \frac{(\mathbf{t}_\gamma(s))_j}{1+p\kappa_\gamma(s)}
    H_\epsilon(p)
    -
    A(s) h_\epsilon(p)(\mathbf{n}_\gamma(s))_j.
\end{equation*}
Therefore, we arrive at
\begin{equation}\label{eq:derivative.Uepsilon}
  \begin{split}
    \partial_j \bU_\epsilon(x)
    = &  -
    A(s)h_\epsilon(p)
    (\mathbf{n}_\gamma(s))_j \bU_\epsilon(x)
    \\ & + H_\epsilon(p) \int_0^1
    e^{z A(s)H_{\epsilon}(p)}
    \partial_s A(s)
    \frac{(\mathbf{t}_\gamma(s))_j}{1+p\kappa_\gamma(s)}
    \cdot e^{(1-z) A(s)H_{\epsilon}(p)}\,dz
    \\ = &
    - i(\sigma\cdot\mathbf{n}(x)) \bV_{\eta,\tau,\lambda;\epsilon}(x) (\mathbf{n}_\gamma(s))_j \bU_\epsilon(x)
    + R_{j;\epsilon}(x),
  \end{split}
\end{equation}
where 
\begin{equation*}
  R_{j;\epsilon}(x):=
  H_\epsilon(p) \int_0^1
  e^{z A(s)H_{\epsilon}(p)}
  \partial_s A(s)
  \frac{(\mathbf{t}_\gamma(s))_j}{1+p\kappa_\gamma(s)}
  \cdot e^{(1-z) A(s)H_{\epsilon}(p)}\,dz.
\end{equation*}
The matrix-valued functions  $R_{j;\epsilon}$ are bounded, uniformly in $\epsilon\in(0,\beta)$, and $\supp R_{j;\epsilon} \subset \Sigma_\epsilon$.
We observe that
\begin{equation} \label{eq:U_bounds}
\bU_\epsilon, \partial_1 \bU_\epsilon, \partial_2 \bU_\epsilon \in
 L^\infty(\Omega_\pm ;\C^{2\times 2}).
\end{equation}
Since $\psi_\pm \in H^1(\Omega_\pm;\C^2)$,
we conclude that $\psi_{\epsilon,\pm}=\bU_\epsilon \psi_\pm \in H^1(\Omega_\pm;\C^2)$.

Thanks to \Cref{prop:trace.H12},
$\mathcal{T}_+^D \psi_{\epsilon,\pm}\in H^{\frac{1}{2}}(\Sigma;\C^2)$
and, thanks to \cite[Chapter 4]{evans2015measure}, for a.e.~$x_\Sigma\in \Sigma$,
\begin{equation*}
  \begin{split}
    \mathcal{T}_\pm^D \psi_{\epsilon,\pm}(x_{\Sigma})
    & =
  \lim_{r\to 0} \frac{1}{|B_r(x_{\Sigma})|}
  \int_{\Omega_\pm\cap B_r(x_{\Sigma})}  \psi_{\epsilon}(y)\,dy\\
    & = \lim_{r\to 0} \frac{1}{|B_r(x_{\Sigma})|}
    \int_{\Omega_\pm\cap B_r(x_{\Sigma})}  \bU_\epsilon(y)\psi(y)\,dy;
  \end{split}
\end{equation*}
similarly, we have
\begin{equation*}
 \bU_\epsilon(x_{\Sigma}^\pm)  \mathcal{T}_\pm^D \psi_{\pm}(x_{\Sigma}) =
  \lim_{r\to 0} \frac{1}{|B_r(x_{\Sigma})|}
  \int_{\Omega_\pm\cap B_r(x_{\Sigma})} \bU_\epsilon(x_{\Sigma}^\pm)  \psi(y)\,dy.
\end{equation*}
Since $\bU_\epsilon$ is continuous on $\overline{\Omega_+}$ and $\overline{\Omega_-}$, respectively,  we get 
\begin{equation*}
 \mathcal{T}_\pm^D \psi_{\epsilon,\pm}(x_{\Sigma})= \bU_\epsilon(x_{\Sigma}^\pm)   \mathcal{T}_\pm^D \psi_{\pm}(x_{\Sigma}).
\end{equation*}
Taking \eqref{eq:equality.traces}, \eqref{eq:bUepsilon+}, and
\eqref{eq:bUepsilon-} into account, this yields
$  \mathcal{T}_+^D  \psi_{\epsilon,+}  =   \mathcal{T}_-^D
\psi_{\epsilon,-} \in H^{\frac{1}{2}}(\Sigma;\C^2)$,  and so we conclude
that $\psi_\epsilon \in H^1(\R^2;\C^2)$ for all $\epsilon\in(0,\beta)$.

To finish the proof, it remains to show that $\lim_{\epsilon\to 0}\ccE_{\eta,\tau,\lambda;\epsilon}
\psi_{\epsilon}=\ccD_{\eta,\tau,\lambda}\psi$. We have
\begin{equation}\label{eq:approx.gathering.1}
  \begin{split}
  \ccE_{\eta,\tau,\lambda;\epsilon} \psi_{\epsilon} -
  \ccD_{\eta,\tau,\lambda}\psi 
  = &
  -i \sigma\cdot \nabla (\bU_\epsilon \psi)
  + \bV_{\eta,\tau,\lambda;\epsilon}  \psi_\epsilon
  +   i \sigma\cdot \nabla \psi.
  \\ = &
  -i \sum_{j=1}^2 \sigma_j 
  [(\partial_j \bU_\epsilon) \psi
  + (\bU_\epsilon - \bI_2)\partial_j \psi ]
  + \bV_{\eta,\tau,\lambda;\epsilon}  \psi_\epsilon.
\end{split}
\end{equation}
Applying \eqref{eq:derivative.Uepsilon} together with \eqref{eq:Pauli.squares}, we get 
\begin{equation}\label{eq:approx.gathering.2}
  \begin{split}
  -i \sum_{j=1}^2 \sigma_j 
  (\partial_j \bU_\epsilon) \psi
  = &
  -i \sum_{j=1}^2 \sigma_j 
  [ -i (\sigma\cdot\mathbf{n}) \bV_{\eta,\tau,\lambda;\epsilon}
   \,n_j\, \bU_\epsilon   \psi
   + R_{j;\epsilon}\psi ]
   \\ = &
   - (\sigma\cdot\mathbf{n})
   (\sigma\cdot\mathbf{n})
   \bV_{\eta,\tau,\lambda;\epsilon}
   \bU_\epsilon   \psi
   - i \sum_{j=1}^2 \sigma_j
    R_{j;\epsilon}\psi 
   \\ = & 
   - \bV_{\eta,\tau,\lambda;\epsilon}\psi_\epsilon
   + Q_{\epsilon}\psi,
 \end{split}
\end{equation}
 where $Q_{\epsilon} \in L^{\infty}(\R^2;\C^{2\times 2})$, with the $L^\infty$-norm uniformly bounded in $\epsilon\in(0,\beta)$, and $\supp\, Q_{\epsilon} \subset \Sigma_\epsilon$.
According to \eqref{eq:approx.gathering.1} and \eqref{eq:approx.gathering.2}, we get
\begin{equation}\label{eq:strong.convergence.2}
  \ccE_{\eta,\tau,\lambda;\epsilon} \psi_{\epsilon} -
  \ccD_{\eta,\tau,\lambda}\psi 
  =
    -i \sum_{j=1}^2 \sigma_j 
  [(\bU_\epsilon - \bI_2)\partial_j \psi ]
  + Q_{\epsilon}\psi \xrightarrow[\epsilon \to 0]{} 0
  \quad \text{ in }L^2(\R^2;\C^2)
 \end{equation}
 by the dominated convergence, since $\psi\in H^1(\R^2;\C^2)$,
 $\bU_\epsilon-\bI_2$ and $Q_\epsilon$ are uniformly bounded in $\epsilon\in(0,\beta)$ and supported on $\Sigma_\epsilon$, and $\lim_{\epsilon\to 0}\abs{\Sigma_\epsilon} =0$.
 Putting \eqref{eq:strong.convergence.2} and \eqref{eq:conv.L2} together we obtain~\eqref{eq:strong.convergence}.
\end{proof}

The problem of finding regular approximations for $\ccD_{\eta,\tau,\lambda,\omega}$ with $\eta,\tau,\lambda,\omega \in \R$ reduces to the problem of finding the approximations when $\omega=0$. Indeed, according to Theorem \ref{thm:w.gauged}, there exist $X\in\R\setminus\{0\}$ and a unitary operator $U_z$, where $z\in\C:\, |z|=1$ is a parameter that may be calculated in terms of $\eta,\tau,\lambda,\omega$, such that
$U_{\bar z}\ccD_{\eta,\tau,\lambda,\omega} U_z=\ccD_{X\eta,X\tau,X\lambda}$. We will assume that $X^2d>-4$, because if $X^2 d<-4$ then, employing Theorem \ref{thm:w.gauged} again, we can sandwich $\ccD_{X\eta,X\tau,X\lambda}$ by another unitary transform to get $\ccD_{\tilde\eta,\tilde\tau,\tilde\lambda}$ such that $\tilde d=\tilde\eta^2-\tilde\tau^2-\tilde\lambda^2>-4$. Now, using Corollary \ref{thm:approximation.back}, we find a family of approximating operators $\ccE_{\eta',\tau',\lambda';\epsilon}$ such that $\ccE_{\eta',\tau',\lambda';\epsilon}\to\ccD_{X\eta,X\tau,X\lambda}$ in the strong resolvent sense as $\epsilon\to 0$. 
If, for $a\in\R$, we define the unitary multiplication operator
  \begin{equation*}
    W_{a;\epsilon} :=
    \begin{cases}
      \bI_2 \quad &\text{ in }\Omega_+ \setminus \Sigma_\epsilon, \\
      \exp\left[ia \int_{-\epsilon}^{\mathscr{P}_\perp(\cdot)}
        h_\epsilon(t)\,dt \right]\bI_2 \quad &\text{ in }\Sigma_\epsilon, \\
      e^{ia} \bI_2 \quad & \text{ in }\Omega_- \setminus \Sigma_\epsilon.
    \end{cases}
  \end{equation*}
then, with the help of  \eqref{eq:L_grad}, we get
  \begin{equation*}
    W_{a;\epsilon}^* \,  \ccE_{\eta,\tau,\lambda;\epsilon} \,W_{a;\epsilon}
    =
    \ccE_{\eta,\tau,\lambda;\epsilon} + a(\sigma\cdot\mathbf{n}) \,  \chi_{\Sigma_\epsilon} \, h_\epsilon,
  \end{equation*}
 where $\chi_{\Sigma_\epsilon}$ is the indicator function of $\Sigma_\epsilon$.  Note that $\lim_{\epsilon\to 0} W_{-\arg z;\epsilon}=U_{\bar z}$ in the strong operator topology. Recalling that $U_z^{-1}=U_{\bar z}=U_z^*$, we conclude that
 \begin{equation*}
   \begin{split}
  \ccE_{\eta',\tau',\lambda';\epsilon} -\arg z(\sigma\cdot\mathbf{n})
  \,  \chi_{\Sigma_\epsilon} \,  h_\epsilon
  & =
  W_{-\arg z;\epsilon}^* \, \ccE_{\eta',\tau',\lambda';\epsilon}\,
  W_{-\arg z;\epsilon}
  \\
  &   \to U_z\,\ccD_{X\eta,X\tau,X\lambda}\, U_{\bar
    z}=\ccD_{\eta,\tau,\lambda,\omega},
\end{split}
\end{equation*}
in the strong resolvent sense as $\epsilon\to 0$.

\begin{proof}[{{Proof of \Cref{thm:approximation.back}}}] 
The proof is immediate from \Cref{thm:approximation} and
\Cref{lem:going.back.home}.
\end{proof}

\subsection{Alternative approximations for purely magnetic interaction}\label{sec:alternative_approximation}
If $\hat\eta=\hat\tau=0$, and $\hat\lambda\in\R\setminus\{\pm 2\}$ then 
$R_{0,0,\hat\lambda}=\diag\left(\frac{2-\hat\lambda}{2+\hat\lambda},\frac{2+\hat\lambda}{2-\hat\lambda}\right)$
 is constant along $\Sigma$. This makes it possible to construct an alternative sequence of approximations without employing ``parallel coordinates'' $(s,p)$.  The strategy will be to apply the method of \cite{hughes97,hughes99}, that works for any type of one-dimensional $\delta$-interaction. We will restrict ourselves to the case $\hat\lambda\in(-2,2)$, the remaining cases including their approximations may be recovered using unitary equivalences, \cf \Cref{rem:unitary_eq}. Let us start with introducing a bounded operator $\mathcal{W}:=\chi_{\Omega_+}\bI_2+\chi_{\Omega_-}R_{0,0,\hat\lambda}$ in $L^2(\R^2;\C^2)$.

\begin{lemma} \label{lem:D_alt}
$\mathscr{D}_{0,0,\hat\lambda}=\mathcal{W}(-i\sigma\cdot\nabla)
\mathcal{W}$, where the operator at right-hand side is defined on $\{\psi\in L^2(\R^2;\C^2)|\, \mathcal{W}\psi\in H^1(\R^2;\C^2)\}$.
\end{lemma}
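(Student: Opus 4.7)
The plan is to verify the claimed operator identity by checking the equality of domains and the equality of actions separately, and to isolate the single algebraic coincidence that makes the whole construction work.

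For the domain, I would use that $\mathcal{W}$ is a bounded multiplication operator on $L^2(\R^2;\C^2)$ with bounded inverse $\mathcal{W}^{-1}=\chi_{\Omega_+}\bI_2+\chi_{\Omega_-}R_{0,0,\hat\lambda}^{-1}$, and that $R_{0,0,\hat\lambda}$ is a constant matrix on $\Omega_-$. Since an $L^2$-function belongs to $H^1(\R^2;\C^2)$ precisely when its restrictions to $\Omega_\pm$ lie in $H^1(\Omega_\pm;\C^2)$ and their Dirichlet traces agree along $\Sigma$, the condition $\mathcal{W}\psi\in H^1(\R^2;\C^2)$ is equivalent to $\psi_\pm\in H^1(\Omega_\pm;\C^2)$ together with the transmission identity $\mathcal{T}_+^D\psi_+ = R_{0,0,\hat\lambda}\,\mathcal{T}_-^D\psi_-$. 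Now $\mathfrak{C}(0,0,\hat\lambda)=\bigl((2-\hat\lambda)(2+\hat\lambda)/4\bigr)^2\neq 0$ and $d=-\hat\lambda^2\neq -4$ for $\hat\lambda\in(-2,2)$, so Theorem~\ref{thm:self-adjointess.noncritical} gives $\dom\ccD_{0,0,\hat\lambda}\subset H^1(\R^2\setminus\Sigma;\C^2)$, while Lemma~\ref{lem:confinement}~(i) states that the same transmission condition characterizes $\dom\ccD_{0,0,\hat\lambda}$. The two domains therefore coincide.

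For the action (understood in the massless setting, consistent with the reduction $m=0$ used already in Section~\ref{sec:approximation}), I would work piecewise. On $\Omega_+$ one has $\mathcal{W}=\bI_2$, so $\mathcal{W}(-i\sigma\cdot\nabla)\mathcal{W}\psi|_{\Omega_+}=(-i\sigma\cdot\nabla)\psi_+$ directly. On $\Omega_-$, the matrix $R_{0,0,\hat\lambda}=\diag(a,a^{-1})$ with $a=(2-\hat\lambda)/(2+\hat\lambda)$ is constant, so it passes through the derivatives and yields
\[
\mathcal{W}(-i\sigma\cdot\nabla)\mathcal{W}\psi\big|_{\Omega_-} = \sum_{j=1}^{2} R_{0,0,\hat\lambda}\,\sigma_j\, R_{0,0,\hat\lambda}\,(-i\partial_j\psi_-).
\]
The pivotal observation is the algebraic identity $R_{0,0,\hat\lambda}\,\sigma_j\,R_{0,0,\hat\lambda}=\sigma_j$ for $j=1,2$: since $\sigma_1,\sigma_2$ are off-diagonal, sandwiching them between $\diag(a,a^{-1})$ on both sides swaps the entries $a$ and $a^{-1}$ and returns $\sigma_j$. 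Hence the action on $\Omega_-$ also reduces to $(-i\sigma\cdot\nabla)\psi_-$, matching $\ccD_{0,0,\hat\lambda}$.

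I do not anticipate a genuine technical obstacle; the only subtle point is precisely the identity $R\sigma_j R=\sigma_j$, which reflects the reason the construction is restricted to the purely magnetic case: the interaction matrix $B_{0,0,\hat\lambda}=\hat\lambda(\sigma\cdot\mathbf{t})$ produces an $R$ that is diagonal and ``conjugates away'' under $\sigma_1,\sigma_2$, whereas the analogous identity with $\sigma_3$ in place of $\sigma_j$ fails, which is also why a mass term would have to be handled separately. Everything else is bookkeeping: reading the $H^1$-trace matching across $\Sigma$ as the transmission condition of Lemma~\ref{lem:confinement}, and moving the constant matrix $R_{0,0,\hat\lambda}$ past the derivative operator in $\Omega_-$.
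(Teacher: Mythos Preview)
Your proposal is correct and uses the same essential ingredients as the paper: the transmission characterization from Lemma~\ref{lem:confinement}(i), the $H^1$-regularity of $\dom\ccD_{0,0,\hat\lambda}$ from Theorem~\ref{thm:self-adjointess.noncritical}, and the algebraic identity $R_{0,0,\hat\lambda}\,\sigma_j\,R_{0,0,\hat\lambda}=\sigma_j$ for $j=1,2$. The only minor difference is organizational: the paper first observes that $\mathcal{W}(-i\sigma\cdot\nabla)\mathcal{W}$ is self-adjoint (since $\mathcal{W}=\mathcal{W}^*$ is bounded with bounded inverse) and then checks only the inclusion $\ccD_{0,0,\hat\lambda}\subset\mathcal{W}(-i\sigma\cdot\nabla)\mathcal{W}$, whereas you establish the equality of domains in both directions by hand---but this is a matter of packaging, not substance.
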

\begin{proof}
Since $-i\sigma\cdot\nabla$ is self-adjoint on $H^1(\R^2;\C^2)$, $\mathcal{W}^*=\mathcal{W}$, and $\mathcal{W}$ together with $\mathcal{W}^{-1}$ are bounded, $\mathcal{W}(-i\sigma\cdot\nabla) \mathcal{W}$ is also self-adjoint. Therefore, it is sufficient to show that $\mathscr{D}_{0,0,\hat\lambda}\subset \mathcal{W}(-i\sigma\cdot\nabla) \mathcal{W}$. Take $\psi\equiv\psi_+\oplus\psi_- \in\dom(\mathscr{D}_{0,0,\hat\lambda})\subset H^1(\Omega_+;\C^2)\oplus H^1(\Omega_-;\C^2)$. Then $\mathcal{T}_{+}^D(\mathcal{W}\psi)_+=\mathcal{T}_{+}^D\psi_+$ and, by \eqref{eq:equality_traces_1}, $\mathcal{T}_{-}^D(\mathcal{W}\psi)_-=R_{0,0,\hat\lambda}\mathcal{T}_{-}^D\psi_-=\mathcal{T}_{+}^D\psi_+$. Hence, $\mathcal{W}\psi\in H^1(\R^2;\C^2)$, \ie,  $\psi\in\dom(\mathcal{W}(-i\sigma\cdot\nabla) \mathcal{W})$. Finally, using the fact that for $j=1,2,$ $R_{0,0,\hat\lambda}\sigma_j R_{0,0,\hat\lambda}=\sigma_j$, we get
\begin{equation*}
  \begin{split}
\mathcal{W}\sigma\cdot\nabla
\mathcal{W}\psi & =\chi_{\Omega_+}\sigma\cdot\nabla\psi_++\chi_{\Omega_-}R_{0,0,\hat\lambda}\sigma
R_{0,0,\hat\lambda}\cdot\nabla\psi_- \\
& =\sigma\cdot\nabla\psi_+\oplus\sigma\cdot\nabla\psi_-=i
\mathscr{D}_{0,0,\hat\lambda}\psi.
\qedhere
\end{split}
\end{equation*}
\end{proof}

Next, let $(g_\epsilon)_{\epsilon>0}$ be the standard two-dimensional mollifiers, \ie, 
\begin{equation*} 
g_\epsilon(x):=\frac{1}{\epsilon^{2}}g\left(\frac{x}{\epsilon}\right) \text{ with } g\in C^\infty(\R^2;[0,+\infty)):\,\supp(g)\subset\overline{B(0,1)}\text{ and }\int_{B(0,1)}g=1.
\end{equation*} 
Note that we may write $\mathcal{W}=\exp(-\lambda\chi_{\Omega_-}\sigma_3)$ with $\lambda:=2\arctanh\frac{\hat\lambda}{2}$, because $R_{0,0,\hat\lambda}=\exp(-\lambda\sigma_3)$. This suggests to introduce $\mathcal{W}_\epsilon:=\exp(-\lambda\chi_{\Omega_-}^\epsilon\sigma_3)$, where $\chi_{\Omega_-}^\epsilon:=g_\epsilon*\chi_{\Omega_-}.$ Then we have

\begin{proposition}
Let $\hat\lambda\in(-2,2)$ be constant and $\lambda=2\arctanh\frac{\hat\lambda}{2}$. Then
\begin{equation*} 
\mathscr{D}_0+\lambda(\sigma_2, -\sigma_1)\cdot \nabla\chi_{\Omega_-}^\epsilon\xrightarrow[\epsilon \to 0]{} \mathscr{D}_{0,0,\hat\lambda}
\end{equation*}
in the strong resolvent sense. 
\end{proposition}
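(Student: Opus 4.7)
The plan is to exploit the representation from Lemma~\ref{lem:D_alt} and mimic it at the regularized level by replacing $\chi_{\Omega_-}$ with its mollification $\chi_{\Omega_-}^\epsilon$. Since $\mathscr{D}_0$ differs from $-i\sigma\cdot\nabla$ only by the bounded symmetric perturbation $m\sigma_3$, which also appears on both sides, strong graph (hence strong resolvent) convergence is stable under it and we may assume $m=0$, as done in the proof of Theorem~\ref{thm:approximation}. Since $\chi_{\Omega_-}^\epsilon\in C^\infty(\R^2;[0,1])$, the matrix multiplier $\mathcal{W}_\epsilon=\exp(-\lambda\chi_{\Omega_-}^\epsilon\sigma_3)$ is smooth, bounded, Hermitian, and has bounded smooth inverse $\mathcal{W}_\epsilon^{-1}=\exp(\lambda\chi_{\Omega_-}^\epsilon\sigma_3)$; in particular $\mathcal{W}_\epsilon$ and $\mathcal{W}_\epsilon^{-1}$ map $H^1(\R^2;\C^2)$ onto itself.

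The first step is the algebraic identity
\begin{equation}\label{eq:alt_key}
\mathcal{W}_\epsilon(-i\sigma\cdot\nabla)\mathcal{W}_\epsilon\psi
=-i\sigma\cdot\nabla\psi+\lambda(\sigma_2,-\sigma_1)\cdot(\nabla\chi_{\Omega_-}^\epsilon)\,\psi,\qquad \psi\in H^1(\R^2;\C^2).
\end{equation}
The anticommutation $\sigma_j\sigma_3=-\sigma_3\sigma_j$ for $j=1,2$ implies $\mathcal{W}_\epsilon\sigma_j=\sigma_j\mathcal{W}_\epsilon^{-1}$, and hence $\mathcal{W}_\epsilon\sigma_j\mathcal{W}_\epsilon=\sigma_j$. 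Combined with $\partial_j\mathcal{W}_\epsilon=-\lambda(\partial_j\chi_{\Omega_-}^\epsilon)\sigma_3\mathcal{W}_\epsilon$ and the Pauli identities $\sigma_1\sigma_3=-i\sigma_2$, $\sigma_2\sigma_3=i\sigma_1$, a direct product-rule computation yields \eqref{eq:alt_key}. As a consequence, $\mathscr{D}_0+\lambda(\sigma_2,-\sigma_1)\cdot\nabla\chi_{\Omega_-}^\epsilon$ acts on $H^1(\R^2;\C^2)$ exactly as $\mathcal{W}_\epsilon(-i\sigma\cdot\nabla)\mathcal{W}_\epsilon$. (Its self-adjointness on $H^1(\R^2;\C^2)$ is independently guaranteed by the Kato--Rellich theorem, since $\lambda(\sigma_2,-\sigma_1)\cdot\nabla\chi_{\Omega_-}^\epsilon$ is a bounded Hermitian matrix-multiplication operator.)

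The second step is to build a strong graph approximating sequence. Given $\psi\in\dom\mathscr{D}_{0,0,\hat\lambda}$, Lemma~\ref{lem:D_alt} asserts that $\phi:=\mathcal{W}\psi\in H^1(\R^2;\C^2)$ and $\mathscr{D}_{0,0,\hat\lambda}\psi=\mathcal{W}(-i\sigma\cdot\nabla)\phi$. Set
\[
\psi_\epsilon:=\mathcal{W}_\epsilon^{-1}\phi\in H^1(\R^2;\C^2)=\dom\bigl(\mathscr{D}_0+\lambda(\sigma_2,-\sigma_1)\cdot\nabla\chi_{\Omega_-}^\epsilon\bigr),
\]
so that $\mathcal{W}_\epsilon\psi_\epsilon=\phi$ and, by \eqref{eq:alt_key},
\[
\bigl(\mathscr{D}_0+\lambda(\sigma_2,-\sigma_1)\cdot\nabla\chi_{\Omega_-}^\epsilon\bigr)\psi_\epsilon
=\mathcal{W}_\epsilon(-i\sigma\cdot\nabla)\mathcal{W}_\epsilon\psi_\epsilon
=\mathcal{W}_\epsilon(-i\sigma\cdot\nabla)\phi.
\]
Since $\chi_{\Omega_-}^\epsilon\to\chi_{\Omega_-}$ pointwise a.e.\ (as a standard mollification of an $L^1_{\rm loc}$ function), it follows that $\mathcal{W}_\epsilon\to\mathcal{W}$ a.e., while $\|\mathcal{W}_\epsilon\|_\infty,\|\mathcal{W}_\epsilon^{-1}\|_\infty$ are controlled uniformly by $e^{|\lambda|}$. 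Dominated convergence yields
\[
\psi_\epsilon=\mathcal{W}_\epsilon^{-1}\phi\xrightarrow[\epsilon\to 0]{}\mathcal{W}^{-1}\phi=\psi,\qquad
\mathcal{W}_\epsilon(-i\sigma\cdot\nabla)\phi\xrightarrow[\epsilon\to 0]{}\mathcal{W}(-i\sigma\cdot\nabla)\phi=\mathscr{D}_{0,0,\hat\lambda}\psi
\]
in $L^2(\R^2;\C^2)$, using $(-i\sigma\cdot\nabla)\phi\in L^2(\R^2;\C^2)$.

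The final step is to invoke Theorem~VIII.26 of \cite{reedsimon1}: since the approximating operators and the limit operator are all self-adjoint and we have exhibited a strong graph limit sequence, strong graph convergence is equivalent to strong resolvent convergence, and the proposition follows. The only delicate point is the algebraic identity \eqref{eq:alt_key}, which relies entirely on the Pauli-matrix anticommutation relations; once this is in hand, the rest is a standard dominated-convergence argument, with no smallness assumption on the coupling and no recourse to the tubular coordinates $(s,p)$ used in the proof of Theorem~\ref{thm:approximation}.
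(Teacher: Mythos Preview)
Your proof is correct and follows essentially the same approach as the paper: you establish the algebraic identity $\mathcal{W}_\epsilon(-i\sigma\cdot\nabla)\mathcal{W}_\epsilon=-i\sigma\cdot\nabla+\lambda(\sigma_2,-\sigma_1)\cdot\nabla\chi_{\Omega_-}^\epsilon$ via the anticommutation $\mathcal{W}_\epsilon\sigma_j\mathcal{W}_\epsilon=\sigma_j$, then prove strong graph convergence and invoke \cite[Theorem~VIII.26]{reedsimon1}. The only difference is cosmetic---the paper outsources the strong graph convergence details to \cite[Theorem~2]{hughes97}, whereas you spell out the approximating sequence $\psi_\epsilon=\mathcal{W}_\epsilon^{-1}\mathcal{W}\psi$ explicitly (which is exactly the construction in that reference).
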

\begin{proof}
First, using Lemma \ref{lem:D_alt}, one shows that the self-adjoint operator $\mathscr{D}_\lambda^\epsilon:=\mathcal{W}_\epsilon(-i\sigma\cdot\nabla)\mathcal{W}_\epsilon$  defined on $\{\psi\in L^2(\R^2;\C^2)|\, \mathcal{W}_\epsilon\psi\in H^1(\R^2;\C^2)\}$ converges to $\mathscr{D}_{0,0,\hat\lambda}$ in the strong graph limit sense as $\epsilon\to 0$; for details see the proof of \cite[Theorem 2]{hughes97}. This implies also the strong resolvent convergence. 
Since
$\dom(\mathscr{D}_\lambda^\epsilon)=\mathcal{W}_\epsilon^{-1}H^1(\R^2;\C^2)$
and both $\mathcal{W}_\epsilon$ and $\mathcal{W}_\epsilon^{-1}$,
viewed as matrix-valued functions, are smooth and bounded (including
their derivatives), $\dom \mathscr{D}_\lambda^\epsilon =H^1(\R^2;\C^2)$. Next, for any $\psi\in\dom(\mathscr{D}_\lambda^\epsilon)$, we have
\begin{multline*}
\mathscr{D}_\lambda^\epsilon\psi=\mathcal{W}_\epsilon(-i\sigma\cdot\nabla)\mathcal{W}_\epsilon\psi=-i\mathcal{W}_\epsilon\sigma \mathcal{W}_\epsilon\cdot\nabla\psi+i\lambda\mathcal{W}_\epsilon\sigma \mathcal{W}_\epsilon\cdot\nabla\chi_{\Omega_-}^\epsilon \sigma_3\psi\\
=-i\sigma\cdot\nabla\psi+i\lambda\sigma\cdot\nabla\chi_{\Omega_-}^\epsilon \sigma_3\psi,
\end{multline*}
where we used the observation that $\mathcal{W}_\epsilon\sigma \mathcal{W}_\epsilon=\sigma$ in the last equality. Therefore, $\mathscr{D}_\lambda^\epsilon=\mathscr{D}_0+\lambda(\sigma_2, -\sigma_1)\cdot \nabla\chi_{\Omega_-}^\epsilon$.
\end{proof}

\section{Final remark: higher dimensions}
\label{sec:higher.dimensions}
We conclude the paper with a discussion on a possible generalization of our results to higher dimensional cases.

  It is possible to define an analogue of the magnetic interaction in
  higher dimensions. This is not immediate, because the tangent unit
  vector is not uniquely defined. However,  since $\sigma\cdot \mathbf{t} =i(\sigma\cdot\mathbf{n})\sigma_3$, see \eqref{eq:sigma.t}, this issue may be overcome. We rewrite the formal expression for \eqref{eq:Dirac.shells} as follows
\begin{equation*}
    D_{\eta,\tau,\lambda,\omega} =
    D_0
    + (\eta \bI_2 + \tau \sigma_3
    + \lambda \, i(\sigma\cdot \mathbf{n})\sigma_3
    + \omega (\sigma\cdot \mathbf{n}))\delta_\Sigma.
 \end{equation*}

 Put $N := 2^{\lfloor \frac{n+1}{2}\rfloor}$, where
$\lfloor \cdot \rfloor$ denotes the integer part of a real number.
It is well known (see, \eg~\cite{friedrich2000dirac,jost2008riemannian})
that there exist Hermitian matrices
$\alpha_1,\dots,\alpha_n, \alpha_{n+1} \in \C^{N\times N}$
that satisfy the anticommutation relations
\begin{equation*} 
  \alpha_j \alpha_k +
  \alpha_k \alpha_j =
  2 \delta_{j,k} \mathbb{I}_N, 
  \quad 
  1\leq j,k \leq n+1,
\end{equation*}
where $\delta_{j,k}$ stands for the Kronecker delta.
The Dirac differential expression with a
$\delta$--shell interaction in $\R^n$ acts on functions $\psi : \R^n \to \C^{N}$  as follows:
\begin{equation*}
  D_{\eta,\tau,\lambda,\omega}^{[n]}  :=
  -i \alpha \cdot \nabla + (\eta \bI_N + \tau\alpha_{n+1} 
  + \lambda i (\alpha \cdot \mathbf{n})\alpha_{n+1}
  + \omega (\alpha\cdot\mathbf{n}))\delta_\Sigma,
\end{equation*}
where $\alpha \cdot \nabla:=\sum_{j=1}^n \alpha_j \partial_j$.
 In particular, we define the Dirac differential expression in $\R^3$ with a $\delta$--shell
 interaction as follows: denoting $\beta = \alpha_4$, 
 \begin{equation*}
   D^{[3]}_{\eta,\tau,\lambda,\omega} =
   -i\alpha\cdot\nabla +m\beta + (\eta \bI_4 + \tau \beta
  + \lambda i (\alpha \cdot \mathbf{n})\beta
  + \omega (\alpha\cdot\mathbf{n}))\delta_\Sigma.
\end{equation*}
Adopting the terminology used for the potentials, we will call the
interaction $\lambda i (\alpha\cdot\mathbf{n}) \beta \,\delta_\Sigma$ the
\emph{anomalous-magnetic} $\delta$--shell interaction.

We point out that when we were finishing this work we learnt that the
three dimensional case was being considered in \cite{benhellal2021}. In there, the
author introduces the $\delta$--shell interaction corresponding to the
differential expression $D_0^{[3]}+(\xi\gamma_5+i\lambda(\alpha\cdot
\mathbf{n})\beta)\delta_\Sigma$, for $\xi,\lambda \in \R$
and  $\gamma_5 := -i\alpha_1\alpha_2\alpha_3$.
Using the strategy developed in \cite{amv1}, that is based
on fundamental solutions, in \cite[Section 6]{benhellal2021} the author shows some
results which, in the case $\xi=\eta=\tau=0$, agree with our Theorems
\ref{thm:D.shell.introduction}, \ref{thm:confinement.introduction},
and (the statements about self-adjointness in) \ref{thm:critical}. It is
worth mentioning that his approach also works on surfaces $\Sigma$
with low regularity. In this direction, see also
\cite{rabinovich2020boundary}, where general local interactions are
considered, although no explicit reference to the anomalous magnetic
potential is made. 

\appendix
\section{Lemmata on exponential matrices}
\label{sec:Appendix}
Let $\mathbf{t}=(t_1,t_2)$ be a unit vector and $\mathbf{n}:=(t_2,-t_1)$.
Using the shorthand notation \eqref{eq:notation}, for $\eta,\tau,\lambda \in \R$ let us consider the Hermitian
matrix
\begin{equation}\label{defn:B}
  B_{\eta,\tau,\lambda}:= \eta \bI_2 + \tau \sigma_3
  + \lambda (\sigma\cdot\mathbf{t}).
\end{equation}
For $\hat{\eta},\hat{\tau},\hat{\lambda} \in \R$ such that $\hat{d}:=\hat{\eta}^2 -\hat{\tau}^2
-\hat{\lambda}^2 \neq -4$, put 
\begin{equation}\label{def_R_eta_appendix}
  R_{\hat{\eta},\hat{\tau},\hat{\lambda}} :=  \frac{4}{4+\hat{d}}\left(\frac{4-\hat{d}}{4}\,\bI_2+i\hat{\eta}(\sigma\cdot\mathbf{n})+\hat{\tau}
    (\sigma\cdot\mathbf{t}) -\hat{\lambda}\sigma_3 \right).
\end{equation}
In this appendix we address the following questions:
\begin{enumerate}[label=$({\roman*})$]
\item\label{item:existence}  Given $\eta,\tau,\lambda \in \R$, is it possible to find $\hat{\eta},\hat{\tau},\hat{\lambda} \in \R$ such that $\hat d\neq -4$ and
  \begin{equation*}
  \exp[i(\sigma\cdot\mathbf{n})B_{\eta,\tau,\lambda}] =
  R_{\hat{\eta},\hat{\tau},\hat{\lambda}} \, ?
\end{equation*}

\item\label{item:uniqueness} Is this correspondence bijective?
\end{enumerate}
Similar questions were already considered in \cite[Appendix]{tusek19}.

The following lemma gives an answer to question
\ref{item:existence}.
\begin{lemma}\label{lem:exp.isnB}
  Let $\eta,\tau,\lambda \in \R$, $d := \eta^2 - \tau^2 -\lambda^2$,  and 
  $B_{\eta,\tau,\lambda}$ and $R_{\hat{\eta},\hat{\tau},\hat{\lambda}}$ be given by \eqref{defn:B} and \eqref{def_R_eta_appendix}, respectively.
  Let $\hat{\eta},\hat{\tau},\hat{\lambda} \in \R$ be such that  $\hat{d}=
  \hat{\eta}^2 - \hat{\tau}^2 - \hat{\lambda}^2\neq -4$.
  Then $ R_{\hat{\eta},\hat{\tau},\hat{\lambda}} = \exp
  [i(\sigma\cdot \mathbf{n}) B_{\eta,\tau,\lambda}]$ if and only if one of the following holds
  \begin{equation}  \label{eq:going.d>0}
    \left. \begin{aligned} 
     &\bullet\, d>0,\, d \neq (2k+1)^2 \pi^2 \text{ for all }  k \in \N_0,\text{ and }   		      (\hat{\eta},\hat{\tau},\hat{\lambda}) = \dfrac{\tan(\sqrt{d}/2)}{\sqrt{d}/2}      (\eta,\tau,\lambda)
      \\
     &\bullet\, d=0\text{ and }(\hat{\eta},\hat{\tau},\hat{\lambda}) = (\eta,\tau,\lambda)
      \\
     &\bullet\, d<0\text{ and }(\hat{\eta},\hat{\tau},\hat{\lambda})
     = \dfrac{\tanh(\sqrt{-d}/2)}{\sqrt{-d}/2} (\eta,\tau,\lambda).
    \end{aligned} \right\}
  \end{equation} 
  If $d = (2k_0+1)^2\pi^2$, for $k_0 \in \N_0$, there are no
  $\hat{\eta},\hat{\tau},\hat{\lambda} \in \R$  such that
  $ R_{\hat{\eta},\hat{\tau},\hat{\lambda}} = \exp [i(\sigma\cdot
  \mathbf{n}) B_{\eta,\tau,\lambda}]$.
\end{lemma}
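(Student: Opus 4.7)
The plan is to reduce the matrix exponential to a closed two-term form by exploiting the relation $A^{2}=-d\,\bI_{2}$, where $A:=i(\sigma\cdot\mathbf{n})B_{\eta,\tau,\lambda}$. First, using the Pauli identities~\eqref{eq:sigma.t} and~\eqref{eq:sigma_n.t}, I would rewrite
\begin{equation*}
A= -\lambda\sigma_{3}+\tau(\sigma\cdot\mathbf{t})+i\eta(\sigma\cdot\mathbf{n}).
\end{equation*}
Expanding $A$ in the basis $\{\sigma_{1},\sigma_{2},\sigma_{3}\}$ and invoking the elementary identity $(c_{1}\sigma_{1}+c_{2}\sigma_{2}+c_{3}\sigma_{3})^{2}=(c_{1}^{2}+c_{2}^{2}+c_{3}^{2})\bI_{2}$ for arbitrary $c_{j}\in\C$, together with $|\mathbf{n}|=|\mathbf{t}|=1$ and $\mathbf{n}\cdot\mathbf{t}=0$, yields $A^{2}=(-\eta^{2}+\tau^{2}+\lambda^{2})\,\bI_{2}=-d\,\bI_{2}$.

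Next, summing the even and odd powers in $\exp(A)=\sum_{k\ge 0}A^{k}/k!$ separately, the identity $A^{2}=-d\,\bI_{2}$ gives
\begin{equation*}
\exp(A)=
\begin{cases}
\cos(\sqrt{d})\,\bI_{2}+\dfrac{\sin(\sqrt{d})}{\sqrt{d}}\,A, & d>0,\\[4pt]
\bI_{2}+A, & d=0,\\[4pt]
\cosh(\sqrt{-d})\,\bI_{2}+\dfrac{\sinh(\sqrt{-d})}{\sqrt{-d}}\,A, & d<0.
\end{cases}
\end{equation*}
In parallel, I would rewrite \eqref{def_R_eta_appendix} as $R_{\hat\eta,\hat\tau,\hat\lambda}=\tfrac{4-\hat d}{4+\hat d}\bI_{2}+\tfrac{4}{4+\hat d}\hat A$, where $\hat A:=i(\sigma\cdot\mathbf{n})B_{\hat\eta,\hat\tau,\hat\lambda}$ is obtained from $A$ by replacing the unhatted parameters by hatted ones.

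Since $\{\bI_{2},\sigma_{1},\sigma_{2},\sigma_{3}\}$ is a basis of $\C^{2\times 2}$ and $\{\mathbf{n},\mathbf{t}\}$ is an orthonormal basis of $\R^{2}$, matching coefficients in $\exp(A)=R_{\hat\eta,\hat\tau,\hat\lambda}$ splits into a scalar identity for the multiples of $\bI_{2}$ and a componentwise identity for $A$ versus $\hat A$ that pins $(\hat\eta,\hat\tau,\hat\lambda)$ uniquely as a real scalar multiple of $(\eta,\tau,\lambda)$. In the case $d>0$, the $\bI_{2}$-identity reads $\cos(\sqrt d)=\tfrac{4-\hat d}{4+\hat d}$; the half-angle formula $\tan^{2}(x/2)=(1-\cos x)/(1+\cos x)$ then gives $\hat d=4\tan^{2}(\sqrt d/2)$ and hence $4+\hat d=4/\cos^{2}(\sqrt d/2)$. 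Substituting this together with $\sin\sqrt d=2\sin(\sqrt d/2)\cos(\sqrt d/2)$ into the $A$-equation produces exactly $(\hat\eta,\hat\tau,\hat\lambda)=\tfrac{\tan(\sqrt d/2)}{\sqrt d/2}(\eta,\tau,\lambda)$. The case $d=0$ is immediate. The case $d<0$ is entirely analogous, using the hyperbolic half-angle identity $\tanh^{2}(x/2)=(\cosh x-1)/(\cosh x+1)$.

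The only delicate point is the bookkeeping in the exceptional values of $d$. For $d=(2k_{0}+1)^{2}\pi^{2}$ one has $\cos\sqrt d=-1$, so the $\bI_{2}$-identity degenerates to $\tfrac{4-\hat d}{4+\hat d}=-1$, which admits no solution in the admissible set $\hat d\neq -4$; this is the non-existence assertion at the end of the lemma. For $d=(2k)^{2}\pi^{2}$ with $k\ge 1$ both sides lose their $A$-term, and one must verify directly that the formula \eqref{eq:going.d>0} nonetheless returns the correct answer $(\hat\eta,\hat\tau,\hat\lambda)=(0,0,0)$ (consistent with $R_{0,0,0}=\bI_{2}$); this is automatic since $\tan(k\pi)=0$. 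I expect this case-checking together with the verification that $\hat d\neq -4$ holds for all admissible outputs (which follows from $\hat d=4\tan^{2}(\sqrt d/2)\ge 0$ and $\hat d=-4\tanh^{2}(\sqrt{-d}/2)>-4$) to be the main, though still routine, obstacle.
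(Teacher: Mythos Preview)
Your proof is correct and follows essentially the same route as the paper: both compute $\exp(A)$ in the closed form $\cos\sqrt d\,\bI_2+\tfrac{\sin\sqrt d}{\sqrt d}A$ (the paper via a general $2\times2$ exponential formula, you via $A^2=-d\,\bI_2$ and the power series), then match coefficients in the basis $\{\bI_2,\sigma_3,\sigma\cdot\mathbf t,\sigma\cdot\mathbf n\}$ against $R_{\hat\eta,\hat\tau,\hat\lambda}$ and conclude with the half-angle identity $\tan(\theta/2)=\sin\theta/(1+\cos\theta)$. The only cosmetic difference is that the paper keeps the three sign cases of $d$ unified through the complex square root, whereas you split them explicitly.
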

\begin{proof}
  Denoting
  $T = t_1 + i t_2$ for $\mathbf{t} = (t_1,t_2)$, we get
  \begin{equation}\label{eq:isn.B}
    i(\sigma\cdot \mathbf{n}) B_{\eta,\tau,\lambda} =
    \begin{pmatrix}
      - \lambda & (\tau - \eta) \overline{T} \\
      (\tau + \eta)T &   \lambda
    \end{pmatrix}.
  \end{equation}
   The exponential of a general $2 \times 2$ matrix $A$ is
\begin{equation}\label{eq:exp.A}
    \exp[A] =
    \exp \left[\frac{\text{Tr}A}{2}\right]
    \left( \cos \nu \, \bI_2 + \frac{\sin \nu}{\nu}\left(A -
        \frac{\text{Tr}A}{2}\bI_2\right)\right),
    \end{equation}
  with
  \begin{equation*}
  \nu = \sqrt{\det A - \left(\frac{\text{Tr}A}{2}\right)^2} \in \C,
\end{equation*}
considering the principal branch of the square root, and
where $\sin\nu/\nu$ is intended to be equal to $1$ when $\nu =0$, see, \eg \cite{bernstein1993some}, \cite[Appendix]{tusek19}.
Plugging \eqref{eq:isn.B} into \eqref{eq:exp.A} we get
\begin{equation*}
  \begin{split}
  \exp[i(\sigma\cdot\mathbf{n})B_{\eta,\tau,\lambda}] = & 
   \cos \sqrt{d} \, \bI_2 + \frac{\sin \sqrt{d}}{\sqrt{d}}
     i (\sigma\cdot\mathbf{n}) B_{\eta,\tau,\lambda}
  \\
  = &
  \cos \sqrt{d} \, \bI_2
  + \frac{\sin \sqrt{d}}{\sqrt{d}}
  \left(
    i(\sigma\cdot\mathbf{n}) \eta
  + \tau (\sigma\cdot \mathbf{t})
  -  \lambda \sigma_3\right).
\end{split} 
\end{equation*}

Since the matrices $\{\bI_2, \sigma_3,
\sigma\cdot\mathbf{t}, \sigma\cdot\mathbf{n}\}$ are a basis of the
Hermitian $2\times 2$ matrices,  we have 
$ R_{\hat{\eta},\hat{\tau},\hat{\lambda}} = \exp[i(\sigma\cdot\mathbf{n})B_{\eta,\tau,\lambda}]$ if and only if the
coefficients with respect to this basis are equal, \ie
\begin{align}
    \cos \sqrt{d} &= \frac{4-\hat{d}}{4+\hat{d}}  \label{eq:conditions.equality.2a}\\
     \frac{\sin \sqrt{d}}{\sqrt{d}} (\eta,\tau,\lambda)
    &= \frac{4}{4+\hat{d}} (\hat{\eta},\hat{\tau},\hat{\lambda}) 			       \label{eq:conditions.equality.2b}.
\end{align}
For $d = (2k_0+1)^2\pi^2$ with $k_0 \in \N_0$, \eqref{eq:conditions.equality.2a} has no solution $\hat{d}\in
\R\setminus\{-4\}$. Consequently, there are no
  $\hat{\eta},\hat{\tau},\hat{\lambda} \in \R$  such that
  $ R_{\hat{\eta},\hat{\tau},\hat{\lambda}} = \exp [i(\sigma\cdot
  \mathbf{n}) B_{\eta,\tau,\lambda}]$.
  We consider now $d\in\R$ such that
$d \neq (2k+1)^2\pi^2$, for all $k\in \N_0$.
Dividing \eqref{eq:conditions.equality.2b} by $1+\cos\sqrt{d}$ and using \eqref{eq:conditions.equality.2a}, we get
\begin{equation*}
  \frac{\sin\sqrt{d}}{1+\cos\sqrt{d}} \frac{1}{\sqrt{d}/2}
  (\eta,\tau,\lambda)
  = (\hat{\eta},\hat{\tau},\hat{\lambda}).
\end{equation*}
We conclude the proof applying the elementary identity
  \begin{equation*}
      \tan \frac{\theta}{2} = \frac{\sin\theta }{1+\cos\theta} \quad
      \text{ for all }\theta \in \C \setminus \{(2k+1)\pi \mid k \in
      \Z \},
    \end{equation*}
    and recalling that, for all  $d<0$, we have
    \begin{equation*}
      \frac{\tan\sqrt{d}/2}{\sqrt{d}/2}  =
      \frac{\tanh \sqrt{-d}/2}{\sqrt{-d}/2}.
      \qedhere
    \end{equation*}
  \end{proof}

  By \Cref{lem:exp.isnB}, the function $d := \eta^2 - \tau^2 -\lambda^2 \mapsto \hat
  d = \hat{\eta}^2 - \hat{\tau}^2 -\hat{\lambda}^2$ maps $d\in [0,+\infty)$ to $\hat{d} \in [0,+\infty)$ and $d\in
  (-\infty,0)$ to $\hat{d}\in(-4,0)$. Consequently,
  \ref{item:uniqueness} has a negative answer: the correspondence
  between $(\eta,\tau,\lambda)$ and
  $(\hat{\eta},\hat{\tau},\hat{\lambda})$ is not surjective 
  since one
  can not find $(\eta,\tau,\lambda) \in \R^3$ such that $
  R_{\hat{\eta},\hat{\tau},\hat{\lambda}} = \exp
  [i(\sigma\cdot \mathbf{n}) B_{\eta,\tau,\lambda}]$
  when $\hat{\eta},\hat{\tau},\hat{\lambda}$ are such that $\hat{d} < -4$.
  Moreover, the correspondence is not injective when
  $d \geq 0$, as the following lemma shows.
  
\begin{lemma}\label{lem:going.back.home}
    Let $\hat{\eta},\hat{\tau},\hat{\lambda} \in \R$ such that
    $\hat{d} := \hat{\eta}^2 - \hat{\tau}^2 -\hat{\lambda}^2 > -4$,
    and let $R_{\hat{\eta},\hat{\tau},\hat{\lambda}}$ 
    as in \eqref{def_R_eta_appendix}. Let $\eta,\tau,\lambda \in \R$,
    $d := \eta^2 - \tau^2 -\lambda^2$ and 
    $B_{\eta,\tau,\lambda}$ as in \eqref{defn:B}.
    Then $ R_{\hat{\eta},\hat{\tau},\hat{\lambda}} = \exp
    [i(\sigma\cdot \mathbf{n}) B_{\eta,\tau,\lambda}]$ or equivalently
    \eqref{eq:going.d>0} holds if and only if one of the following holds:
    \begin{align}
    &\bullet\, \hat{d} > 0 \text{ and }
        (\eta,\tau,\lambda) = \frac{\arctan \sqrt{\hat{d}}/2 +
          k\pi}{\sqrt{\hat{d}}/2}
        (\hat{\eta},\hat{\tau},\hat{\lambda}),
         \text{ for } k \in \Z
        \label{eq:back} \\   
     &\begin{aligned}\,\bullet\,\,  &\hat{d} = 0 \text{ and } (\eta,\tau,\lambda) = (\hat{\eta},\hat{\tau},\hat{\lambda}); \text{ for } 
     \hat{d} = \hat{\eta} = \hat{\tau} = \hat{\lambda} = 0, \text{ also
      any } \eta,\tau,\lambda \in \R\\
       &\text{such that } d =(2 k_0\pi)^2, \text{ for } k_0 \in \N, \text{ are admissible} \end{aligned} \nonumber \\
    &\bullet\, -4 < \hat{d} < 0 \text{ and }
        (\eta,\tau,\lambda) = \frac{\arctanh \sqrt{-\hat{d}}/2}{\sqrt{-\hat{d}}/2}
        (\hat{\eta},\hat{\tau},\hat{\lambda}) \nonumber.
    \end{align}
\end{lemma}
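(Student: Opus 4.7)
The plan is to apply Lemma~\ref{lem:exp.isnB} and then invert the scalar relations in \eqref{eq:going.d>0}. By that lemma, the identity $R_{\hat\eta,\hat\tau,\hat\lambda}=\exp[i(\sigma\cdot\mathbf n)B_{\eta,\tau,\lambda}]$ holds if and only if one of the three alternatives in \eqref{eq:going.d>0} holds (together with the side exclusion $d\neq(2k+1)^2\pi^2$). It therefore suffices to determine, for each admissible value of $\hat d$, which values of $d$ and which triples $(\eta,\tau,\lambda)$ produce the prescribed $(\hat\eta,\hat\tau,\hat\lambda)$.

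The first step is to express $\hat d$ in terms of $d$. Squaring the scalar prefactor in each branch of \eqref{eq:going.d>0} and using $\hat\eta^2-\hat\tau^2-\hat\lambda^2=\mathrm{(prefactor)}^2(\eta^2-\tau^2-\lambda^2)$, one obtains
\begin{equation*}
\hat d=\begin{cases}4\tan^2(\sqrt d/2)\geq 0 & \text{if } d>0,\ d\neq(2k+1)^2\pi^2,\\ 0 & \text{if }d=0,\\ -4\tanh^2(\sqrt{-d}/2)\in(-4,0) & \text{if }d<0,\end{cases}
\end{equation*}
with $\hat d=0$ in the first line occurring exactly when $\sqrt d/2\in\pi\Z$, i.e.\ $d=(2k\pi)^2$ for some $k\in\N$. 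In particular, any given $\hat d>-4$ falls into exactly one of the three regimes $\hat d>0$, $\hat d=0$, $\hat d\in(-4,0)$ appearing in the statement, and this matching makes the inversion unambiguous regime-by-regime.

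The second step is the actual inversion. For $\hat d>0$, from $\tan(\sqrt d/2)=\pm\sqrt{\hat d}/2$ I get $\sqrt d/2=\pm\arctan(\sqrt{\hat d}/2)+k\pi$ with $k\in\Z$; substituting back into $(\eta,\tau,\lambda)=\tfrac{\sqrt d/2}{\tan(\sqrt d/2)}(\hat\eta,\hat\tau,\hat\lambda)$ and absorbing both sign choices into the range of $k$ via the relabelling $k\mapsto-k$ yields exactly \eqref{eq:back}. For $-4<\hat d<0$ the analogous computation with $\arctanh$ is single-valued and immediate, producing the stated unique formula. For $\hat d=0$ I split into two subcases: if $(\hat\eta,\hat\tau,\hat\lambda)\neq 0$, then $d<0$ and $d=(2k\pi)^2$ are both incompatible with the prescribed nonzero data (the former forces $\hat d<0$; the latter forces $(\hat\eta,\hat\tau,\hat\lambda)=0$ through the vanishing prefactor), leaving only $d=0$ with $(\eta,\tau,\lambda)=(\hat\eta,\hat\tau,\hat\lambda)$; whereas if $(\hat\eta,\hat\tau,\hat\lambda)=0$, then both the trivial choice $(\eta,\tau,\lambda)=0$ and any $(\eta,\tau,\lambda)\in\R^3$ with $d=(2k\pi)^2$, $k\in\N$, solve the system, since then the prefactor vanishes and kills the right-hand side of \eqref{eq:going.d>0}.

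The main obstacle I anticipate is the careful bookkeeping in the $\hat d>0$ branch: the multi-valuedness of $\arctan$ produces the family indexed by $k\in\Z$ in \eqref{eq:back}, reflecting a genuine periodicity of the matrix exponential along the one-parameter group generated by $i(\sigma\cdot\mathbf n)B_{\eta,\tau,\lambda}$; and some care is needed to verify that the two sign choices in $\tan(\sqrt d/2)=\pm\sqrt{\hat d}/2$ are absorbed into the same family through the reparametrisation $k\leftrightarrow -k$ rather than doubling it. A secondary subtlety is the inclusion of the sporadic solutions $d=(2k\pi)^2$ in the degenerate case $\hat\eta=\hat\tau=\hat\lambda=0$, which enter the picture through the zero of the prefactor rather than from the input data themselves.
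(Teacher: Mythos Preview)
Your proposal is correct and follows essentially the same route as the paper: reduce to Lemma~\ref{lem:exp.isnB}, read off $\hat d$ as a function of $d$, and invert branch by branch. Your treatment is in fact slightly more careful than the paper's in the $\hat d>0$ case, where you explicitly track both sign choices $\tan(\sqrt d/2)=\pm\sqrt{\hat d}/2$ and verify they merge into the single $k\in\Z$ family; the paper writes only the positive root and passes directly to \eqref{eq:back}.
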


\begin{proof} 
  If $\hat{d} \geq 0$ then, by \eqref{eq:going.d>0}, we have $d \geq 0$
  and $\hat{d} = 4 \tan^2 (\sqrt{d}/2)$, that
  gives
  \begin{equation}\label{eq:d.back>0}
    \tan \frac{\sqrt{d}}{2} = \frac{\sqrt{\hat{d}}}{2},
    \quad
    \frac{\sqrt{d}}{2}= \arctan \tfrac{\sqrt{\hat{d}}}{2} +
    k\pi, \quad \text{ for } k \in \Z.
  \end{equation}
  If $\sqrt{d}/2 = k_0\pi$, for some $k_0 \in \N$, then
  \eqref{eq:going.d>0} is true if and only if
  $\hat{\eta}=\hat{\tau}=\hat{\lambda}=0$.
  If $\sqrt{d}/2 = 0$, then \eqref{eq:going.d>0} gives
  $(\hat{\eta},\hat{\tau},\hat{\lambda})
  = (\eta,\tau,\lambda)$.
  If $\sqrt{d}/2 \neq k\pi$, for all $k \in \N_0$, we can
	divide by $\tan{\sqrt{d}/2}$ in
  \eqref{eq:going.d>0}. Using \eqref{eq:d.back>0}, this yields
  \eqref{eq:back}.
  The proof of the case $-4<\hat{d}<0$ is analogous and even simpler, so it
  will be omitted. 
\end{proof}

\section{Magnetic field}
\label{app:magnetic_field}

The term $\lambda(\sigma\cdot\mathbf{t})\delta_\Sigma$ in  \eqref{eq:Dirac.shells} corresponds to the singular vector potential $\mathbf{A}_\Sigma:= \lambda (t_1 \delta_\Sigma,t_2 \delta_\Sigma)$ supported on $\Sigma$. Note that this is just a formal expression--in fact, this term is reflected in the transmission condition across $\Sigma$. We will introduce the magnetic field by the formula $B_\Sigma=\partial_1 A_2-\partial_2 A_1=\lambda(\partial_1 (t_2 \delta_\Sigma)-\partial_2 (t_1 \delta_\Sigma))$, \ie exactly in the same manner as in a regular case. Here $t_i\delta_\Sigma$ is the simple layer  and the derivatives are understood in the sense of distributions. Since $(n_1,n_2)=(t_2 , -t_1)$, we obtain
\begin{multline*}
  \langle B_\Sigma, \varphi \rangle_{\mathcal{D}'(\R^2),
    \mathcal{D}(\R^2)}=-\lambda\Big(\langle  n_1\delta_\Sigma, \partial_1\varphi \rangle_{\mathcal{D}'(\R^2),\mathcal{D}(\R^2)}+\langle  n_2\delta_\Sigma, \partial_2\varphi \rangle_{\mathcal{D}'(\R^2),\mathcal{D}(\R^2)}\Big)\\
    =-\lambda \int_\Sigma \mathbf{n} \cdot \nabla \varphi \,d\sigma
  =\langle \lambda \partial_{\mathbf{n}}\delta_\Sigma, \varphi \rangle_{\mathcal{D}'(\R^2),\mathcal{D}(\R^2)},
\end{multline*}
for all $\varphi \in \mathcal{D}(\R^2)$,
where $\partial_{\mathbf{n}}\delta_\Sigma$ stands for the double layer distribution, \cf \cite{vladimirov}.

Alternatively, thanks to the divergence theorem, we may write
\begin{equation*}
  \langle B_\Sigma, \varphi \rangle_{\mathcal{D}'(\R^2),
    \mathcal{D}(\R^2)}
  = -\lambda
  \int_\Omega \Delta \varphi \,dx
  =
  \langle -\lambda \Delta(\chi_\Omega), \varphi \rangle_{\mathcal{D}'(\R^2),
    \mathcal{D}(\R^2)}.
\end{equation*}

If, for $\epsilon\in(0,\beta)$, we define the vector potential
\begin{equation}\label{eq:defn.Aepsilon2D}
  \mathbf{A}_\epsilon : \R^2 \to \R^2, \quad
  \mathbf{A}_\epsilon(x):=
  \begin{cases}
    \lambda
    h_\epsilon(p)
    \mathbf{t}(x_\Sigma)
    \quad & \text{ for }x = x_\Sigma + p\mathbf{n}(x_\Sigma)
    \in \Sigma_\epsilon,
    \\
    0 \quad & \text{ for }x \in \R^2\setminus \Sigma_\epsilon.
  \end{cases}
\end{equation}
then the corresponding magnetic field $B_\epsilon$ reads
\begin{equation}\label{eq:defn.Bepsilon2D}
  B_\epsilon(x) =\partial_1 \mathbf{A}_{\epsilon,2}(x) -\partial_2
  \mathbf{A}_{\epsilon,1}(x)=
 \begin{cases}
  \frac{\lambda h_\epsilon(p)\kappa(s)}{1+p\kappa(s)}+\lambda h_\epsilon'(p)
  \quad &\text{ for all }x = \gamma(s) + p\mathbf{n}_\gamma(s) \in
  \Sigma_\epsilon, \\
  0 \quad &\text{ for }x \in \R^2 \setminus \Sigma_\epsilon,
\end{cases}
  \end{equation}
and we have:
\begin{proposition}
  Let ${A}_\epsilon$, ${B}_\epsilon$  be defined as in
  \eqref{eq:defn.Aepsilon2D}, \eqref{eq:defn.Bepsilon2D}.
  Then
  \begin{enumerate}[label=$({\roman*})$]
  \item\label{item:limit.A} $\mathbf{A}_\epsilon \xrightarrow[\epsilon \to 0]{} \lambda \mathbf{t}
    \delta_\Sigma=\mathbf{A}_\Sigma$ in the sense of distributions,
  \item\label{item:limit.B} $ B_\epsilon \xrightarrow[\epsilon \to 0]{}
     \lambda \partial_\mathbf{n} \delta_\Sigma=B_\Sigma$ 
    in the sense of distributions.
  \end{enumerate}
\end{proposition}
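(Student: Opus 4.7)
The two claims will be handled in sequence: first \ref{item:limit.A}, then \ref{item:limit.B} as a corollary via continuity of the distributional curl.

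For \ref{item:limit.A}, I will fix $\varphi=(\varphi_1,\varphi_2)\in C_c^\infty(\R^2;\R^2)$ and pair it with $\mathbf{A}_\epsilon$. Since for $0<\epsilon<\beta$ the neighbourhood $\Sigma_\epsilon$ is parametrised by $\mathscr L_\gamma$, I will apply the change of variables formula \eqref{eq:change.of.variables} to get
\begin{equation*}
\langle\mathbf{A}_\epsilon,\varphi\rangle
= \int_0^\ell\!\!\int_{-\epsilon}^{\epsilon} \lambda(\gamma(s))\,h_\epsilon(p)\,\mathbf{t}_\gamma(s)\cdot\varphi\big(\gamma(s)+p\mathbf{n}_\gamma(s)\big)\,(1+p\kappa_\gamma(s))\,dp\,ds.
\end{equation*}
For each fixed $s$ the map $p\mapsto \mathbf{t}_\gamma(s)\cdot\varphi(\gamma(s)+p\mathbf{n}_\gamma(s))(1+p\kappa_\gamma(s))$ is continuous and takes the value $\mathbf{t}_\gamma(s)\cdot\varphi(\gamma(s))$ at $p=0$. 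Since $h_\epsilon\to\delta_0$ in $\mathcal D'(\R)$ and the family is supported in $[-\epsilon,\epsilon]$, standard mollifier/dominated convergence arguments (uniform in $s$) let me pass to the limit:
\begin{equation*}
\langle\mathbf{A}_\epsilon,\varphi\rangle\xrightarrow[\epsilon\to 0]{}\int_0^\ell \lambda(\gamma(s))\,\mathbf{t}_\gamma(s)\cdot\varphi(\gamma(s))\,ds=\langle \lambda\mathbf{t}\,\delta_\Sigma,\varphi\rangle,
\end{equation*}
which is exactly the claim.

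For \ref{item:limit.B}, rather than computing with the explicit formula \eqref{eq:defn.Bepsilon2D} (which would require carefully interpreting $h_\epsilon'$), I will exploit that partial differentiation is sequentially continuous on $\mathcal D'(\R^2)$. From the pointwise identity $B_\epsilon=\partial_1 A_{\epsilon,2}-\partial_2 A_{\epsilon,1}$, valid as a distributional relation because $\mathbf{A}_\epsilon\in L^\infty(\R^2;\R^2)$, together with \ref{item:limit.A}, I immediately obtain
\begin{equation*}
B_\epsilon\xrightarrow[\epsilon\to 0]{} \partial_1(\lambda t_2\,\delta_\Sigma)-\partial_2(\lambda t_1\,\delta_\Sigma)\quad\text{in }\mathcal D'(\R^2).
\end{equation*}
It then only remains to identify this limit as $\lambda\partial_{\mathbf n}\delta_\Sigma$. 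Pairing with $\varphi\in C_c^\infty(\R^2)$, using $n_1=t_2$, $n_2=-t_1$, and integrating by parts in $\mathcal D'$, I get
\begin{equation*}
\begin{split}
\langle \partial_1(\lambda t_2\delta_\Sigma)-\partial_2(\lambda t_1\delta_\Sigma),\varphi\rangle
&=-\int_\Sigma \lambda (t_2\partial_1\varphi-t_1\partial_2\varphi)\,ds\\
&=-\int_\Sigma \lambda\,\mathbf n\cdot\nabla\varphi\,ds=\langle \lambda\,\partial_{\mathbf n}\delta_\Sigma,\varphi\rangle,
\end{split}
\end{equation*}
by the very definition of the double layer distribution with density $\lambda$ recalled just before the proposition. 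This yields \ref{item:limit.B}.

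No step here is genuinely hard; the only place that requires attention is making sure the change of variables and the mollifier limit in \ref{item:limit.A} are justified uniformly in $s$, and that the identification of $\partial_1(\lambda t_2\delta_\Sigma)-\partial_2(\lambda t_1\delta_\Sigma)$ with $\lambda\partial_{\mathbf n}\delta_\Sigma$ is correctly read off (in particular, the orientation convention $\mathbf n=(t_2,-t_1)$ used throughout the paper is what produces the correct sign).
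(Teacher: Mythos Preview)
Your proposal is correct and follows essentially the same route as the paper: for \ref{item:limit.A} the paper also passes to tubular coordinates via \eqref{eq:change.of.variables} and then lets $\epsilon\to0$ (there via the explicit substitution $q=p/\epsilon$, which is just a concrete way of carrying out your mollifier/dominated convergence step); for \ref{item:limit.B} the paper likewise invokes continuity of distributional differentiation, the identification of the limit with $\lambda\,\partial_{\mathbf n}\delta_\Sigma$ having already been established in the paragraph preceding the proposition.
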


\begin{proof}
    Let $\gamma$ be an arc-lenght parametrization of $\Sigma$, as in \Cref{sec:tangent.normal}.
  In order to prove \ref{item:limit.A}, let $\varphi \in
  \mathcal{D}(\R^2)$.
  Thanks to \eqref{eq:change.of.variables},
  \begin{equation*}
    \begin{split}
      &\lim_{\epsilon \to 0} \int_{\R^2}
      A_\epsilon(x)\varphi(x)\,dx
      =
      \lim_{\epsilon \to 0} \int_{\Sigma_\epsilon} A_\epsilon(x)\varphi(x)\,dx
      \\
      & =
      \lim_{\epsilon \to 0} \int_0^{\ell}\int_{-\epsilon}^{\epsilon}
      \lambda
      h_\epsilon(p)
      \mathbf{t}_\gamma(s)
     \varphi(\gamma(s) +
        p\mathbf{n}_\gamma(s))
      (1 + p \kappa_\gamma(s))\,dpds
      \\
      & =
      \lim_{\epsilon \to 0} \int_0^{\ell}\int_{-1}^{1}
      \lambda
      h (q)
      \mathbf{t}_\gamma(s)
      \varphi(\gamma(s) +
        \epsilon q \mathbf{n}_\gamma(s))
      (1 + \epsilon q \kappa_\gamma(s))\,dqds
      \\
      & =
      \int_0^{\ell}
      \lambda \mathbf{t}_\gamma(s)
      \varphi(\gamma(s))
      ds
      = \int_\Sigma  \lambda\mathbf{t}(x_\Sigma) \varphi(x_\Sigma) \,dx_\Sigma
      =
      \langle \lambda \mathbf{t}\delta_\Sigma, \varphi
      \rangle_{\mathcal{D}'(\R^2),\mathcal{D}(\R^2)}.
    \end{split}
  \end{equation*}

The second assertion follows from (i) combined with the continuity of distributional derivatives with respect to the convergence on $\mathcal{D}'(\R^2)$.
\end{proof}

Note that the two-dimensional Dirac operator with the magnetic field associated with the vector potential $\mathbf{A}_\epsilon$ is just $\ccE_{0,0,\lambda;\epsilon}$. By Theorem~\ref{thm:approximation}, $\ccE_{0,0,\lambda;\epsilon}$ converges to $\ccD_{0,0,\hat\lambda}$ in the strong resolvent sense, where $\hat\lambda$ is always (except for the trivial case $\lambda=0$) different from $\lambda$. On the other hand, we have just shown that  the formal limit of $\ccE_{0,0,\lambda;\epsilon}$ is $\ccD_{0,0,\lambda}$.


\end{document}